\newtheorem{theorem}{Theorem}[section]
\newtheorem{cor}[theorem]{Corollary}
\newtheorem{lemma}[theorem]{Lemma}
\newtheorem{prop}[theorem]{Proposition}
\theoremstyle{definition}
\newtheorem{define}[theorem]{Definition}
\newtheorem{ex}[theorem]{Example}
\newtheorem{remark}[theorem]{Remark}
\newcommand{\ind}{\mathrm{ind}}
\newcommand{\field}[1]{\mathbb{#1}}
\newcommand{\zkz}{\field{Z}/k\field{Z}}
\newcommand{\C}{\field{C}}
\newcommand{\R}{\field{R}}
\newcommand{\Z}{\mathbbm{Z}}
\newcommand{\rz}{\field{R}/\field{Z}}
\begin{document}
\title[Realizing the analytic surgery group geometrically: Part I]{Realizing the analytic surgery group of Higson and Roe geometrically,\\ 
Part I: The geometric model}
\author{Robin J. Deeley, Magnus Goffeng}
\date{\today}
\begin{abstract}
We construct a geometric analog of the analytic surgery group of Higson and Roe for the assembly mapping for free actions of a group with values in a Banach algebra completion of the group algebra. We prove that the geometrically defined group, in analogy with the analytic surgery group, fits into a six term exact sequence with the assembly mapping and also discuss mappings with domain the geometric group. In particular, given two finite dimensional unitary representations of the same rank, we define a map in the spirit of $\eta$-type invariants from the geometric group (with respect to assembly for the full group $C^*$-algebra) to the real numbers. 
\end{abstract}

\maketitle

\section*{Introduction}
\noindent 
Higson and Roe begin their paper \cite{HReta} with the following statement:
 \vspace{0.20cm} \\
``Our aim is to place certain rigidity theorems for the relative eta-invariant into the context of Baum's geometric $K$-homology theory."
 \vspace{0.20cm}
\\
In the current paper, our goal is to take this idea to its natural conclusion; namely, we propose a definition of a geometric version of the analytic surgery group of Higson and Roe. Using this group, we obtain rigidity results for relative $\eta$-type invariants using only the framework of geometric cycles (rather than the combination of the geometric and analytic K-homology used in \cite{HReta}). In future work, we will link these purely geometrically defined invariants with classical and higher relative $\eta$-invariants.
\par
By a geometric analog, we mean a Baum-Douglas type construction, cf. \cite{BD,BD2}. The link between the analytic surgery group and classical surgery is the topic of the series of papers \cite{HRSur1, HRSur2, HRSur3}. The analytic surgery group is an invariant associated with a finitely generated discrete group $\Gamma$ and a choice $C^*_\epsilon(\Gamma)$ of a $C^*$-completion of the group ring $\C[\Gamma]$. In various geometric and analytic applications, the analytic surgery group can often be used as a domain for secondary invariants of the assembly mapping
$$\mu_\epsilon:K_*(B\Gamma)\to K_*(C^*_\epsilon(\Gamma))$$
as defined at the level of geometric cycles in (for example) \cite{Wal}. This assembly mapping coincides with the Baum-Connes assembly mapping whenever $\Gamma$ is torsion-free. In the setup of the Baum-Connes conjecture, the reduced completion $\epsilon={\bf red}$ and the full completion $\epsilon={\bf full}$ are the most well studied. Other completions have also been studied in \cite{baumguwill}. The veracity of the Baum-Connes conjecture for torsion-free discrete groups $\Gamma$ implies the vanishing of a large number of higher secondary invariants of free $\Gamma$-actions (see \cite{HReta, Kas, PS}); in most cases, the relevant completion is the full completion.

The motivation to carry out the construction of such a geometric analog comes from the philosophy behind Baum-Douglas models; by replacing analytic classes by fewer cycles consisting of geometric data, the cycles can be treated by geometric methods. The analytic surgery group of a discrete finitely generated group $\mathcal{S}^{an}_*(\Gamma)$ was in \cite{HR, HRSur3, HReta, Roe} constructed by means of coarse geometry as the $K$-theory of the $C^*$-algebra generated by $\Gamma$-invariant pseudolocal controlled operators. Following the philosophy of the Baum-Douglas model, which in \cite{BD} treats primary invariants, we treat a geometric model for the analytic surgery group in a way fitting with the secondary invariants coming from higher index theory. A virtue of the model is that it, without any major technical complications, lends itself to a generalization for assembly mappings with values in $K$-theory of Banach algebras allowing for more flexibility when proving vanishing results of secondary invariants.

This paper is the first in a series of papers. In forthcoming papers, we will apply the geometric model constructed in this paper to vanishing results for $\eta$-invariants, higher invariants, and relate our construction with Higson-Roe's analytic model. 
\\

To begin, we will give a short review the construction in \cite{Roe} (also see \cite[Chapter 12]{HR}, \cite{HRSur3} and \cite{HReta}).  Throughout, $\Gamma$ denotes a finitely generated group. Assume that $C^*_\epsilon(\Gamma)$ is some $C^*$-algebra completion of $\C[\Gamma]$. As in \cite{HRSur3}, $(\tilde{X},X,\gamma)$, will denote a $\Gamma$-presented space.  That is, $\tilde{X}$ is a proper geodesic metric space, $\gamma$ is a free and proper action of $\Gamma$ on $\tilde{X}$ and $X=\tilde{X}/\Gamma$.  We will assume that $X$ is compact; often we will require it to be a finite CW-complex.  In fact, the reader (if so inclined) could restrict to the case where $X$ is a manifold with fundamental group $\Gamma$ and $\tilde{X}$ is the universal cover of $X$.  Since the action is free, we have that
$K^{\Gamma}_*(\tilde{X}) \cong K_*(X)$. The assembly mapping that will be of interest to us takes the following form
$$\mu_{X,\epsilon} : K_*(X) \rightarrow K_*(pt;C^*_\epsilon(\Gamma)), \; (M,E,\varphi)\mapsto (M,E\otimes_\C \varphi^*\mathcal{L}_{C^*_\epsilon(\Gamma)})$$
where $\mathcal{L}_{C^*_\epsilon(\Gamma)}\times _\Gamma \tilde{X}$ denotes the Mishchenko bundle on $X$ associated with the completion $C^*_\epsilon(\Gamma)$ and the domain and range of this map are defined using Baum-Douglas type cycles (see Definition \ref{cycDef} below).  

The assembly map can also (see \cite[Section 12.5]{HR}) be constructed as follows.  Let $C^*_{\Gamma,\epsilon}(\tilde{X})$ denote the $C^*$-algebra generated by $\Gamma$-invariant locally compact controlled operators and $D^*_{\Gamma,\epsilon}(\tilde{X})$ denote the $C^*$-algebra generated by $\Gamma$-invariant pseudolocal controlled operators; both algebras are constructed on a Hilbert space which depends on the completion $\epsilon$.  Since the former is an ideal in the latter, we have the short exact sequence
$$0 \rightarrow C^*_{\Gamma,\epsilon}(X) \rightarrow D^*_{\Gamma,\epsilon}(X) \rightarrow D^*_{\Gamma,\epsilon}(X)/C^*_{\Gamma,\epsilon}(X) \rightarrow 0 $$
Applying the K-theory functor leads to the six-term exact sequence
\begin{center}
$\begin{CD}
K_0(D^*_{\Gamma,\epsilon}(X)/C^*_{\Gamma,\epsilon}(X)) @>>> K_1(C^*_{\Gamma,\epsilon}(X)) @>>>  K_1(D^*_{\Gamma,\epsilon}(X))\\
@AAA @. @VVV \\
 K_0(D^*_{\Gamma,\epsilon}(X)) @<<<  K_0(C^*_{\Gamma,\epsilon}(X)) @<<< K_1(D^*_{\Gamma,\epsilon}(X)/C^*_{\Gamma,\epsilon}(X))
\end{CD}$
\end{center}
Moreover, one can show (see \cite[Lemmas 12.5.3 and 12.5.4]{HR}) that 
\begin{eqnarray*}
K_p(C^*_{\Gamma,\epsilon}(X)) & \cong & K_p(C^*_\epsilon(\Gamma)) \\
K_{p+1}(D^*_{\Gamma,\epsilon}(X)/C^*_{\Gamma,\epsilon}(X)) & \cong & K_p(X)
\end{eqnarray*}
and that the boundary map in the six-term exact sequence above is given by the Baum-Connes assembly map.  Summarizing, we have the six-term exact sequence
\begin{center}
$\begin{CD}
K_0(X) @>\mu_{X,\epsilon}>> K_0(C^*_\epsilon(\Gamma)) @>>>  K_1(D^*_{\Gamma,\epsilon}(X))\\
@AAA @. @VVV \\
 K_0(D^*_{\Gamma,\epsilon}(X)) @<<<  K_1(C^*_\epsilon(\Gamma)) @<\mu_{X,\epsilon}<< K_1(X) 
\end{CD}$
\end{center}
As mentioned above, $K_p(X)$ and $K_p(C^*_\epsilon(\Gamma))\cong K_p(pt;C^*_\epsilon(\Gamma))$ admit realizations in terms of geometric cycles and the map $\mu_{X,\epsilon}$ can be defined at the level of cycles.  As such, one is led to consider the possiblity that $K_p(D^*_{\Gamma,\epsilon}(X))$ can be realized using a variant of the cycles considered by Baum and Douglas. 

The main goal of this paper is the construction of such a model. As in \cite{Dee1,DeeRZ}, the geometric model for the analytic surgery group is a relative cycle theory. As such, a cycle should consist of a cycle for $K_*^{geo}(X)$ that vanishes under the assembly mapping and a precise reason for its vanishing. For example, the class of a cycle $(M,E_\C,\varphi)$ in $K_*^{geo}(X)$ vanishes under assembly if there is a spin$^c$-manifold $W$ with boundary $M$, a locally trivial, finitely generated, projective $C^*_\epsilon(\Gamma)$-bundle $\mathcal{E}_{C^*_\epsilon(\Gamma)}\to W$ and an isomorphism 
\begin{equation}
\label{alphaececstar}
\alpha:E_\C\otimes \varphi^*\mathcal{L}_{C^*_\epsilon(\Gamma)}\xrightarrow{\sim} \mathcal{E}|_M
\end{equation} 
of $C^*_\epsilon(\Gamma)$-bundles on $M$. The data $(W,(\mathcal{E}_{C^*_\epsilon(\Gamma)},E_\C,\alpha),\varphi)$ is the prototypical example of a cycle in the geometric model for the analytic surgery group. 

Our construction is at present only an analogy, since we do not construct an explicit isomorphism between our geometric group and $K_*(D^*_{\Gamma,\epsilon}(X))$. However, the geometric group fits into the required exact sequence (see Theorem \ref{sixTerExtSeq}). We do have an explicit map which we believe is an isomorphism, the proof of it being well defined uses the results of \cite{PSrhoIndSign} heavily. We will consider this question in a forthcoming paper. Despite this fact, results concerning the vanishing/homotopy invariance of $\eta$-type invariants can be obtained purely from the geometric group.
\par

\subsection*{The content of the paper}  
We start the paper with some preliminaries in Section \ref{sectionpreliminaries}. This includes recalling geometric models for $K$-homology of topological spaces with coefficients in a Banach algebra, a subject to which we found no detailed reference; the proofs of the relevant results are included in Appendix \ref{banachktheory}. We also define the geometric assembly mapping for free actions of a discrete group $\Gamma$ with values in the $K$-theory of a Banach algebra completion $\mathcal{A}(\Gamma)$ of $\C[\Gamma]$. When $\Gamma$ is torsion-free and the completion $\mathcal{A}(\Gamma)$ is unconditional, this assembly mapping coincides with the assembly mapping for proper actions constructed in Lafforgue's $KK^{ban}$-theory assuming that an embedding of $\mathcal{A}(\Gamma)$ into a $C^*$-algebra completion of $\C[\Gamma]$ induces an isomorphism on $K$-theory. Furthermore, in Subsection \ref{subsectionrelativektheory} and \ref{subsectionmodelforrelativektheory}, we construct a relative $K$-theory group for the assembly mapping on $K$-theory. This is motivated by the need for a $K$-theory group to describe the vector bundle data in the geometric cycles (e.g., the data $(\mathcal{E}_{C^*_\epsilon(\Gamma)},E_\C,\alpha)$ as above in \eqref{alphaececstar}).

In Section \ref{sectiongeometriccycles}, we define the geometric cycles. These cycles are defined in a general setup depending on the following data: a locally compact Hausdorff space $X$, a unital Banach algebra $B$ and a locally trivial bundle $\mathcal{L}\to X$ of finitely generated projective $B$-modules. A cycle (with respect to this input) is $(W,\xi,f)$ where $W$ is a spin$^c$-manifold with boundary, $f:\partial W\to X$ and $\xi$ is a relative $K$-theory class for the bundle $\mathcal{L}$ analogously to \eqref{alphaececstar}. These cycles modulo the standard type of relation form a $\Z/2$-graded abelian group $\mathcal{S}_*^{geo}(X,\mathcal{L})$. The main result of this section, contained in Theorem \ref{sixTerExtSeq}, states that  $\mathcal{S}_*^{geo}(X,\mathcal{L})$ fits into a six term exact sequence with the geometric assembly mapping
$$\begin{CD}
K_0^{geo}(X) @>\mu_\mathcal{L}>> K_0^{geo}(pt;B) @>r>>  \mathcal{S}^{geo}_0(X;\mathcal{L})\\
@AA\delta A @. @VV\delta V \\
 \mathcal{S}^{geo}_1(X;\mathcal{L}) @<r<<  K_1^{geo}(pt;B) @<\mu_\mathcal{L}<< K_1^{geo}(X).
\end{CD}$$
If $\mathcal{A}(\Gamma)$ is a Banach algebra completion of $\C[\Gamma]$ and $\mathcal{L}_{\mathcal{A}(\Gamma)}$ is the Mishchenko bundle defined from $\mathcal{A}(\Gamma)$; we use the notation $\mathcal{S}^{geo}_*(\Gamma,\mathcal{A}):=\mathcal{S}^{geo}_*(B\Gamma,\mathcal{L}_{\mathcal{A}(\Gamma)})$. A direct advantage is that in the framework of geometric cycles, it is straight forward to construct an exterior product
\[K_*^{geo}(B\Gamma_1)\times \mathcal{S}^{geo}_*(\Gamma_2,\mathcal{A}'')\to \mathcal{S}^{geo}_*(\Gamma_1\times \Gamma_2,\mathcal{A}),\]
whenever $\Gamma_1$ and $\Gamma_2$ are two discrete groups and $\mathcal{A}''$ and $\mathcal{A}$ are suitable Banach algebra completions of $\Gamma_2$ respectively $\Gamma_1\times \Gamma_2$. See more in Proposition \ref{productprop}. This exterior product was constructed in the analytic framework in \cite{Seg}.

The cycles in the geometric model for analytic surgery are of a very special form allowing for the construction of various geometric invariants. We consider the construction of such invariants in Section  \ref{subsectionhigherrho}. The general set up is somewhat involved, but a particular case is the following one. Here we work with the full group $C^*$-algebra and suppose that we are given two finite dimensional unitary representations of the same rank. These two representations give a cocycle in $K^1(B\Gamma;\rz)$ and from each a map from $K_*(C^*_{{\bf full}}(\Gamma))$ to $K_*(\field{C})$.
In section \ref{subsectionhigherrho}, we construct a mapping $\mathcal{S}^{geo}_*(\Gamma,C^*_{{\bf full}})\to \mathbb R$ that fits into a commuting diagram with exact rows:
\begin{center}
\scriptsize
$\begin{CD}
 K_0^{geo}(B\Gamma) @>\mu_{{\bf full}}>> K_0^{geo}(pt;C^*_{{\bf full}}(\Gamma))  @>>> \mathcal{S}^{geo}_0(\Gamma;C^*_{{\bf full}}) @>>> K_1^{geo}(B\Gamma) @>\mu_{{\bf full}}>> K_1^{geo}(pt;C^*_{{\bf full}}(\Gamma))   \\
 @VVV  @VVV @VVV @VVV @VVV  \\
  0 @>>> \field{Z} @>>> \mathbb R @>>> \rz @>>> 0
\end{CD}$
\normalsize
\end{center} 
Here, the mapping on $K_*^{geo}(B\Gamma)$ is defined by pairing with the cocycle in $K^1(B\Gamma;\rz)$ associated with the two representations and the map on $ K_*(C^*_{{\bf full}}(\Gamma))$ is given by the difference of the maps induced from the representations. Again, the reader should note the analogy with work of Higson and Roe in \cite{HReta}. However, in Section \ref{subsectionhigherrho}, we construct a higher version of these mappings which relates the six-term exact sequence assoicated with the assembly map with the Bockstein sequence in $K$-theory associated to the short exact sequence of groups $ 0 \rightarrow \field{Z}\rightarrow \mathbb R \rightarrow \rz \rightarrow 0$.

\section{Preliminaries}
\label{sectionpreliminaries}

\subsection{Baum-Connes mappings}
\label{section2.a}

A central object of study in this paper is the $K$-theory of Banach algebras associated with discrete finitely generated groups $\Gamma$. As we will need to work with fairly general Banach algebras, we will make a notational distinction between general Banach algebras and Banach algebra closures of $\C[\Gamma]$, $B$ for the former and $\mathcal{A}(\Gamma)$ for the latter. For the convenience of the reader we recall the notion of a geometric cycle.

\begin{define}[cf. \cite{BD}]
\label{geometriccyclesbacoeff}
Assume that $X$ is a locally compact Hausdorff space and $B$ is a unital Banach algebra. A geometric cycle with coefficients in $B$ is a triple $(M,\mathcal{E}_B,\varphi)$ where 
\begin{enumerate}
\item $M$ is a compact smooth spin$^c$-manifold.
\item $\mathcal{E}_B\to M$ is a smooth locally trivial bundle of finitely generated projective $B$-bundle.
\item $\varphi:M\to X$ is a continuous mapping.
\end{enumerate}
The set of isomorphism classes of geometric cycles with coefficients in $B$ is $\Z/2\Z$-graded by the parity of the dimension of $M$ and is equipped with an equivalence relation generated by disjoint union/direct sum, bordism and vector bundle modification. The set of equivalence classes forms a $\Z/2\Z$-graded abelian group denoted by $K_*^{geo}(X;B)$. The group $K_*^{geo}(X;B)$ depends covariantly on $X$ and covariantly on $B$ under homomorphisms of Banach algebras. If $B=\C$, we simply write $K_*^{geo}(X):=K_*^{geo}(X;\C)$. If $X$ is non-compact we note that 
$$K_*^{geo}(X;B)=\varinjlim_{X'\subseteq X\,\mbox{compact}} K_*^{geo}(X';B).$$
\end{define}

The smoothness assumption on the bundle $\mathcal{E}_B\to M$ can be lifted after using a similar argument as in the proof of Theorem $3.14$ of \cite{Sch}.

\begin{remark}
\label{anassem}
The definition above of $K_*^{geo}(X)$ coincides with the usual definition. We prove in the appendix that $K_*^{geo}(-;B)$ forms a generalized homology theory. It is unclear to which extent there exists a natural mapping 
$$K_*^{geo}(X;B)\to KK^{ban}(C_0(X),B),$$
defined analytically. Here $KK^{ban}$ denotes Lafforgue's $KK$-theory for Banach algebras. We provide an abstract condition on $B$ in Appendix \ref{banachktheory} for this mapping to exist and produce an isomorphism for all finite $CW$-complexes $X$. 

In general, there exists a topological index mapping defining an isomorphism $\ind_B^t:K_*^{geo}(pt;B)\xrightarrow{\sim} K_*(B)$ as follows. After replacing $B$ with $C(S^1,B)$ and using Bott periodicity, it suffices to consider $*=0$. If $(M,\mathcal{E}_B)$ is a cycle for $K_0^{geo}(pt;B)$, we choose a spin$^c$-embedding $i:M\hookrightarrow \R^{2N}$, for some large enough $N$; the existence of such an embedding is guaranteed by Lemma $2.1$ of \cite{BOOSW}. We let $\beta_N:K_0(C_0(\R^{2N};B))\to K_0(B)$ denote the Bott periodicity mapping. The mapping $(M,\mathcal{E}_B)\mapsto \beta_N i_![\mathcal{E}_B]$ produces the sought after isomorphism (for more details see Lemma \ref{analyticassemblyc}). 
\end{remark}

\begin{define}[Assembly along $B$-bundles]
Assume that $X$ is a locally compact Hausdorff space, $B$ is a Banach algebra and $\mathcal{L}\to X$ is a locally trivial bundle of projective finitely generated $B$-bundles. We define $\mu_\mathcal{L}:K_*^{geo}(X)\to K_*^{geo}(pt;B)$ by 
$$\mu_{\mathcal{L}}:K_*^{geo}(X)\to K_*^{geo}(pt;B), \;(M,E_\C,\varphi)\mapsto (M,E_\C\otimes \varphi^*\mathcal{L}).$$
\end{define}

The example of assembly which is absolutely central to this paper comes from discrete groups. Associated with a discrete finitely generated group $\Gamma$ there is a classifying space for free actions $B\Gamma$, which can be chosen to be a $CW$-complex. The universal property of $B\Gamma$ states that there is a one-to-one correspondence between free $\Gamma$-actions on a topological space $\tilde{X}$ such that $\tilde{X}/\Gamma=X$ and homotopy classes of mappings $u:X\to B\Gamma$. Whenever $\mathcal{A}(\Gamma)$ is a Banach algebra closure of $\C[\Gamma]$ such that the $\Gamma$-action on $\C[\Gamma]$ extends to a continuous operation on $\mathcal{A}(\Gamma)$ we can construct a Mishchenko bundle 
\[\mathcal{L}_{\mathcal{A}(\Gamma)}:=E\Gamma\times _\Gamma \mathcal{A}(\Gamma)\to B\Gamma.\]
If $u:X\to B\Gamma$ is continuous and $X$ is a smooth manifold, $u^*\mathcal{L}_{\mathcal{A}(\Gamma)}$ is a flat bundle of $\mathcal{A}(\Gamma)$-modules. To shorten notation, we define $\mu_{\mathcal{A}(\Gamma)}:=\mu_{\mathcal{L}_{\mathcal{A}(\Gamma)}}$.

\begin{remark}
In the case when $\Gamma$ is torsion-free, $\mu_{C^*_{\bf red}(\Gamma)}:K_*^{geo}(B\Gamma)\to K_*(C^*_{\bf red}(\Gamma))$ coincides with the Baum-Connes assembly mapping, see \cite[Corollary $2.16$]{valettepaper} or the more detailed proof in \cite{land}. Such a construction can be carried out by means of a descent construction for $X\subseteq B\Gamma$ compact, corresponding to a $\Gamma$-compact $\tilde{X}\subseteq E\Gamma$;
\begin{align*}
K_*(X)\cong& K_*^\Gamma(\tilde{X})=KK^\Gamma(C_0(\tilde{X}),\C)\\
\to &KK(C_0(\tilde{X})\rtimes_{\bf red} \Gamma,C^*_{\bf red}(\Gamma)) \to KK(\C,C^*_{\bf red}(\Gamma))=K_*(C^*_{\bf red}(\Gamma)).
\end{align*}
The isomorphism is the natural one arising from that the $\Gamma$-action on $\tilde{X}$ is free, the second mapping is the descent mapping and the third mapping is the Kasparov product with a distinguished element $[\theta_X]\in K_0(C_0(\tilde{X})\rtimes _{\bf red}\Gamma)$, see more in for instance \cite{valettebook,valettepaper}. Such a construction can in fact be carried out for any $C^*$-algebra completion $C^*_\epsilon(\Gamma)$ of $\C[\Gamma]$ and coincides with $\mu_{C^*_\epsilon(\Gamma)}$. A similar descent construction can be carried out using Lafforgue's $KK^{ban}$ for any Banach algebra $\mathcal{A}(\Gamma)$ which is a completion of $\C[\Gamma]$ in an unconditional norm, see more in \cite{laffinventiones,valettebook}. This produces an assembly mapping
$$\mu_{\mathcal{A}(\Gamma)}^{ban}:K_*(B\Gamma)\to K_*(\mathcal{A}(\Gamma)).$$
It is to the authors' knowledge not known whether $\mu_{\mathcal{A}(\Gamma)}=\mu_{\mathcal{A}(\Gamma)}^{ban}$ in general (see \cite[Introduction]{Dal}). 
\end{remark}

\begin{prop}
\label{mumuban}
Assume that $\Gamma$ is a torsion-free group, $\mathcal{A}(\Gamma)$ and $C^*_\epsilon(\Gamma)$ is an unconditional Banach algebra completion respectively a $C^*$-algebra completion of $\C[\Gamma]$ and that there is a continuous extension $j_{\mathcal{A},\epsilon}:\mathcal{A}(\Gamma)\to C^*_\epsilon(\Gamma)$ of the identity mapping on $\C[\Gamma]$ inducing an isomorphism $K_*(\mathcal{A}(\Gamma))\cong K_*(C^*_\epsilon(\Gamma))$, then $\mu_{\mathcal{A}(\Gamma)}=\mu_{\mathcal{A}(\Gamma)}^{ban}$.
\end{prop}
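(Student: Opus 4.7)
The plan is to reduce the equality $\mu_{\mathcal{A}(\Gamma)} = \mu_{\mathcal{A}(\Gamma)}^{ban}$ to the already-established compatibility between geometric and analytic assembly for $C^*$-algebra completions, using the $K$-theory isomorphism induced by $j_{\mathcal{A},\epsilon}$ to transport the question to the $C^*$-algebra level. First I would note that both mappings share the same domain (after the identification $K_*^{geo}(B\Gamma) \cong K_*(B\Gamma)$ explained in the Preliminaries), so it makes sense to compose them with $(j_{\mathcal{A},\epsilon})_*:K_*(\mathcal{A}(\Gamma)) \to K_*(C^*_\epsilon(\Gamma))$, which is an isomorphism by hypothesis. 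Thus it suffices to prove the equality of the two compositions $(j_{\mathcal{A},\epsilon})_*\circ\mu_{\mathcal{A}(\Gamma)}$ and $(j_{\mathcal{A},\epsilon})_*\circ\mu_{\mathcal{A}(\Gamma)}^{ban}$.

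For the geometric side, the composition is computed at the level of cycles: $(j_{\mathcal{A},\epsilon})_*$ acts on a cycle $(M, E_\C \otimes \varphi^*\mathcal{L}_{\mathcal{A}(\Gamma)})$ by tensoring the bundle with $C^*_\epsilon(\Gamma)$ over $\mathcal{A}(\Gamma)$, and the key identification
\[
\mathcal{L}_{\mathcal{A}(\Gamma)} \otimes_{\mathcal{A}(\Gamma)} C^*_\epsilon(\Gamma) \;\cong\; \mathcal{L}_{C^*_\epsilon(\Gamma)}
\]
of Mishchenko bundles shows that $(j_{\mathcal{A},\epsilon})_*\circ\mu_{\mathcal{A}(\Gamma)} = \mu_{C^*_\epsilon(\Gamma)}$. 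For the Lafforgue side, functoriality of the Banach $KK$-descent under the morphism $j_{\mathcal{A},\epsilon}$ gives $(j_{\mathcal{A},\epsilon})_*\circ \mu_{\mathcal{A}(\Gamma)}^{ban} = \mu_{C^*_\epsilon(\Gamma)}^{ban}$.

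The remaining step is to match the two $C^*$-algebra-valued assembly mappings. Here I would invoke two known facts: first, Lafforgue's result that for $C^*$-algebra completions the $KK^{ban}$-descent construction agrees with the ordinary Kasparov descent construction, giving $\mu_{C^*_\epsilon(\Gamma)}^{ban} = \mu_{C^*_\epsilon(\Gamma)}^{an}$; second, the identification of the analytic and the geometric assembly on $K_*(B\Gamma)$ for $C^*$-algebra coefficients in the torsion-free case, exactly as recorded in the remark preceding the proposition (and proved in the references cited there, e.g.\ \cite{valettepaper,land}). Chaining these together yields
\[
(j_{\mathcal{A},\epsilon})_*\circ \mu_{\mathcal{A}(\Gamma)}^{ban} \;=\; \mu_{C^*_\epsilon(\Gamma)}^{ban} \;=\; \mu_{C^*_\epsilon(\Gamma)}^{an} \;=\; \mu_{C^*_\epsilon(\Gamma)} \;=\; (j_{\mathcal{A},\epsilon})_*\circ \mu_{\mathcal{A}(\Gamma)},
\]
and cancelling the isomorphism $(j_{\mathcal{A},\epsilon})_*$ concludes the argument.

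The main obstacle I anticipate is bookkeeping around the comparison $\mu_{C^*_\epsilon(\Gamma)}^{ban} = \mu_{C^*_\epsilon(\Gamma)}^{an}$: one needs to be careful that Lafforgue's $KK^{ban}$, when restricted to $C^*$-algebras on one side, really reproduces the Kasparov descent used to define $\mu_{C^*_\epsilon(\Gamma)}^{an}$, and that the distinguished class $[\theta_X]$ realizing the assembly element is the same in both pictures. Once this naturality is spelled out, the rest of the argument is purely formal.
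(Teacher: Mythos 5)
Your proposal takes essentially the same approach as the paper's proof: push both assembly maps forward along $j_{\mathcal{A},\epsilon}$, identify both compositions with $\mu_{C^*_\epsilon(\Gamma)}$ (using $(j_{\mathcal{A},\epsilon})_*\mathcal{L}_{\mathcal{A}(\Gamma)}=\mathcal{L}_\epsilon$ on the geometric side and functoriality of the $KK^{ban}$-descent plus the $C^*$-level comparison on the Lafforgue side), and then cancel the $K$-theory isomorphism. The paper compresses this into a single displayed equation, while you usefully make explicit the intermediate step $\mu_{C^*_\epsilon(\Gamma)}^{ban}=\mu_{C^*_\epsilon(\Gamma)}^{an}=\mu_{C^*_\epsilon(\Gamma)}$ which the paper leaves implicit via the remark preceding the proposition.
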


\begin{proof}
Since $j_{\mathcal{A},\epsilon}$ extends the identity mapping on $\C[\Gamma]$, $(j_{\mathcal{A},\epsilon})_*\mathcal{L}_\mathcal{A}=\mathcal{L}_\epsilon$. Thus, the Proposition follows from functoriality as
$$K_*(j_{\mathcal{A},\epsilon})\circ \mu_{\mathcal{A}(\Gamma)}=\mu_{C^*_\epsilon(\Gamma)}= K_*(j_{\mathcal{A},\epsilon})\circ \mu_{\mathcal{A}(\Gamma)}^{ban}$$
\end{proof}

Let us end this subsection with some further remarks about the assembly mapping for torsion-free groups. The Baum-Connes conjecture states that $\mu_{C^*_{\bf red}(\Gamma)}$ is an isomorphism, this is a well studied problem to which there to date are no known counterexamples. Motivated by this we say that the Banach algebra completion $\mathcal{A}(\Gamma)$ has the Baum-Connes property if $\mu_{\mathcal{A}(\Gamma)}$ is an isomorphism. If $C^*_\epsilon(\Gamma)$ is a $C^*$-completion of $\C[\Gamma]$ we say that $\Gamma$ has the $\epsilon$-Baum-Connes property if $\mu_{C^*_\epsilon(\Gamma)}$ is an isomorphism. 

The reader should note that the reason for us to consider rather general Banach algebra completions of $\C[\Gamma]$ is the lack of functoriality of the reduced $C^*$-algebra completion $C^*_{\bf red}(\Gamma)$ that appears in the Baum-Connes conjecture. If $\Gamma$ is amenable, or more generally $K$-amenable, the surjection $C^*_{\bf full}(\Gamma)\to C^*_{\bf red}(\Gamma)$, from the universal $C^*$-algebra completion $C^*_{\bf full}(\Gamma)$, induces an isomorphism $K_*(C^*_{\bf full}(\Gamma))\cong K_*(C^*_{\bf red}(\Gamma))$. Since $C^*_{\bf full}(\Gamma)$ is functorial, it renders the usage of Banach algebras for $K$-amenable groups unnecessary. In general, a natural candidate for a Banach algebra is $\ell^1(\Gamma)$ which has good functoriality properties. The Bost conjecture is the natural analog of the Baum-Connes conjecture in the case of $\ell^1(\Gamma)$; that is, it conjectures that the $\ell^1(\Gamma)$-assembly map is an isomorphism. There are no known counterexamples to the Bost conjecture to date. Note that if $\Gamma$ satisfies the Bost conjecture and has the $\epsilon$-Baum-Connes property for some $\epsilon$ then $K_*(j_{\ell^1,\epsilon})$ is an isomorphism and $\mu_{\ell^1}=\mu_{\ell^1}^{ban}$ by Proposition \ref{mumuban}.

As remarked above, the unconditional completions of $\C[\Gamma]$ (see more in \cite{valettebook}) plays a special role as the assembly mapping can be defined on the level of $KK^{ban}$ also for coefficients. The simplest example of an unconditional Banach algebra completion of $\C[\Gamma]$ is $\ell^1(\Gamma)$. Other examples include the Sobolev space $H^s_L(\Gamma)$, for $s$ large enough, with respect to a length function $L$ on a discrete group $\Gamma$ with property $(RD)$.

\subsection{Relative K-theory considerations}
\label{subsectionrelativektheory}

The setup we will use for relative $K$-theory groups is a compact Hausdorff space $X$, a unital Banach algebra $B$ and a $B$-bundle $\mathcal{L}\to X$, i.e., a locally trivial bundle of finitely generated projective right $B$-modules. The motivation for considering Banach algebras comes from Subsection \ref{section2.a} as the example of importance later in this paper is $X= B\Gamma$ and $\mathcal{L}:=E\Gamma\times_\Gamma B$ where $B$ is $C^*_{\bf full}(\Gamma)$, $C^*_{\bf red}(\Gamma)$, $\ell^1(\Gamma)$ or more generally any Banach algebra completion of $\C[\Gamma]$. The construction here is based on results in \cite{Kar} (also see \cite{DeeRZ}). For a topological space $Y$ and a Banach algebra $D$ we let $K^*(Y;D)$ denote the $K$-theory of the Banach algebra $C_0(Y,D)$.

Let $Z$ be a compact Hausdorff space and $Q \subseteq Z$ a closed subspace. Assume further that there is a fixed continuous map $g:Q\rightarrow X$.  We will consider two natural KK-theory elements:  The first is given by the natural $*$-homomorphism, $C(Z) \rightarrow C(Q)$; the second is given by the map $\mu_{g,\mathcal{L}}:K^0(Q) \rightarrow K^0(Q;B)$ given by $F \mapsto F\otimes g^*(\mathcal{L})$.  From these elements, we obtain a map on K-theory given by 
\begin{align*}
\sigma: K^0(Z;B)\oplus K^0(Q) &\rightarrow K^0(Q;B),\\
(\mathcal{E}_B,F_\field{C})&\mapsto [\mathcal{E}_B|_Q]-[F_\field{C}\otimes g^*\mathcal{L}].
\end{align*}

\begin{remark}
At this point we note that if $A$ and $B$ are $C^*$-algebras an analogous construction produces a mapping $K^0(Z;B)\oplus K^0(Q;A) \rightarrow K^0(Q;B)$ if $\mathcal{L}$ is an $A-B$-Hilbert bimodule bundle over $X$. Our geometric model as well as the geometric surgery sequence exists also in this setting. 
\end{remark}

\begin{define}
Assume that $B$ is a $C^*$-algebra. Let $\tilde{\sigma}$ be a $*$-homomorphism in the Cuntz picture of KK-theory whose KK-theory class gives the map $\sigma$.  Denote $K^*(C_{\tilde{\sigma}})$ by $K^*(Z,Q;\mu_\mathcal{L})$ where $C_{\tilde{\sigma}}$ is the mapping cone of $\tilde{\sigma}$.
\end{define}

By construction, we have the following exact sequence:
\begin{equation}
\label{KthExtSeq}
\begin{CD}
K^0(Z,Q;\mu_\mathcal{L}) @>>> K^0(Z;B)\oplus K^0(Q) @>\sigma>>  K^0(Q;B) \\
@AAA @. @VVV \\
 K^1(Q;B) @<\sigma<<  K^1(Q) \oplus K^1(Z;B) @<<< K^1(Z,Q;\mu_\mathcal{L}) 
\end{CD}
\end{equation}
Given a class $[\xi] \in K^0(Z,Q;\mu_\mathcal{L})$, let $[\xi_B]$ (respectively $[\xi_{\field{C}}]$) denote its image under the map $K^0(Z,Q;\mu_\mathcal{L}) \rightarrow K^0(Z;B))$ (respectively $K^0(Z,Q;\mu_\mathcal{L}) \rightarrow K^0(Q)$). It is clear from this exact sequence that $K^*(Z,Q;\mu_\mathcal{L})$, up to isomorphism, does not depend on the choice of $KK$-element $\tilde{\sigma}$.

\subsection{A model for $K^*(Z,Q;\mu_\mathcal{L})$}
\label{subsectionmodelforrelativektheory}

We will now turn to an explicit model for $K^0(Z,Q;\mu_\mathcal{L})$ for a general Banach algebra $B$. The analogous model for $K^1(Z,Q;\mu_\mathcal{L})$ can be found by, for instance, using a suspension argument.

\begin{define}
\label{relativektheorycycle}
A $\mu_\mathcal{L}$-relative $K$-theory cocycle over $(Z,Q,g)$ is a quintuple $\xi=(\mathcal{E}_B,\mathcal{E}'_B,E_\C,E'_\C,\alpha)$ where
\begin{enumerate}
\item $\mathcal{E}_B$ and $\mathcal{E}_B'$ are $B$-bundles on $Z$.
\item $E_\C$ and $E'_\C$ are vector bundles on $Q$.
\item $\alpha$ is an isomorphism of $B$-bundles 
$$\alpha:\mathcal{E}_B|_Q\oplus (E_\C' \otimes_\C g^*\mathcal{L})\xrightarrow{\sim} \mathcal{E}_B'|_Q  \oplus(E_\field{C}\otimes_\field{C} g^*\mathcal{L})$$
\end{enumerate}
An isomorphism of two $\mu_\mathcal{L}$-relative $K$-theory cocycles $\xi_1=(\mathcal{E}_B,\mathcal{E}'_B,E_\C,E'_\C,\alpha)$ and $\xi_2=(\mathcal{F}_B,\mathcal{F}'_B,F_\C,F'_\C,\beta)$ is a quadruple $\Phi=(\Phi_B,\Phi_B',\Phi_\C,\Phi_\C')$ of isomorphisms over $Z$ respectively $Q$
$$\Phi_B:\mathcal{E}_B\to \mathcal{F}_B,\;\Phi_B':\mathcal{E}_B'\to \mathcal{F}_B',\;\Phi_\C:E_\field{C}\to F_\field{C}\mbox{   and    } \Phi_\C':E_\field{C}'\to F_\field{C}'$$ 
such that the following diagram commutes:
\begin{center}
$\begin{CD}
\mathcal{E}_B|_Q\oplus (E_\C' \otimes_\C g^*\mathcal{L}) @>\alpha>>  \mathcal{E}_B'|_Q\oplus (E_\field{C}\otimes_\field{C} g^*\mathcal{L})   \\
 @V\Phi_B|_Q\oplus (\Phi_\C'\otimes \mathrm{id}_{g^*\mathcal{L}})  VV  @VV\Phi_B' \oplus (\Phi_\C\otimes \mathrm{id}_{g^*\mathcal{L}})  V  \\
 \mathcal{F}_B|_Q\oplus (F_\C' \otimes_\C g^*\mathcal{L}) @>\beta>>\mathcal{F}_B'|_Q\oplus (F_\field{C}\otimes_\field{C} g^*\mathcal{L})
\end{CD}$
\end{center}
If $Z$ is a manifold, and $Q$ a sub manifold, we say that $\xi$ is smooth if all the data it contains is smooth. A $\mu_\mathcal{L}$-relative $K$-theory cocycle $\xi_0=(\mathcal{E}_B,0,E_\C,0,\alpha)$ is called an \emph{easy} cocycle. 
We define the opposite cocycle of $\xi=(\mathcal{E}_B,\mathcal{E}'_B,E_\C,E'_\C,\alpha)$ as
$$\xi^{op}:=(\mathcal{E}_B',\mathcal{E}_B,E'_\C,E_\C,\alpha^{-1}).$$
\end{define}

\begin{remark}
The complicated condition $(3)$ in Definition \ref{relativektheorycycle} is due to the fact that $g^*\mathcal{L}$ need not extend to all of $Z$. It is needed to obtain all cocycles in relative $K$-theory groups. Condition $(3)$ is, after possibly stabilizing the bundles in their respective fashion, equivalent to the following relation in $K^0(Q;B)$
$$[\mathcal{E}_B|_Q]-[\mathcal{E}'_B|_Q]=[E_\field{C}\otimes_\field{C}g^*\mathcal{L}]-[E_\C'\otimes_\field{C}g^*\mathcal{L}].$$ 
It suffices to consider easy cocycles if $\mathcal{L}$ is trivial (for example, see the geometric models considered in \cite{DeeRZ}).
\end{remark}

The set of isomorphism classes of $\mu_\mathcal{L}$-relative $K$-theory cocycles over $(Z,Q,g)$ can be equipped with a semigroup operation:
\begin{align*}
(\mathcal{E}_B,\mathcal{E}'_B,E_\C,E'_\C,\alpha)+&(\mathcal{F}_B,\mathcal{F}'_B,F_\C,F'_\C,\beta):=\\
&(\mathcal{E}_B\oplus\mathcal{F}_B,\mathcal{E}'_B\oplus \mathcal{F}'_B,E_\C\oplus F_\C,E'_\C\oplus F_\C',\alpha\oplus \beta).
\end{align*}
It is clear that the operation induces an abelian semigroup structure on the set of isomorphism classes of $\mu_\mathcal{L}$-relative $K$-theory cocycles. 

\begin{define}[Degenerate cocycles]
\label{defofdeg}
We say that a $\mu_\mathcal{L}$-relative $K$-theory cocycle $\xi$ over $(Z,Q,g)$ is degenerate if it is a direct sum of $\mu_\mathcal{L}$-relative $K$-theory cocycles isomorphic to one of the following two forms
\begin{enumerate}
\item $(\mathcal{E}_B,\mathcal{E}'_B,E_\C,E_\C', \alpha)$ where 
$$\alpha=\alpha_0\oplus \alpha_1:\mathcal{E}_B|_Q\oplus (E_\C' \otimes_\C g^*\mathcal{L})\xrightarrow{\sim} \mathcal{E}_B'|_Q\oplus(E_\field{C}\otimes_\field{C} g^*\mathcal{L}) $$ 
and $\alpha_0$ extends to an isomorphism $\mathcal{E}_B\xrightarrow{\sim} \mathcal{E}_B'$ on $Z$ and $\alpha_1=\alpha_1'\otimes \mathrm{id}_{g^*\mathcal{L}}$ for an isomorphism $\alpha_1':E_\C'\to E_\C$.
\item $\xi_0\oplus \xi^{op}_0$ for a $\mu_\mathcal{L}$-relative $K$-theory cocycle $\xi_0$.
\end{enumerate}
\end{define}

It holds by construction that the set of degenerate cocycles is closed under direct sum.

\begin{prop}
\label{degeneratestabilization}
If $\xi$ is a $\mu_\mathcal{L}$-relative $K$-theory cocycle, there is a degenerate cocycle $\xi'$ such that $\xi\oplus \xi'$�is isomorphic to a cocycle of the form $(\mathcal{E}_B,Z\times B^n,E_\C,Q\times \C^k, \alpha)$ for some $n,k$.
\end{prop}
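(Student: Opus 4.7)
The plan is to stabilize the two ``primed'' bundles $\mathcal{E}_B'$ and $E_\C'$ into trivial bundles by adding two carefully chosen degenerate cocycles of type (1) from Definition \ref{defofdeg}. The core ingredient is the classical fact that if $Y$ is compact Hausdorff, $D$ is a unital Banach algebra, and $\mathcal{F}\to Y$ is a locally trivial bundle of finitely generated projective right $D$-modules, then there exists a complementary bundle $\mathcal{F}^\perp\to Y$ together with an isomorphism $\mathcal{F}\oplus \mathcal{F}^\perp\cong Y\times D^m$ for some $m$. We will apply this once over $Z$ with $D=B$ and once over $Q$ with $D=\C$.

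First I would choose a $B$-bundle $\mathcal{G}_B\to Z$ with $\mathcal{E}_B'\oplus \mathcal{G}_B\cong Z\times B^n$ for some $n$. Consider the cocycle
\[
\eta_1:=(\mathcal{G}_B,\mathcal{G}_B,0,0,\mathrm{id}_{\mathcal{G}_B|_Q}).
\]
This is degenerate of type (1): the isomorphism $\alpha_0=\mathrm{id}_{\mathcal{G}_B|_Q}$ trivially extends to $\mathrm{id}_{\mathcal{G}_B}$ on all of $Z$, and there is no $\C$-bundle contribution. Forming $\xi\oplus \eta_1$ gives a cocycle of the form
\[
(\mathcal{E}_B\oplus\mathcal{G}_B,\mathcal{E}_B'\oplus\mathcal{G}_B,E_\C,E_\C',\alpha\oplus\mathrm{id}),
\]
and applying the chosen trivialization $\mathcal{E}_B'\oplus\mathcal{G}_B\cong Z\times B^n$ (as a cocycle isomorphism with identity in the other three components) yields an isomorphic cocycle whose second entry is $Z\times B^n$.

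Next, pick a vector bundle $G_\C\to Q$ with $E_\C'\oplus G_\C\cong Q\times \C^k$ for some $k$, and consider
\[
\eta_2:=(0,0,G_\C,G_\C,\mathrm{id}_{G_\C\otimes g^*\mathcal{L}}).
\]
Again $\eta_2$ is degenerate of type (1): taking $\alpha_0$ as the (trivial) identity on $0$, which extends over $Z$, and $\alpha_1'=\mathrm{id}_{G_\C}$. Adding $\eta_2$ to the cocycle produced in the previous paragraph and applying the trivialization $E_\C'\oplus G_\C\cong Q\times \C^k$ (as an isomorphism of cocycles involving the identity on all other components) produces the sought after cocycle
\[
(\mathcal{E}_B\oplus\mathcal{G}_B,\,Z\times B^n,\,E_\C\oplus G_\C,\,Q\times \C^k,\,\beta),
\]
where $\beta$ is determined from $\alpha$ and the chosen trivializations.

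The main thing to verify is that, at each stabilization step, the quadruple $\Phi=(\Phi_B,\Phi_B',\Phi_\C,\Phi_\C')$ given by the identity on three entries and the chosen trivialization on the fourth actually intertwines the old and new gluing data in the sense required by Definition \ref{relativektheorycycle}. This commutativity follows directly from the fact that the direct summand decompositions we use are block diagonal and the trivializations are applied after restricting to $Q$; there is no compatibility obstruction because the added summands are paired with themselves via the identity. I do not expect any serious difficulty beyond this bookkeeping, since both stabilization facts over $Z$ and over $Q$ are standard consequences of compactness together with finite generation and projectivity, and since $\eta_1$ and $\eta_2$ were constructed precisely to fit the type (1) template in Definition \ref{defofdeg}.
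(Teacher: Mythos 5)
Your proof is correct and follows essentially the same approach as the paper, whose proof is the single sentence that it ``follows from the fact that any $B$-bundle and vector bundle admits a complementary bundle.'' You have simply unfolded that sentence into explicit degenerate cocycles $\eta_1$ and $\eta_2$ of type (1) and verified the isomorphism condition, which is exactly the intended argument.
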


\begin{proof}
Follows from the fact that any $B$-bundle and vector bundle admits a complementary bundle.
\end{proof}

\begin{remark}
\label{complicatedremark}
There is an equivalent approach to the complicated condition $(3)$ of Definition \ref{relativektheorycycle}. We assume that $X$ is compact allowing us to choose a complementary bundle $\mathcal{L}_\perp\to X$ to $\mathcal{L}$ in a trivial bundle $X\times B^N$ and fix an isomorphism $\gamma:\mathcal{L}\oplus \mathcal{L}_\perp\xrightarrow{\sim} X\times B^N$. Using Proposition \ref{degeneratestabilization}, the $\mu_\mathcal{L}$-relative $K$-theory cocycles stand in one-to-one correspondence, up to stabilization by degenerate cocycles, with triples $(\mathcal{E}_B,E_\field{C},\tilde{\alpha})$ satisfying the conditions of Definition \ref{relativektheorycycle} but with the complicated condition $(3)$ replaced by that 
\begin{enumerate}
\item[(3')] $\tilde{\alpha}$ is an isomorphism of $B$-bundles 
$$\tilde{\alpha}:\mathcal{E}_B|_Q\oplus Q\times B^m \xrightarrow{\sim} (E_\field{C}\otimes_\field{C} g^*\mathcal{L})\oplus g^*\mathcal{L}^k_\perp \oplus Q\times B^n$$ 
for some $m,n,k\in \mathbbm{N}$.
\end{enumerate}
\end{remark}

For two isomorphic $B$-bundles $\mathcal{F},\mathcal{F}'\to Q$, we let $\mathrm{Iso}_B(\mathcal{F},\mathcal{F}')\to Q$ the locally trivial bundle of isomorphisms.

\begin{define}
We say that two $\mu_\mathcal{L}$-relative $K$-theory cocycles $\xi=(\mathcal{E}_B,\mathcal{E}'_B,E_\C,E_\C', \alpha)$ and $\xi'=(\mathcal{E}_B,\mathcal{E}'_B,E_\C,E_\C', \alpha')$ are homotopic if there is an 
$$\tilde{\alpha}\in C(Q\times [0,1], \pi^*\mathrm{Iso}_B(\mathcal{E}_B|_Q\oplus E_\C'\otimes g^*\mathcal{L},\mathcal{E}_B'|_Q\oplus E_\C\otimes g^*\mathcal{L})),$$ 
where $\pi:Q\times [0,1]\to Q$ denotes the projection, such that $\tilde{\alpha}|_{Q\times \{0\}}=\alpha$ and $\tilde{\alpha}|_{Q\times \{1\}}=\alpha'$. 
\end{define}

There is an obvious extension of the notion of homotopy to isomorphism classes of two $\mu_\mathcal{L}$-relative $K$-theory cocycles. The following characterization of homotopy of $\mu_\mathcal{L}$-relative $K$-theory cocycles follows from standard approximation arguments.

\begin{prop}
\label{homotopyprop}
Two isomorphism classes of $\mu_\mathcal{L}$-relative $K$-theory cocycles $\xi$ and $\xi'$ are homotopic if and only if there exists a $\mu_\mathcal{L}$-relative $K$-theory cocycle $\tilde{\xi}$ for $(Z\times [0,1],Q\times [0,1], g\circ \pi)$ such that 
$$\tilde{\xi}|_{(Z\times \{0\},Q\times \{0\}, g)}\cong \xi\quad\mbox{and}\quad \tilde{\xi}|_{(Z\times \{1\},Q\times \{1\}, g)}\cong \xi'.$$
If $Z$ is a manifold, and $Q$ a sub manifold, and $\xi$ and $\xi'$ are smooth we can in this case take $\tilde{\xi}$ to be smooth.
\end{prop}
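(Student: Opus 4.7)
The plan is to prove both implications by exploiting the homotopy invariance of bundles over $Z\times [0,1]$ and $Q\times [0,1]$, together with standard smoothing techniques for the final smoothness claim.

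For the ``only if'' direction, suppose $\xi=(\mathcal{E}_B,\mathcal{E}_B',E_\C,E_\C',\alpha)$ and $\xi'=(\mathcal{E}_B,\mathcal{E}_B',E_\C,E_\C',\alpha')$ are homotopic via some $\tilde{\alpha}$. First I would pull back all bundles via the projection $\pi: Z\times [0,1]\to Z$, noting that $(\pi^*(g^*\mathcal{L}))|_{Q\times [0,1]} = (g\circ \pi)^*\mathcal{L}$. The quintuple
\[\tilde{\xi}:=(\pi^*\mathcal{E}_B,\pi^*\mathcal{E}_B',\pi^*E_\C,\pi^*E_\C',\tilde{\alpha})\]
is then a $\mu_\mathcal{L}$-relative cocycle over $(Z\times [0,1],Q\times [0,1],g\circ \pi)$ whose restrictions at $t=0,1$ are tautologically $\xi$ and $\xi'$.

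For the ``if'' direction, suppose $\tilde{\xi}=(\mathcal{F}_B,\mathcal{F}_B',F_\C,F_\C',\beta)$ is such a cocycle with $\tilde{\xi}|_{t=0}\cong \xi$ and $\tilde{\xi}|_{t=1}\cong \xi'$. The key input is the homotopy invariance of locally trivial bundles over a paracompact base of the form $Z\times [0,1]$, which holds equally well for finitely generated projective $B$-module bundles (one proves it via a clutching/parallel-transport argument using a partition of unity along $[0,1]$). This produces isomorphisms $\Phi_B:\mathcal{F}_B\xrightarrow{\sim}\pi^*(\mathcal{F}_B|_{t=0})$ restricting to the identity at $t=0$, and analogous isomorphisms $\Phi_B'$, $\Phi_\C$, $\Phi_\C'$. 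Composing with the chosen isomorphisms from $\tilde{\xi}|_{t=0}$ to $\xi$, I may replace $\tilde{\xi}$ by an isomorphic cocycle whose bundles are $\pi^*\mathcal{E}_B,\pi^*\mathcal{E}_B',\pi^*E_\C,\pi^*E_\C'$ and whose bundle isomorphism is now a continuous path $\tilde{\alpha}$ in $\mathrm{Iso}_B(\mathcal{E}_B|_Q\oplus E_\C'\otimes g^*\mathcal{L},\mathcal{E}_B'|_Q\oplus E_\C\otimes g^*\mathcal{L})$ over $Q\times[0,1]$, starting at $\alpha$. Its endpoint $\tilde{\alpha}|_{t=1}$ defines a cocycle isomorphic to $\xi'$ with the same bundles as $\xi$, which is exactly the data required by the extension of homotopy to isomorphism classes.

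For the smooth refinement, when $Z$ is a manifold and $Q$ a submanifold, the bundles on $Z\times[0,1]$ produced above can be taken smooth because the trivializing isomorphisms $\Phi_B,\ldots,\Phi_\C'$ can be constructed smoothly using a smooth partition of unity along $[0,1]$ and parallel transport with respect to a smooth connection. The remaining isomorphism $\tilde{\alpha}$ is then a continuous section of a smooth bundle of isomorphisms over $Q\times[0,1]$ with prescribed smooth boundary values at $t=0,1$; a standard smoothing argument (mollification relative to the boundary, using that the space of isomorphisms is open inside the bundle of homomorphisms) produces a smooth section with the same boundary values, yielding the required smooth $\tilde{\xi}$.

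The main technical obstacle is the homotopy invariance statement for $B$-module bundles used in the second paragraph; this is not difficult but must be handled with some care since $B$ is only a Banach algebra, so one cannot quote the vector bundle proof verbatim. The argument nevertheless goes through by writing $[0,1]$ as a union of subintervals on which $\mathcal{F}_B$ is trivializable, and inductively gluing via the appropriate $GL_n(B)$-valued transition functions.
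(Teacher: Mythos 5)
Your proof is correct and fills in the details that the paper leaves to ``standard approximation arguments.'' Both directions proceed exactly as the paper intends: the forward direction is the tautological pullback along $\pi$, the reverse direction rests on homotopy invariance of locally trivial $B$-module bundles over $Z\times[0,1]$ (which, as you rightly flag, needs the standard clutching argument adapted to the Banach-algebra setting — here $Z$ is compact, the automorphism group of a finitely generated projective $B$-module is open in the Banach algebra of endomorphisms, so the usual parallel-transport/trivialization argument applies verbatim), and the smoothness claim is a mollification argument relative to the boundary. No gaps.
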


\begin{define}
Let $Gr(Z,Q;\mu_\mathcal{L})$ denote the Grothendieck group of the semigroup of isomorphism classes of $\mu_\mathcal{L}$-relative $K$-theory cocycles over $(Z,Q,g)$. We define $Gr_0(Z,Q;\mu_\mathcal{L})\subseteq Gr(Z,Q;\mu_\mathcal{L})$ as the subgroup generated by the set of isomorphism classes of degenerate $\mu_\mathcal{L}$-relative $K$-theory cocycles. For a cocycle $\xi$ we will denote its class in $Gr(Z,Q;\mu_\mathcal{L})$ by $[\xi]_0$. We define $K^0(Z,Q;\mu_\mathcal{L})$ as the set of equivalence classes of elements from $Gr(Z,Q;\mu_\mathcal{L})$ under the relation generated by homotopy equivalence and degenerate equivalence;
$$\xi\sim_{deg} \xi'\Leftrightarrow\exists \xi_0\in Gr_0(Z,Q;\mu_\mathcal{L}): \; [\xi]_0+ [\xi_0]_0=[\xi']_0+ [\xi_0]_0$$
We denote the class of a cocycle $\xi$ in $K^0(Z,Q;\mu_\mathcal{L})$ by $[\xi]$.
\end{define}

\begin{prop}
\label{standardrep}
Any class $z\in Gr(Z,Q;\mu_\mathcal{L})$ can be written as 
\begin{equation}
\label{standardeq}
z=[\xi]_0-[\xi']_0,
\end{equation}
where $\xi'$ is degenerate.
\end{prop}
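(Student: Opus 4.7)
The plan is to exploit the opposite-cocycle construction from Definition \ref{relativektheorycycle} together with the fact that direct sums of the form $\xi_0 \oplus \xi_0^{op}$ are degenerate by Definition \ref{defofdeg}(2).

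An arbitrary element $z \in Gr(Z,Q;\mu_\mathcal{L})$ is, by definition of the Grothendieck group, a formal difference $z = [\xi_1]_0 - [\xi_2]_0$ for some $\mu_\mathcal{L}$-relative $K$-theory cocycles $\xi_1$ and $\xi_2$. The goal is to replace the subtracted term by a degenerate cocycle without altering $z$. The natural candidate is to absorb $\xi_2^{op}$ into both sides: formally adding and subtracting $[\xi_2^{op}]_0$ gives
\[
z = \bigl([\xi_1]_0 + [\xi_2^{op}]_0\bigr) - \bigl([\xi_2]_0 + [\xi_2^{op}]_0\bigr) = [\xi_1 \oplus \xi_2^{op}]_0 - [\xi_2 \oplus \xi_2^{op}]_0,
\]
using that the semigroup operation on isomorphism classes of cocycles descends to addition in the Grothendieck group.

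It then remains to observe that $\xi_2 \oplus \xi_2^{op}$ is degenerate: this is immediate from Definition \ref{defofdeg}(2), which declares any cocycle of the form $\xi_0 \oplus \xi_0^{op}$ to be degenerate. Setting $\xi := \xi_1 \oplus \xi_2^{op}$ and $\xi' := \xi_2 \oplus \xi_2^{op}$ therefore yields the desired representation \eqref{standardeq}.

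There is no real obstacle here; the only thing to verify is the bookkeeping that $[\xi_2]_0 + [\xi_2^{op}]_0$ equals $[\xi_2 \oplus \xi_2^{op}]_0$ in $Gr(Z,Q;\mu_\mathcal{L})$, which follows directly from the definition of the semigroup operation on isomorphism classes of cocycles introduced just before Definition \ref{defofdeg}. The argument is thus a three-line manipulation whose substance is entirely contained in having chosen the opposite cocycle and the class of degenerate cocycles appropriately in the preceding definitions.
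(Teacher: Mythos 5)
Your proof is correct and uses essentially the same idea as the paper, namely that $\xi_2\oplus\xi_2^{op}$ is degenerate by Definition \ref{defofdeg}(2). The paper organizes the argument slightly differently (it shows that the set of classes of the desired form contains all $[\xi]_0$, is closed under sum, and contains $-[\xi]_0 = [\xi^{op}]_0 - [\xi\oplus\xi^{op}]_0$, hence is all of $Gr(Z,Q;\mu_\mathcal{L})$), whereas you start from the standard Grothendieck-group representation $z=[\xi_1]_0-[\xi_2]_0$ and absorb $\xi_2^{op}$ directly; your version is a little more streamlined but the substance is identical.
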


\begin{proof}
The set of classes of the form \eqref{standardeq} contains all classes of the form $[\xi]_0$ for any $\mu_\mathcal{L}$-relative $K$-theory cocycle $\xi$. The set of such classes is closed under sum. Hence it suffices to prove that the inverse of a $\mu_\mathcal{L}$-relative $K$-theory cocycle is of the form \eqref{standardeq}. This is clear as for any cocycle $\xi$, $\xi\oplus \xi^{op}$ is degenerate and $-[\xi]_0=[\xi^{op}]_0-[\xi\oplus \xi^{op}]_0$.

\end{proof}

\begin{remark}
\label{stabilizationremark}
A consequence of Proposition \ref{standardrep} is that two $\mu_\mathcal{L}$-relative $K$-theory cocycles $\xi_1$ and $\xi_2$ equals each other in $Gr(Z,Q;\mu_\mathcal{L})$ if and only if there is a degenerate cocycle $\xi'$ such that
$$\xi_1+\xi'\cong \xi_2+\xi'.$$
\end{remark}

We are now ready to relate the group $K^0(Z,Q;\mu_\mathcal{L})$ to the other $K$-theory groups. Recall the notation $GL_\infty(B)=\varinjlimÊGL_N(B)$ for any unital Banach algebra $B$ and $K_1(B):=\pi_0(GL_\infty(B))$. We define $\delta_0:GL_\infty(C(Q,B))\to Gr(Z,Q;\mu_\mathcal{L})$ by representing  $x\in GL_\infty(C(Q,B))$ by a bundle isomorphism $u:Q\times B^n\to Q\times B^n$ and setting 
$$\delta_0(x):=[Z\times B^n,Z\times B^n,0,0,u]_0.$$
It is clear that $\delta_0$ maps homotopic elements to homotopic cocycles, hence we can define $\delta:K^1(Q;B)\to K^0(Z,Q;\mu_\mathcal{L})$ as the mapping induced from $\delta_0$. The mapping $\rho:K^0(Z,Q;\mu_\mathcal{L})\to K^0(Z;B)\oplus K^0(Q)$ is defined by 
\begin{equation}
\label{rhoformula}
\rho(\mathcal{E}_B,\mathcal{E}_B',E_\field{C},E_\C',\alpha):=([\mathcal{E}_B]-[\mathcal{E}_B'])\oplus([E_\field{C}]-[E_\field{C}']).
\end{equation}
A priori, the formula \eqref{rhoformula} defines a mapping $Gr(Z,Q;\mu_\mathcal{L})\to K^0(Z;B)\oplus K^0(Q)$ that respects the homotopy equivalence as well as degenerate equivalence, which induces the mapping $\rho$.

\begin{lemma}
\label{relativekcycles}
The sequence
\begin{align}
\label{relativekcyclesexact}
K^1(Z;B)\oplus K^1(Q)\xrightarrow{\sigma}K^1(Q;B)\xrightarrow{\delta}&K^0(Z,Q;\mu_\mathcal{L})\xrightarrow{\rho} \\
\nonumber
\xrightarrow{\rho} \,&K^0(Z;B)\oplus K^0(Q)\xrightarrow{\sigma}K^0(Q;B)
\end{align}
is exact.
\end{lemma}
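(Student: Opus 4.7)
The plan is to verify exactness at each of the three spots by first checking that each composition vanishes and then establishing the reverse inclusion, using the explicit cocycle descriptions and the stabilization machinery of Proposition \ref{degeneratestabilization} and Remark \ref{complicatedremark}.

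First I would handle the vanishing of the compositions. That $\sigma\rho=0$ is immediate from condition (3) of Definition \ref{relativektheorycycle}: the existence of the isomorphism $\alpha$ forces $[\mathcal{E}_B|_Q]-[\mathcal{E}'_B|_Q]=[E_\C\otimes g^*\mathcal{L}]-[E'_\C\otimes g^*\mathcal{L}]$ in $K^0(Q;B)$. The identity $\rho\delta=0$ follows because $\delta_0$ outputs cocycles of the shape $(Z\times B^n,Z\times B^n,0,0,u)$, on which the formula \eqref{rhoformula} vanishes. For $\delta\sigma=0$, an element $x=[u]\in K^1(Z;B)$ gives $\delta\sigma(x,0)=\delta[u|_Q]=[(Z\times B^n,Z\times B^n,0,0,u|_Q)]$, which is degenerate of type (1) in Definition \ref{defofdeg} since $u|_Q$ extends across $Z$. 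For $y=[v]\in K^1(Q)$, using a trivialization $g^*\mathcal{L}\oplus g^*\mathcal{L}_\perp\cong Q\times B^N$ as in Remark \ref{complicatedremark}, $\sigma(0,y)$ is represented up to stabilization by $v\otimes \mathrm{id}_{g^*\mathcal{L}}\oplus \mathrm{id}$, whose image under $\delta$ is again degenerate of type (1) after absorbing the stabilizing identity summand.

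Next I would establish exactness at $K^0(Z;B)\oplus K^0(Q)$ and at $K^0(Z,Q;\mu_\mathcal{L})$. Given $(\mathcal{E}_B,F_\C)$ with $\sigma(\mathcal{E}_B,F_\C)=0$ in $K^0(Q;B)$, stabilizing by a trivial $B$-bundle produces an isomorphism $\alpha:\mathcal{E}_B|_Q\oplus Q\times B^n\xrightarrow{\sim} F_\C\otimes g^*\mathcal{L}\oplus Q\times B^n$, and the cocycle $(\mathcal{E}_B,Z\times B^n,F_\C,0,\alpha)$ then has $\rho$-image equal to $([\mathcal{E}_B],[F_\C])$. Conversely, if $\rho[\xi]=0$ for $\xi=(\mathcal{E}_B,\mathcal{E}'_B,E_\C,E'_\C,\alpha)$, then $[\mathcal{E}_B]=[\mathcal{E}'_B]$ and $[E_\C]=[E'_\C]$; adding appropriate degenerate cocycles of type (1) (which encode stable isomorphisms on the two sides) allows one to assume $\mathcal{E}_B=\mathcal{E}'_B$ and $E_\C=E'_\C$ with $\alpha$ a genuine automorphism. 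Trivializing via a complement bundle as in Remark \ref{complicatedremark} turns $\alpha$ into an element of $GL_\infty(C(Q,B))$, yielding a class $[u]\in K^1(Q;B)$ with $\delta[u]=[\xi]$.

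Finally I would tackle exactness at $K^1(Q;B)$, which I expect to be the main obstacle. Suppose $\delta[u]=0$ for $u\in GL_n(C(Q,B))$, so by Remark \ref{stabilizationremark} there exists a degenerate cocycle $\xi'$ such that $(Z\times B^n,Z\times B^n,0,0,u)\oplus \xi'$ becomes homotopic to $\xi'$, using Proposition \ref{homotopyprop} to interpret the homotopy. Summands of type (2) in the degenerate form contribute nothing to the $K^1$-class, so after cancellation the data reduces to a type-(1) block whose global isomorphism $\alpha=\alpha_0\oplus\alpha_1$ must, up to homotopy, realize $u$ as a direct sum of $\alpha_0|_Q$ (extending to an automorphism $\tilde{\alpha_0}$ over all of $Z$, hence giving a class $[\tilde{\alpha_0}]\in K^1(Z;B)$) and $\alpha_1'\otimes\mathrm{id}_{g^*\mathcal{L}}$ (giving a class in $K^1(Q)$ via the complement trick). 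This exhibits $[u]$ in the image of $\sigma$. The genuinely delicate point is bookkeeping the various stabilizations, keeping homotopies and isomorphisms compatible across the complement-bundle presentation, and ensuring that the decomposition of $u$ into the two constituent $K^1$-pieces really respects the equivalence relations. The overall architecture parallels the standard relative-$K$-theory long exact sequence from \cite{Kar} and the concrete geometric variant of \cite{DeeRZ}, and can alternatively be organized via the mapping cone description recalled in Subsection \ref{subsectionrelativektheory}.
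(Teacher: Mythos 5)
Your overall architecture — verify the compositions vanish, then use Proposition \ref{degeneratestabilization}, the complement-bundle trick of Remark \ref{complicatedremark}, and the structure of degenerate cocycles to get the reverse inclusions — is the same as the paper's, and the treatments of exactness at $K^0(Z,Q;\mu_\mathcal{L})$ and at $K^1(Q;B)$ match the paper's proof in all essentials.

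There is, however, a genuine gap at exactness at $K^0(Z;B)\oplus K^0(Q)$, and it is exactly the point the paper flags in the remark immediately after this lemma. You only treat pairs $(\mathcal{E}_B,F_\C)$ of \emph{honest} bundle classes, and the cocycle you build, $(\mathcal{E}_B,Z\times B^n,F_\C,0,\alpha)$, has vanishing fourth slot $E'_\C$. Its $\rho$-image in the $K^0(Q)$-summand is then $[F_\C]$, a class represented by an actual bundle. But a general element $y\in K^0(Q)$ is a formal difference $[F_\C]-[Q\times\C^k]$ with $k>0$, and such a $y$ is not in the image of your construction. (The $K^0(Z;B)$-summand is handled, since the negative part $-[Z\times B^n]$ of $x$ can be absorbed into the second slot $\mathcal{E}'_B$, but the negative part of $y$ cannot be absorbed with $E'_\C=0$.) The paper handles this by taking $E'_\C=Q\times\C^k$: from $x|_Q=y\otimes g^*\mathcal{L}$ one obtains, after stabilization, an isomorphism
\[
\alpha:(\mathcal{E}_B\oplus Z\times B^{n'})|_Q\oplus g^*\mathcal{L}^k\xrightarrow{\sim} F_\C\otimes g^*\mathcal{L}\oplus Q\times B^{n+n'},
\]
and the cocycle $(\mathcal{E}_B\oplus Z\times B^{n'},Z\times B^{n+n'},F_\C,Q\times\C^k,\alpha)$ has $\rho$-image exactly $x\oplus y$. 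This is precisely what the paper means when it says the proof of exactness here ``requires cocycles that are not necessarily easy.'' Secondarily, even for your restricted case there is a bookkeeping slip: the isomorphism you write, $\mathcal{E}_B|_Q\oplus Q\times B^n\to F_\C\otimes g^*\mathcal{L}\oplus Q\times B^n$, does not have the shape demanded by condition $(3)$ for the cocycle $(\mathcal{E}_B,Z\times B^n,F_\C,0,\alpha)$; the first slot should be $\mathcal{E}_B\oplus Z\times B^n$. Once both points are fixed, your argument aligns with the paper's.
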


\begin{proof}
It is straight-forward to verify that the sequence is a complex. We will prove exactness from the end to beginning in the sequence. Assume that $x\oplus y\in K^0(Z;B)\oplus K^0(Q)$ is such that $x|Q=y\otimes g^*\mathcal{L}$ in $K^0(Q;B)$. We can write $x=[\mathcal{E}_B]-[Z\times B^n]$ and $y=[F_\field{C}]-[Q\times \field{C}^k]$. The fact that $x|Q=y\otimes g^*\mathcal{L}$ means that there exists an $n'\in \mathbbm{N}$ and an isomorphism
$$\alpha:(\mathcal{E}_B\oplus Z\times B^{n'})|_Q\oplus g^*\mathcal{L}^k\xrightarrow{\sim} F_\field{C}\otimes _\field{C}g^*\mathcal{L}\oplus B^{n+n'}.$$
It follows that $(\mathcal{E}_B\oplus Z\times B^n,Z\times B^{n+n'},F_\field{C},Q\times \C^k,\alpha)$ is a $\mu_\mathcal{L}$-relative $K$-theory cocycle and
$$\rho[\mathcal{E}_B\oplus Z\times B^n,Z\times B^{n+n'},F_\field{C},Q\times \C^k,\alpha]=x\oplus y.$$
So the sequence is exact at $K^0(Z;B)\oplus K^0(Q)$.

From Proposition \ref{degeneratestabilization} it follows that any $z\in K^0(Z,Q;\mu_\mathcal{L})$ can be written in the form
$$z=[\mathcal{E}_B,Z\times B^n,E_\C,Q\times \C^k,\alpha].$$
If $z\in \ker \rho$, it follows that there are $K, k'\in \mathbbm{N}$ such that $F_\field{C}\oplus Q\times \field{C}^{k'}\cong Q\times \field{C}^K$. Hence we can assume that 
$$z=[\mathcal{E}_B,Z\times B^n,Q\times \C^k,Q\times \C^k,\alpha].$$
Again, while $z\in \ker \rho$ there are $N,n'\in \mathbbm{N}$ and a unitary $u:\mathcal{E}_B|_Q\oplus Q\times B^{n''}\to Q\times B^N$. After stabilization we arrive at the desired identity $\delta[u]=z$.

For any $[u]\in K^1(Q;B)$ represented by a $u:Q\times B^n\to Q\times B^n$, Remark \ref{stabilizationremark} implies that $\delta[u]=0$ if and only if for two degenerate cycles 
$$(\mathcal{F}_{B,i}\oplus \mathcal{E}_{B,i}\oplus \mathcal{E}_{B,i}',\mathcal{F}_{B,i}'\oplus \mathcal{E}_{B,i}'\oplus \mathcal{E}_{B,i},F_{\C,i}\oplus E_{\field{C},i}\oplus E_{\C,i}',F_{\C,i}'\oplus E_{\field{C},i}'\oplus E_{\C,i},\alpha_i),\quad i=1,2,$$
where $\alpha_i=\alpha_{0,i}\oplus \tilde{\alpha}_i\oplus \tilde{\alpha}_i^{-1}$ is as in Definition \ref{defofdeg}, there is a homotopy
\begin{align*}
(B^n\oplus \mathcal{F}_{B,1}\oplus \mathcal{E}_{B,1}\oplus \mathcal{E}_{B,1}'&,B^n\oplus \mathcal{F}_{B,1}'\oplus \mathcal{E}_{B,1}'\oplus \mathcal{E}_{B,1},\\
&F_{\C,1}\oplus E_{\field{C},1}\oplus E_{\C,1}',F_{\C,1}'\oplus E_{\field{C},1}'\oplus E_{\C,1},u\oplus \alpha_1)\sim_h\\
(\mathcal{F}_{B,2}\oplus \mathcal{E}_{B,2}\oplus \mathcal{E}_{B,2}'&,\mathcal{F}_{B,2}'\oplus \mathcal{E}_{B,2}'\oplus \mathcal{E}_{B,2},\\
&F_{\C,2}\oplus E_{\field{C},2}\oplus E_{\C,2}',F_{\C,2}'\oplus E_{\field{C},2}'\oplus E_{\C,2},\alpha_2).
\end{align*}
By assumption, we can write $\alpha_{0,i}=\alpha_{0,i}^0|_Q\oplus \alpha_{0,i}'\otimes g^*\mathcal{L}$ for an isomorphism $\alpha_{0,i}^0$ of $B$-bundles on $Z$ and $\alpha_{0,i}'$ of vector bundles on $Q$ as in Definition \ref{defofdeg}. It follows that 
$$[u]=[\alpha_{0,2}^0]|_Q+ [\alpha_{0,2}']\otimes g^*\mathcal{L}-[\alpha_{0,1}^0]|_Q- [\alpha_{0,1}']\otimes g^*\mathcal{L}=\sigma\left([\alpha_{0,2}^0]-[\alpha_{0,1}^0], [\alpha_{0,2}']- [\alpha_{0,1}']\right).$$
\end{proof}

%

\begin{remark}
The motivation for not only considering easy cocycles is two-fold. The first reason is that our proof of exactness at $K^0(Z;B)\oplus K^0(Q)$ in Lemma \ref{relativekcycles} requires cocycles that are not necessarily easy. The second reason is that if $\mathcal{L}$ is non-trivializable, it is unclear how to define a mapping $\delta$ from $K^1(Q;B)$ into a $K$-theory group generated by easy and degenerate cocycles. 
\end{remark}

The $\mu_\mathcal{L}$-relative $K$-theory inherits a $K^*(Z)$-module structure from its components making the sequence in Lemma \ref{relativekcycles} $K^0(Z)$-linear. Let us describe this module structure on the $\mu_\mathcal{L}$-relative $K$-theory cocycles. 

\begin{prop}
The $K^0(Z)$-module structure on $K^0(Z,Q;\mu_\mathcal{L})$ is constructed as follows; if $V\to Z$ is a vector bundle and $(\mathcal{E}_B,F_\C,\alpha)$ a  $\mu_\mathcal{L}$-relative $K$-theory cocycle we define
$$(\mathcal{E}_B,\mathcal{E}_B',E_\field{C},E_\C',\alpha)\otimes V:=(\mathcal{E}_B\otimes V,\mathcal{E}_B'\otimes V,E_\field{C}\otimes V|_Q,E_\C'\otimes V|_Q,\alpha \otimes \mathrm{id}_{V|_Q}).$$
\end{prop}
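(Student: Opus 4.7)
The plan is to verify, in four short steps, that the stated formula makes sense on cocycles, descends through each of the equivalence relations used to form $K^0(Z,Q;\mu_\mathcal{L})$, and then satisfies the module axioms.

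First I would check that $(\mathcal{E}_B\otimes V,\mathcal{E}_B'\otimes V,E_\C\otimes V|_Q,E_\C'\otimes V|_Q,\alpha\otimes\mathrm{id}_{V|_Q})$ is a well-formed $\mu_\mathcal{L}$-relative $K$-theory cocycle over $(Z,Q,g)$. The first two entries are $B$-bundles on $Z$ since the fiberwise tensor product of a $B$-bundle with a (complex) vector bundle is again a $B$-bundle; the middle entries are vector bundles on $Q$. The isomorphism $\alpha\otimes\mathrm{id}_{V|_Q}$ has the correct source and target because tensoring with $V|_Q$ distributes over direct sum and commutes (up to canonical isomorphism) with $\otimes_\C g^*\mathcal{L}$, so that
\[
(\mathcal{E}_B|_Q\oplus E_\C'\otimes_\C g^*\mathcal{L})\otimes V|_Q\;\cong\;(\mathcal{E}_B\otimes V)|_Q\oplus (E_\C'\otimes V|_Q)\otimes_\C g^*\mathcal{L},
\]
and similarly on the target side.

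Next I would check that $\xi\mapsto \xi\otimes V$ respects each of the relations used in the definition of $K^0(Z,Q;\mu_\mathcal{L})$. Isomorphisms of cocycles are preserved since one simply tensors each component of $\Phi=(\Phi_B,\Phi_B',\Phi_\C,\Phi_\C')$ with $\mathrm{id}_V$ (restricted to $Q$ in the last two slots) and the required commuting square goes through by functoriality of $\otimes$. Direct sums are preserved because tensor product distributes over direct sum. Homotopy is preserved: if $\tilde{\alpha}$ is a continuous path of isomorphisms on $Q\times[0,1]$, then $\tilde{\alpha}\otimes\mathrm{id}_{\pi^*V|_Q}$ is such a path for the tensored data. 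The only genuinely delicate point is that degenerate cocycles (in the sense of Definition \ref{defofdeg}) map to degenerate cocycles: a type (1) cocycle has $\alpha=\alpha_0\oplus(\alpha_1'\otimes\mathrm{id}_{g^*\mathcal{L}})$ with $\alpha_0$ extending to $Z$, and tensoring with $V$ gives $(\alpha_0\otimes\mathrm{id}_V)\oplus(\alpha_1'\otimes\mathrm{id}_{V|_Q}\otimes\mathrm{id}_{g^*\mathcal{L}})$, where $\alpha_0\otimes\mathrm{id}_V$ manifestly extends to $Z$ and the second summand retains the required product form; a type (2) cocycle $\xi_0\oplus\xi_0^{op}$ maps to $(\xi_0\otimes V)\oplus(\xi_0\otimes V)^{op}$ because the construction $\xi\mapsto\xi^{op}$ just swaps the paired bundles and inverts $\alpha$, both of which commute with $\otimes V$. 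Consequently the operation descends to $K^0(Z,Q;\mu_\mathcal{L})$, and by universality it extends bilinearly to a pairing $K^0(Z)\otimes K^0(Z,Q;\mu_\mathcal{L})\to K^0(Z,Q;\mu_\mathcal{L})$.

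Finally, the module axioms $(V\oplus W)\otimes\xi\cong(V\otimes\xi)\oplus(W\otimes\xi)$, $(V\otimes W)\otimes\xi\cong V\otimes(W\otimes\xi)$, and $\underline{\C}\otimes\xi\cong\xi$ are immediate from the analogous identities for bundles, and compatibility with the maps $\delta$, $\rho$, $\sigma$ of Lemma \ref{relativekcycles} is a direct inspection of the formulas (e.g.\ $\rho(\xi\otimes V)=([\mathcal{E}_B\otimes V]-[\mathcal{E}_B'\otimes V])\oplus([E_\C\otimes V|_Q]-[E_\C'\otimes V|_Q])=([V]\cdot\rho(\xi))$, and similarly for $\delta$ on easy unitary cocycles). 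I expect the only point requiring real care to be the behavior on degenerate cocycles of type (1) described above; everything else is a bookkeeping exercise in the functoriality of tensor product of (bundles of) modules.
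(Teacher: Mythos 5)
Your proof is correct but takes a genuinely different route from the paper's. Where the paper argues indirectly — it observes that the maps $\sigma$ and $\delta_0$ of Lemma \ref{relativekcycles} are $K^0(Z)$-linear, notes (from the proof of exactness at $K^1(Q;B)$) that a stably degenerate cocycle has class in $\mathrm{im}\,\delta_0\circ\sigma$, and then invokes exactness of the sequence \eqref{relativekcyclesexact} to conclude that $\xi\otimes V$ is again stably degenerate — you instead verify directly, case by case from Definition \ref{defofdeg}, that a degenerate cocycle of type (1) or type (2) is carried to a degenerate cocycle of the same type, the essential observations being that $\alpha_0\otimes\mathrm{id}_V$ still extends over $Z$, that $(\alpha_1'\otimes\mathrm{id}_{g^*\mathcal{L}})\otimes\mathrm{id}_{V|_Q}$ is still a bundle map of the required product form after the canonical swap of tensor factors, and that $(\cdot)^{op}$ commutes with $\otimes V$. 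Your argument is more elementary and self-contained: it does not lean on the exactness of the sequence, and it makes the mechanism transparent. The paper's argument is shorter given the infrastructure, but it is also stated somewhat loosely (e.g.\ it says "stably trivial" where "stably degenerate" is meant, and the appeal to $\mathrm{im}\,\delta_0\circ\sigma$ silently uses the characterization of $\ker\delta$ established in the proof of Lemma \ref{relativekcycles}). Both approaches are valid; yours is arguably the cleaner of the two, and additionally records the module axioms and the compatibility with $\delta$, $\rho$, $\sigma$ which the paper asserts as "clear."
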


\begin{proof}
It is clear that this tensor product, on the level of cocycles, makes the mappings in Lemma \ref{relativekcycles} $K^0(Z)$-linear. To verify that the tensor product induces a well defined map on $K$-theory, since tensor products with vector bundles preserve homotopy of cocycles, we only need to prove that if $\xi$ is stably degenerate, so is $\xi\otimes V$. If $(\mathcal{E}_B,\mathcal{E}_B',E_\field{C},E_\C',\alpha)$ is stably degenerate, it follows from the proof of Lemma \ref{relativekcycles} that the class of $(\mathcal{E}_B,\mathcal{E}_B',E_\field{C},E_\C',\alpha)\otimes V$ in $Gr(Z,Q;\mu)$ lies in the image of $\mathrm{im} \,\delta_0\circ \sigma$. More precisely, if $(\mathcal{E}_B,\mathcal{E}_B',E_\field{C},E_\C',\alpha)$ is stably degenerate then the class of $(\mathcal{E}_B,\mathcal{E}_B',E_\field{C},E_\C',\alpha)$ lies in $\mathrm{im}\, \delta_0\circ \sigma$ and both $\sigma$ and $\delta_0$ are $K^0(Z)$-linear. The exactness of the sequence \eqref{relativekcyclesexact} implies that $(\mathcal{E}_B,\mathcal{E}_B',E_\field{C},E_\C',\alpha)\otimes V$ is stably trivial.
\end{proof}

Recall that if $\pi:V\to Z$ is an even-dimensional spin$^c$ vector bundle the Bott class $\beta_V\in K^0(V)$, see \cite[Section 2.5]{Rav}, is the generator for the $K^*(Z)$-action on $K^*(V)$. Here $K^*(Z)$ act via pullback along $\pi$. The appropriate way for us to consider elements of $K^0(V)$ is as follows. We can consider $S(V\oplus 1_\field{R})$ as a fiber wise one point compactification. This is a sphere bundle $\pi_S:S(V\oplus 1_\field{R})\to Z$. There is a natural embedding $V\hookrightarrow S(V\oplus 1_\field{R})$ with dense and open image such that $S(V\oplus 1_\field{R})\setminus V=Z$. An element of $K^0(V)$ is an element of $K^0(S(V\oplus 1_\field{R}))$ vanishing on the complement of $V$. 

Whenever $\pi:V\to Z$ is a vector bundle, the two triples $(S(V\oplus 1_\field{R}),S(V|_Q\oplus 1_\field{R}), g\circ \pi_S|_{\pi^{-1}_S(Q)})$ and $(S(V\oplus 1_\field{R})\setminus V,S(V|_Q\oplus 1_\field{R})\setminus V|_Q, g\circ \pi_S|_{\pi^{-1}_S(Q)\setminus V|_Q})$ satisfies our standing assumption. We can define 
\begin{align*}
K^*(V,V|_Q;\mu_\mathcal{L}):=&\\
\ker (K^*(S(V\oplus 1_\field{R}),S(V|_Q\oplus 1_\field{R}), \mu_\mathcal{L})&\to K^*(S(V\oplus 1_\field{R})\setminus V,S(V|_Q\oplus 1_\field{R})\setminus V|_Q,\mu_\mathcal{L})).
\end{align*}

\begin{prop}[Thom isomorphism]
Let $(Z,Q,g)$, $\pi:V\to Z$ and $\beta_V\in K^0(V)$ be as above. The mapping
\[K^*(Z,Q;\mu_\mathcal{L})\to K^*(V,V|_Q;\mu_\mathcal{L}), \; \xi\mapsto \pi^*\xi\otimes \beta_V,\]
is an isomorphism.
\end{prop}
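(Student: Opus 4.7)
The plan is to reduce to the classical Thom isomorphism in $K$-theory with Banach algebra coefficients via the five lemma applied to the exact sequence of Lemma \ref{relativekcycles}. The strategy relies on the observation that every term appearing in the six-term exact sequence associated with $(Z,Q,g)$ is an ordinary $K$-theory group (possibly with Banach algebra coefficients), except for $K^*(Z,Q;\mu_\mathcal{L})$ itself; on those ordinary $K$-theory groups, the Thom isomorphism is already available.

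More concretely, I would first write out the six-term exact sequence of Lemma \ref{relativekcycles} for $(Z,Q,g)$, and an analogous sequence for the pair $(S(V\oplus 1_\field{R}), S(V|_Q\oplus 1_\field{R}), g\circ \pi_S|_{\pi_S^{-1}(Q)})$, then pass to the subgroups defined by the support condition at infinity to obtain a six-term exact sequence involving $K^*(V,V|_Q;\mu_\mathcal{L})$ together with $K_c^*(V;B)$, $K_c^*(V|_Q)$ and $K_c^*(V|_Q;B)$ (compact support in the fiber direction). On each of the non-relative entries, $\xi\mapsto \pi^*\xi\otimes \beta_V$ is the usual Thom isomorphism for $K$-theory with Banach algebra coefficients; this is established by a standard Mayer-Vietoris argument reducing to the trivial case $V=Z\times \field{R}^{2n}$, where it amounts to Bott periodicity $K^*(Z\times \field{R}^{2n};B)\cong K^{*-2n}(Z;B)$. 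Naturality of the $K^0(Z)$-module structure shows that $\pi^*(-)\otimes\beta_V$ commutes with $\sigma$ and $\rho$, since those maps are built from restriction and ordinary tensor product of bundles. Applying the five lemma will then give the desired isomorphism on $K^*(Z,Q;\mu_\mathcal{L})$.

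The main obstacle will be verifying compatibility of $\pi^*(-)\otimes\beta_V$ with the boundary map $\delta:K^1(Q;B)\to K^0(Z,Q;\mu_\mathcal{L})$. Since $\delta$ is defined via the clutching formula $\delta_0(u)=[Z\times B^n,Z\times B^n,0,0,u]_0$ on invertibles, one has to inspect how multiplication by the spin$^c$ Bott class $\beta_V$ (viewed, as in the last paragraph preceding the statement, as a class in $K^0(S(V\oplus 1_\field{R}))$ vanishing on the section at infinity) interacts with such clutching cocycles. A clean way to do this is to realize $\beta_V$ through the symbol of a fiber wise spin$^c$ Dirac operator and then check that the induced clutching function on $V|_Q$ agrees with the pullback along $\pi$ of the clutching function for $u$, tensored with $\beta_V$, up to a homotopy through invertibles. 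This is routine but notationally involved, and once carried out the five lemma concludes the proof.
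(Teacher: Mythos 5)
Your approach is essentially the paper's: compare the six-term exact sequences for the two pairs via the five lemma, with the classical Thom isomorphism for $K^*(-;B)$ and $K^*(-)$ supplying invertibility on all non-relative entries. The one place you over-engineer is the compatibility with the boundary map $\delta$: the fiber-wise Dirac symbol argument is unnecessary, because the proposition immediately preceding the Thom statement already shows that the sequence of Lemma \ref{relativekcycles} is $K^0(Z)$-linear at the level of cocycles; applied to the pair $(S(V\oplus 1_\field{R}),S(V|_Q\oplus 1_\field{R}))$ and combined with functoriality of the sequence under $\pi_S$, this makes $\pi_S^*(\cdot)\otimes\beta_V$ automatically a chain map, and the support condition on $\beta_V$ lands it in the kernel subgroups $K^*(V,V|_Q;\mu_\mathcal{L})$, etc., so the five lemma applies as you say.
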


\begin{proof}
The mapping is well defined since 
$$\beta_V\in \ker(K^0(S(V\oplus 1_\field{R}))\to K^0(S(V\oplus 1_\field{R})\setminus V)).$$ 
The proposition follows from the fact that the sequence \eqref{KthExtSeq} is exact and the analogously defined mappings induce Thom isomorphisms $K^*(Q;B)\cong K^*(V|_Q;B)$ and $K^*(Z;B)\oplus K^*(Q)\cong K^*(V;B)\oplus K^*(V|_Q)$.
\end{proof}

\begin{prop}
\label{allexactnesseverwanted}
Let $Z$ be a compact Hausdorff space, $Q \subseteq Z$ a closed subspace and $g:Q\rightarrow X$ is a continuous map.  Then
\begin{enumerate}
\item Let $Z^{\prime}$ be another compact Hausdorff space, $Q^{\prime} \subseteq Z^{\prime}$ a closed subspace, and $g^{\prime}:Q^{\prime} \rightarrow X$ a continuous map.  If $g^{\prime}|_{Q\cap Q^{\prime}} = g|_{Q\cap Q^{\prime}}$, then the following sequence is exact:
\begin{align*}
K^0(Z\cup_{Z\cap Z^{\prime}} Z^{\prime}, Q\cup_{Q\cap Q^{\prime}} Q^{\prime};\mu_\mathcal{L})& \to\\
\to K^0(Z,Q;\mu_X)\oplus &K^0(Z^{\prime},Q^{\prime};\mu_\mathcal{L}) \to\\
&\to  K^0(Z\cap Z^{\prime},Q\cap Q^{\prime};\mu_\mathcal{L}),
\end{align*}
where the first mapping is defined by $\xi\mapsto \xi|_{(Z,Q)}\oplus \xi|_{(Z',Q')}$ and the second mapping by 
\[\xi^Z\oplus \xi^{Z'}\mapsto \xi^Z|_{(Z\cap Z^{\prime},Q\cap Q^{\prime})}- \xi^{Z'}|_{(Z\cap Z^{\prime},Q\cap Q^{\prime})}.\]
\item Let $Z^{\prime}$ be another compact Hausdorff space with $Q \subseteq Z^{\prime}$ a closed subspace, and $g^{\prime}:Z^{\prime} \rightarrow X$ a continuous map.  If $g^{\prime}|_{Q} = g$, then the following sequence is exact:
$$K^0(Z\cup_{Q} Z^{\prime}, Z^{\prime} ;\mu_\mathcal{L}) \rightarrow K^0(Z,Q;\mu_\mathcal{L})\oplus K^0(Z^{\prime}) \rightarrow K^0(Q), $$
where the first mapping is defined by $\xi\mapsto \xi|_{(Z,Q)}\oplus \xi_\C$ and the second mapping by $\xi^Z\oplus x\mapsto \xi^Z_\C-x|_Q$.
\end{enumerate}
\end{prop}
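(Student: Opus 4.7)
The plan is to reduce both parts to Mayer-Vietoris-type exactness for ordinary $K$-theory with Banach algebra coefficients. The key tool is the exact sequence of Lemma~\ref{relativekcycles}, which lets one propagate exactness of the absolute functors $K^*(-;B)$ and $K^*(-)$ to the relative group $K^*(-,-;\mu_\mathcal{L})$ via a diagram chase. Throughout, I will use the standard Mayer-Vietoris property for $K^*(-;B)$ applied to a closed cover of a compact Hausdorff space by two subspaces, which follows from the pullback description of function algebras and the excision results established in Appendix~\ref{banachktheory}.

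For part~(1), I will assemble a large commutative diagram whose rows are the exact sequences of Lemma~\ref{relativekcycles} for the triples $(Z\cup Z', Q\cup Q')$, $(Z,Q)\oplus (Z',Q')$ and $(Z\cap Z', Q\cap Q')$, and whose columns are Mayer-Vietoris sequences for $K^*(-;B)$ and $K^*(-)$ on the relevant closed covers (namely $\{Z,Z'\}$ and $\{Q,Q'\}$). A standard diagram chase then yields the claimed exactness at $K^0(Z,Q;\mu_\mathcal{L})\oplus K^0(Z',Q';\mu_\mathcal{L})$: one inclusion is tautological from functoriality of restriction, while for the reverse inclusion one chases a class whose image in the intersection vanishes sideways through the absolute Mayer-Vietoris sequence and then lifts it back via Lemma~\ref{relativekcycles}. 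Alternatively, I could give a direct cocycle argument: Proposition~\ref{degeneratestabilization} and Remark~\ref{stabilizationremark} allow one to stabilize a matching pair $(\xi^Z,\xi^{Z'})$ so that their restrictions to $(Z\cap Z', Q\cap Q')$ become isomorphic as cocycles, and then Proposition~\ref{homotopyprop} provides a collar-type homotopy of the gluing isomorphism so that all four bundles and the isomorphism of condition~(3) of Definition~\ref{relativektheorycycle} can be glued simultaneously to a cocycle on the union.

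Part~(2) proceeds by the same diagram chase, with the role of $(Z',Q')$ from part~(1) now played by the trivial pair $(Z',Z')$: the relative group $K^*(Z',Z';\mu_\mathcal{L})$ collapses by Lemma~\ref{relativekcycles} to ordinary $K$-theory of $Z'$, which accounts for the summand $K^0(Z')$ in the target. The main obstacle in either approach will be executing the simultaneous gluing required by condition~(3) of Definition~\ref{relativektheorycycle}: a relative cocycle is an interlocking package of two $B$-bundles on the ambient space, two vector bundles on the subspace, and a prescribed isomorphism between specific tensor products of them, so producing a glued cocycle over an overlap forces one to coordinate the stabilizations of all four bundles together with the homotopy of the isomorphism. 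Since every stabilization and homotopy result needed is already in Subsection~\ref{subsectionmodelforrelativektheory}, the proof reduces to careful bookkeeping of these gluings.
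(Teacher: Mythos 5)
Your proposal is correct and takes essentially the same route as the paper: the paper's proof (stated in one line) invokes exactness of the relative sequence (KthExtSeq)/Lemma~\ref{relativekcycles}, Mayer-Vietoris for $K$-theory with coefficients, and the nine lemma, which is precisely the diagram chase you describe. The alternative direct cocycle argument you sketch is not pursued by the paper.
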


\begin{proof}
Follows from the exactness of the sequence \eqref{KthExtSeq}, the Mayer-Vietoris sequence for $K$-theory with coefficients and the nine lemma.
\end{proof}

\section{Geometric Cycles}
\label{sectiongeometriccycles}

In this section, $B$ denotes a unital Banach algebra, $X$ a locally compact Hausdorff space and $\mathcal{L}\to X$ a locally trivial bundle of finitely generated projective $B$-modules. Let us remind the reader that the example to keep in mind is the following one: $\Gamma$ is a discrete group acting freely and cocompactly on $\tilde{X}$, $B$ is a Banach algebra completion of $\C[\Gamma]$ and $\mathcal{L}:=\tilde{X}\times _\Gamma B$ is a flat $B$-bundle over $X:=\tilde{X}/\Gamma$.

\begin{define} \label{cycDef}
A geometric cycle relative to $\mathcal{L}$ is given by, $(W,(\mathcal{E}_{B},\mathcal{E}_{B}',E_{\field{C}},E_\C',\alpha),f)$, where 
\begin{enumerate}
\item $W$ is a smooth compact spin$^c$-manifold with boundary;
\item $\mathcal{E}_B$ and $\mathcal{E}_{B}'$ are locally trivial smooth finitely generated projective $B$-module bundles over $W$;
\item $E_{\field{C}}$ and $E_\C'$ are smooth Hermitean vector bundles over $\partial W$;
\item $f:\partial W \rightarrow X$ is a continuous map;
\item $\alpha:\mathcal{E}_B|_{\partial W} \oplus  (E_\C'\otimes f^*\mathcal{L}_X)\rightarrow \mathcal{E}_{B}'|_{\partial W}\oplus (E_{\field{C}} \otimes f^*\mathcal{L}_X)$ is a smooth isomorphism of $B$-module bundles.
\end{enumerate}
\end{define}

A number of remarks are in order; all are in keeping with the Baum-Douglas model for K-homology. The smoothness assumptions on the $B$-module bundles can be lifted, see \cite[Theorem $3.14$]{Sch}. The manifold in a cycle need not be connected.  As such, the bundles can have varying fiber dimensions.  A geometric cycle relative to $\mathcal{L}$, $(W,(\mathcal{E}_{B},\mathcal{E}_{B}',E_{\field{C}},E_\C',\alpha),f)$, is even (resp. odd) if the dimension of each of the connected components of $W$ is even (resp. odd) dimensional.  Condition $5$ of Definition \ref{cycDef} guarantees that $(\mathcal{E}_{B},\mathcal{E}_{B}',E_{\field{C}},E_\C',\alpha)$ defines a smooth cocycle for $K^0(W,\partial W,\mu_\mathcal{L})$. There is a natural definition of isomorphism of cycles and it is often the case that when we refer to a ``cycle" we are in fact refering to an ``isomorphism class of a cycle".  The addition operation on cycles is given by disjoint union (written as $\dot{\cup}$) and the opposite of cycle is obtained by taking the opposite spin$^c$ structure on the manifold in the cycle; taking the opposite is denoted by 
$$-(W,(\mathcal{E}_{B},\mathcal{E}_{B}',E_{\field{C}},E_\C',\alpha),f)=(-W,(\mathcal{E}_{B},\mathcal{E}_{B}',E_{\field{C}},E_\C',\alpha),f).$$

\begin{define}
A regular domain, $Y$, of a manifold $M$ is a closed submanifold of $M$ such that
\begin{enumerate}
\item ${\rm int}(Y)\ne \emptyset$;
\item If $p\in \partial Y$, then there exists a coordinate chart near $p$ in $M$, $\phi:U \rightarrow \field{R}^n$ centered at $p$ such that $\phi(Y \cap U)=\{ x \in \phi(U) \: | \: x_n\ge 0 \} $. 
\end{enumerate}
\end{define}

\begin{define} \label{borDef}
A bordism (relative to $\mathcal{L}$) is given by $$((Z,W),(\mathcal{E}_B,\mathcal{E}_{B}',E_{\field{C}},E_\C',\alpha),f)$$
where
\begin{enumerate}
\item $Z$ is a smooth compact spin$^c$-manifold with boundary;
\item $W$ is a smooth compact spin$^c$-manifold with boundary which is a regular domain of $\partial Z$;
\item $\mathcal{E}_B$ and $\mathcal{E}_{B}'$ are smooth $B$-module bundles over $Z$;
\item $E_{\field{C}}$ and $E_\C'$ are smooth Hermitean vector bundles over $\partial Z- {\rm int}(W)$;
\item $f:\partial Z -{\rm int}(W) \rightarrow X$ is a continuous map;
\item $\alpha:\mathcal{E}_B|_{\partial Z -{\rm int}(W)}  \oplus  (E_\C'\otimes f^*\mathcal{L}_X)\rightarrow \mathcal{E}_{B}'|_{\partial Z -{\rm int}(W)} \oplus (E_{\field{C}} \otimes f^*\mathcal{L}_X)$ is an isomorphism of $B$-module bundles.
\end{enumerate}
The boundary of such a bordism is given by 
\begin{equation}
\label{boundarycycleequation}
(W,(\mathcal{E}_B|_W,\mathcal{E}_{B}'|_W,E_{\field{C}}|_{\partial W},E_\C'|_{\partial W} ,\alpha|_{\partial W}),f|_{\partial W}).
\end{equation}
We say two cycles, $\epsilon$ and $\epsilon^{\prime}$ are bordant (we write $\epsilon \sim_{bor} \epsilon^{\prime}$) if $\epsilon \: \dot{\cup} -\epsilon^{\prime}$ is a boundary.
\end{define}

\begin{remark} \label{bordismRelationRemark}
The reader should note that if a cycle of the form \eqref{boundarycycleequation} is the boundary of a bordism $((Z,W),(\mathcal{E}_B,\mathcal{E}_{B}',E_{\field{C}},E_\C',\alpha),f)$, then Baum-Douglas cycles defined from the boundaries $(\partial W, E_{\field{C}}|_{\partial W}, f|_{\partial W})$ and $(\partial W, E_{\field{C}}'|_{\partial W}, f|_{\partial W})$ are boundaries in $K_*^{geo}(X)$.  If we use the notation of Definition \ref{borDef}, explicit bordisms (in $K_*^{geo}(X)$) can be formed by taking $(\partial Z - {\rm int}(W),E_{\field{C}},f)$ respectively $(\partial Z - {\rm int}(W),E^{\prime}_{\field{C}},f)$.
\end{remark}

\begin{define}
We say that a cycle $(W,\xi,f)$ relative to $\mathcal{L}$ is degenerate if $\xi$ is a degenerate cocycle (see Definition \ref{defofdeg}). 
\end{define}

\begin{prop}
\label{bordismprop}
Bordism is an equivalence relation. The set of equivalence classes of geometric cycles relative to $\mathcal{L}$ under bordism forms an abelian group and the set of equivalence classes of degenerate geometric cycles a subgroup thereof. 
\end{prop}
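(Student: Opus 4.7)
The plan is to verify the three axioms of an equivalence relation for bordism, then check that disjoint union descends to an abelian group operation with the expected inverses, and finally confirm that degenerate cycles are closed under these operations.

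\emph{Reflexivity and the cylinder.} For reflexivity, given a cycle $\epsilon=(W,\xi,f)$ with $\xi=(\mathcal{E}_B,\mathcal{E}_B',E_\C,E_\C',\alpha)$, I would build the cylindrical bordism $Z=W\times [0,1]$, taking as regular domain $W_Z:=W\times\{0\}\sqcup (-W)\times\{1\}\subseteq \partial Z$, so that $\partial Z\setminus \mathrm{int}(W_Z)=\partial W\times[0,1]$. Pulling back $\mathcal{E}_B,\mathcal{E}_B'$ along the projection $Z\to W$, pulling back $E_\C,E_\C'$ and $\alpha$ along $\partial W\times[0,1]\to\partial W$, and composing $f$ with this projection yields a bordism whose boundary is $\epsilon\sqcup(-\epsilon)$; hence $\epsilon\sim_{bor}\epsilon$. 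Symmetry is immediate: if $((Z,W_Z),\eta,F)$ has boundary $\epsilon_1\sqcup(-\epsilon_2)$, then $((-Z,W_Z),\eta,F)$ has boundary $\epsilon_2\sqcup(-\epsilon_1)$.

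\emph{Transitivity by gluing.} Given bordisms $((Z_{12},W_{12}),\eta_{12},F_{12})$ from $\epsilon_1$ to $\epsilon_2$ and $((Z_{23},W_{23}),\eta_{23},F_{23})$ from $\epsilon_2$ to $\epsilon_3$, I would glue $Z_{12}$ and $Z_{23}$ along the closed submanifold of $\partial Z_{12}$ corresponding to $\epsilon_2$ and the submanifold of $\partial Z_{23}$ corresponding to $-\epsilon_2$. After smoothing corners (using a collar neighbourhood, exactly as in \cite{BD}) one obtains a compact spin${}^c$-manifold $Z_{13}$ whose boundary contains $W_{13}:=W_{12}\sqcup W_{23}$ as a regular domain. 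The $B$-bundles $\mathcal{E}_B$ and $\mathcal{E}_B'$ patch across the common piece by matching fibres, while the bundle isomorphisms $\alpha$ and the map $f$ on the exterior part $\partial Z_{13}\setminus \mathrm{int}(W_{13})$ are obtained by concatenating those from $Z_{12}$ and $Z_{23}$. The resulting object is a bordism from $\epsilon_1$ to $\epsilon_3$. This gluing step is the one point that requires genuine care and is the main technical obstacle: one must verify that the two bundle isomorphisms at the gluing interface agree on an overlap and can therefore be assembled into a single smooth $\alpha$ after a collar homotopy.

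\emph{Group structure.} Disjoint union is associative and commutative up to isomorphism and, being compatible with $-(\,\cdot\,)$, descends to a commutative semigroup operation on bordism classes. The empty cycle is an identity element, and the reflexivity argument above shows that $[\epsilon]+[-\epsilon]=0$ in this semigroup, giving inverses; thus the set of bordism classes is an abelian group.

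\emph{The degenerate subgroup.} Degeneracy is a property of the cocycle $\xi$ only, so $-\epsilon=(-W,\xi,f)$ is degenerate whenever $\epsilon$ is, and a disjoint union of degenerate cycles is degenerate by Definition~\ref{defofdeg} (the two listed forms are preserved by direct sum). The empty cycle is vacuously degenerate. Therefore the classes of degenerate cycles form a subsemigroup closed under inverses, i.e.\ a subgroup of the group just constructed.
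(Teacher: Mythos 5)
Your overall strategy matches the paper's. The paper only records the group/subgroup argument (null-bordism of $\epsilon\,\dot{\cup}\,(-\epsilon)$ via the cylinder, closure of degenerates under disjoint union and $-(\,\cdot\,)$) and dismisses the verification that bordism is an equivalence relation as clear; you verify the same facts and in addition spell out reflexivity, symmetry and transitivity, which is the right thing to do.

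There is one concrete slip in the transitivity step. After you glue $Z_{12}$ to $Z_{23}$ along the piece of their boundaries carrying $\epsilon_2$, that interface becomes \emph{interior} to the glued manifold and no longer lies on $\partial Z_{13}$. Hence the regular domain of the glued bordism is \emph{not} $W_{13}=W_{12}\sqcup W_{23}$: writing $W_{12}=W_1\,\dot{\cup}\,(-W_2)$ and $W_{23}=W_2\,\dot{\cup}\,(-W_3)$ for the two regular domains (where $W_i$ denotes the manifold underlying $\epsilon_i$), the correct regular domain of $Z_{13}$ is $W_1\,\dot{\cup}\,(-W_3)$. Taken literally, your choice would give a boundary cycle $\epsilon_1\,\dot{\cup}\,(-\epsilon_2)\,\dot{\cup}\,\epsilon_2\,\dot{\cup}\,(-\epsilon_3)$ rather than $\epsilon_1\,\dot{\cup}\,(-\epsilon_3)$, which does not establish $\epsilon_1\sim_{bor}\epsilon_3$. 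With the regular domain corrected, the rest of your argument --- patching the $B$-bundles across the interface, concatenating $\alpha$ and $f$ on the side boundary $\partial Z_{13}\setminus\mathrm{int}(W_{13})$, and smoothing corners via a collar --- goes through and exhibits $\epsilon_1\,\dot{\cup}\,(-\epsilon_3)$ as a boundary. You also reverse the sign convention when naming which piece of each bordism carries $\epsilon_2$: the relevant piece of $W_{12}$ is $-W_2$, carrying $-\epsilon_2$, and the relevant piece of $W_{23}$ is $W_2$, carrying $\epsilon_2$; these oppositely oriented pieces are exactly what one glues, so the construction itself is unaffected by the misnaming.
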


\begin{proof}
It is clear that bordism is an equivalence relation. That the set of equivalence classes form an abelian group follows from that for any geometric cycle $(W,(\mathcal{E}_B,\mathcal{E}_{B}',E_{\field{C}},E_\C',\alpha),f)$ the cycle 
$$(W,(\mathcal{E}_B,\mathcal{E}_{B}',E_{\field{C}},E_\C',\alpha),f)\dot{\cup}(-W,(\mathcal{E}_B,\mathcal{E}_{B}',E_{\field{C}},E_\C',\alpha),f)$$ 
is null bordant. The set of equivalence classes of degenerate geometric cycles is closed under disjoint union. If $(W,\xi,f)$ is degenerate, so is $(-W,\xi,f)$ and the bordism classes of degenerate cycles is closed under inverse. 
\end{proof}

\begin{prop}
\label{homotopypropforcycles}
Let $(W,\xi,f)$ and $(W,\xi',f)$ be geometric cycles relative to $\mathcal{L}$. If $\xi$ and $\xi'$ are homotopic, then there is a bordism 
$$(W,\xi,f)\sim_{bor}(W,\xi',f).$$
\end{prop}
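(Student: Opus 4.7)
The plan is to construct the bordism as a cylinder $Z := W \times [0,1]$ endowed with its natural product $\mathrm{spin}^c$-structure, and to use Proposition \ref{homotopyprop} to produce the required bundle/isomorphism data on it from the given homotopy between $\xi$ and $\xi'$.

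First, I would invoke Proposition \ref{homotopyprop}: since condition $(5)$ of Definition \ref{cycDef} shows that $\xi$ and $\xi'$ are smooth $\mu_\mathcal{L}$-relative $K$-theory cocycles over $(W,\partial W, f)$, and they are homotopic by hypothesis, the proposition produces a smooth $\mu_\mathcal{L}$-relative $K$-theory cocycle $\tilde\xi = (\tilde{\mathcal{E}}_B, \tilde{\mathcal{E}}_B', \tilde E_\C, \tilde E_\C', \tilde\alpha)$ over $(W\times [0,1],\partial W\times [0,1], f\circ \pi)$, where $\pi:\partial W\times[0,1]\to \partial W$ is the projection, whose restrictions at $t=0$ and $t=1$ are isomorphic to $\xi$ and $\xi'$ respectively.

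Next, I take the ``boundary piece'' $\tilde W \subset \partial Z$ of the bordism to be $W\times\{0\} \,\dot\cup\, W\times\{1\}$, which (after the standard corner-smoothing of the product manifold with boundary) is a regular domain of $\partial Z$ with complement $\partial Z - \mathrm{int}(\tilde W) = \partial W \times [0,1]$. Define $\tilde f := f\circ\pi : \partial W\times[0,1] \to X$, and assemble the bordism data
\[
((Z,\tilde W),(\tilde{\mathcal{E}}_B,\tilde{\mathcal{E}}_B',\tilde E_\C,\tilde E_\C',\tilde\alpha),\tilde f).
\]
All of the requirements of Definition \ref{borDef} are directly satisfied: $\tilde{\mathcal{E}}_B, \tilde{\mathcal{E}}_B'$ are smooth $B$-module bundles on $Z$, $\tilde E_\C, \tilde E_\C'$ live on $\partial Z - \mathrm{int}(\tilde W)$, and $\tilde\alpha$ is the required bundle isomorphism there, with $\tilde f|_{\partial W\times\{i\}} = f$ compatible with the restrictions of $\tilde\xi$.

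Computing the induced $\mathrm{spin}^c$-orientations on the two boundary components of $Z$, $W\times\{1\}$ inherits the orientation of $W$ while $W\times\{0\}$ inherits the opposite one, and the restrictions of $\tilde\xi$ agree with $\xi'$ and $\xi$ respectively. Therefore the boundary of the bordism is isomorphic to $(W,\xi',f)\,\dot\cup\,(-W,\xi,f)$, showing that $(W,\xi',f)\,\dot\cup\,-(W,\xi,f)$ is a boundary, which by definition means $(W,\xi,f)\sim_{bor}(W,\xi',f)$. The whole proof is essentially formal once Proposition \ref{homotopyprop} is available; the only mild technicality is the standard smoothing of the corners of $W\times[0,1]$ so that $\tilde W$ is genuinely a regular domain of $\partial Z$, and this has no effect on the bundle data.
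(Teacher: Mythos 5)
Your proof is correct and follows essentially the same route as the paper: invoke Proposition \ref{homotopyprop} to get the smooth cocycle $\tilde\xi$ on $W\times[0,1]$, straighten the angle so that $W\times\{0\}\,\dot\cup\,W\times\{1\}$ is a regular domain of $\partial(W\times[0,1])$, and read off the boundary as $(-W,\xi,f)\,\dot\cup\,(W,\xi',f)$. The only difference is cosmetic: you spell out the corner-smoothing and orientation bookkeeping slightly more explicitly than the paper does.
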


\begin{proof}
By Proposition \ref{homotopyprop}, there is a smooth cocycle $\tilde{\xi}$ relative to $\mathcal{L}$ over $(W\times [0,1],\partial W\times [0,1],f\circ \pi)$ isomorphic to $\xi$ and $\xi'$ in respective end points. Here $\pi:\partial W\times [0,1]\to \partial W$ denotes the projection. After "straightening the angle" (see \cite{Rav}), we can assume that $Z=W\times [0,1]$ is a smooth manifold with boundary $-W\cup_{\partial W} \partial W\times [0,1]\cup_{\partial W} W$. It follows that $((Z,-W\dot{\cup}W), \tilde{\xi},f\circ \pi)$ forms a bordism whose boundary is $(-W,\xi,f)\dot{\cup}(W,\xi',f)$.
\end{proof}

If $W$ is a smooth compact spin$^c$-manifold with boundary, $f:\partial W\to X$ a continuous mapping and $[\xi]\in K^0(W,\partial W;\mu_\mathcal{L})$; we call the triple $(W,[\xi],f)$ a cycle relative to $\mathcal{L}$ with $K$-theory data. The notion of bordism as in Definition \ref{borDef} adapts (in a very natural way) to cycles with $K$-theory data. 

\begin{prop}
\label{ktheoryincycles}
The abelian group of bordism classes of geometric cycles relative to $\mathcal{L}$ modulo the group of degenerate cycles is isomorphic to the abelian group of bordism classes of geometric cycles relative to $\mathcal{L}$ with $K$-theory data via the mapping 
\begin{equation}
\label{bundlestoclasses}
(W,\xi,f)\mapsto (W,[\xi],f).
\end{equation}
\end{prop}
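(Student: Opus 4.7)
The plan is to check that the map $\Phi\colon (W,\xi,f) \mapsto (W,[\xi],f)$ is a well-defined homomorphism descending to the quotient by degenerate cycles, and then to establish bijectivity.

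First, $\Phi$ is additive under disjoint union and sends a cocycle bordism $((Z,V),\Xi,F)$ to the $K$-theory-data bordism $((Z,V),[\Xi],F)$ by naturality of restriction in $K^0$, so it passes to bordism classes. To factor $\Phi$ through the degenerate subgroup, I would construct explicit null-bordisms in the $K$-theory-data category for each degenerate cocycle cycle, using the structural conditions of Definition \ref{defofdeg}: for a type (1) degenerate cocycle the extending isomorphism $\bar\alpha_0\colon\mathcal{E}_B\to\mathcal{E}_B'$ on $W$ gives a canonical bordism on the cylinder $W\times[0,1]$, while for a type (2) cocycle $\xi_0\oplus\xi_0^{\mathrm{op}}$ the homotopy $\alpha\oplus\alpha^{-1}\sim \mathrm{id}$ through isomorphisms, combined with Proposition \ref{homotopypropforcycles}, reduces to the type (1) case. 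Surjectivity is then immediate from Proposition \ref{standardrep}: every class $z\in K^0(W,\partial W;\mu_\mathcal{L})$ is represented by a single cocycle $\xi$ (the correction term is degenerate and so vanishes in $K^0$), and $\Phi(W,\xi,f)=(W,z,f)$.

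Injectivity is the main step. Suppose $(W,[\xi],f)$ is null-bordant in the $K$-theory-data sense via some $((Z,W),[\tilde\xi],F)$. Lift $[\tilde\xi]$ to a cocycle $\tilde\Xi$ over $(Z,\partial Z-\mathrm{int}(W))$; then $((Z,W),\tilde\Xi,F)$ is a cocycle bordism whose boundary is $(W,\tilde\Xi|_W,f)$ with $[\tilde\Xi|_W]=[\xi]$. Injectivity thus reduces to the key claim that \emph{if $\xi_1,\xi_2$ are cocycles on $(W,\partial W)$ with $[\xi_1]=[\xi_2]$ in $K^0(W,\partial W;\mu_\mathcal{L})$, then $(W,\xi_1,f)$ and $(W,\xi_2,f)$ differ by a bordism and a degenerate cycle.} By construction, the $K^0$-equivalence is generated by three elementary moves: isomorphism of cocycles, stabilization by a degenerate cocycle (Remark \ref{stabilizationremark}), and homotopy of cocycles. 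Each lifts to a cycle-level relation: isomorphism yields an isomorphism of cycles, stabilization $\xi_1+\xi_0\cong \xi_2+\xi_0$ yields equality modulo the degenerate cycle $(W,\xi_0,f)$, and homotopy yields a bordism by Proposition \ref{homotopypropforcycles}. Concatenating these moves along the $K^0$-equivalence chain proves the key claim and hence injectivity.

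The principal obstacle will be the descent through the degenerate subgroup: producing the null-bordisms of degenerate cocycle cycles in the $K$-theory-data category requires careful exploitation of the extension and tensor-product structure encoded in Definition \ref{defofdeg}. Once this is in hand, the rest of the proof is essentially bookkeeping, converting the generators of the $K^0$-equivalence into the expected cycle-level moves.
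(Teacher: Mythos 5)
Your proof is essentially correct and is structured in parallel to the paper's, though the packaging differs. The paper explicitly constructs an inverse map $(W,[\xi],f)\mapsto (W,\xi_1,f)\,\dot\cup\,(-W,\xi_2,f)$ for a representation $[\xi]=[\xi_1]_0-[\xi_2]_0$ in $Gr(W,\partial W;\mu_\mathcal{L})$, and then checks well-definedness of that inverse against the choice of preimage in $Gr$ (using Proposition \ref{homotopypropforcycles} for homotopies and cancellation in the bordism group for the degenerate correction). You instead verify injectivity and surjectivity of the forward map directly. The technical substance is the same in both cases: Proposition \ref{standardrep} to realize $K^0$-classes by single cocycles, Proposition \ref{homotopypropforcycles} to convert homotopies into bordisms, and Remark \ref{stabilizationremark} to control the degenerate ambiguity. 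One place where you make the argument harder than it needs to be: to see that $\Phi$ kills degenerate cycles you do not need to build null-bordisms by exploiting the internal structure of Definition \ref{defofdeg}. By construction of $K^0(W,\partial W;\mu_\mathcal{L})$, any degenerate cocycle $\xi$ already satisfies $[\xi]=0$, so $\Phi(W,\xi,f)=(W,0,f)$, which is null-bordant via the cylinder $W\times[0,1]$ with the zero class. Your appeal to the Whitehead rotation trick and the extension of $\alpha_0$ is valid but superfluous. Finally, in your injectivity step, when you ``concatenate moves along the $K^0$-equivalence chain,'' you should note (as you implicitly do by citing Proposition \ref{standardrep}) that intermediate elements of $Gr$ may be realized by single cocycles up to degenerate correction; this is exactly what the paper does when cancelling the third term via the group property of bordism classes, so the arguments are genuinely the same at this point.
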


\begin{proof}
Since the mapping in Equation \eqref{bundlestoclasses} takes bordisms to bordisms it produces a well defined mapping on bordism classes. The proof is complete if we can produce an inverse mapping. If we have a cycle with $K$-theory data $(W,[\xi],f)$, and represent $[\xi]=[\xi_1]-[\xi_2]$ were $\xi_1=(\mathcal{E}_1,\mathcal{E}_1',E_1,E_1',\alpha_1)$ and $\xi_2=(\mathcal{E}_2,\mathcal{E}_2',E_2,E_2',\alpha_2)$ are relative $K$-theory cocycles. We claim that, modulo degenerate cycles, the bordism class of the cycle 
$$(W,\xi_1,f)\dot{\cup}(-W,\xi_2,f)$$ 
only depends on $(W,[\xi],f)$. By Proposition \ref{homotopypropforcycles}, homotopic cocycles define bordant cycles. If $[\xi]=[\tilde{\mathcal{E}}_1,\tilde{\mathcal{E}}_1',\tilde{E}_1,\tilde{E}_1',\tilde{\alpha}_1]-[\tilde{\mathcal{E}}_2,\tilde{\mathcal{E}}_2',\tilde{E}_2,\tilde{E}_2',\tilde{\alpha}_2]$, there is a bundle $(\hat{\mathcal{E}}_1,\hat{\mathcal{E}}_1',\hat{E}_1,\hat{E}_1',\hat{\alpha})$ such that 
\begin{align*}
(W,(\mathcal{E}_1,\mathcal{E}_1',E_1,E_1',\alpha_1),f)\dot{\cup}(W,(\tilde{\mathcal{E}}_2,\tilde{\mathcal{E}}_2',\tilde{E}_2,\tilde{E}_2',\tilde{\alpha}_2),f)\dot{\cup}(W,(\hat{\mathcal{E}}_1,\hat{\mathcal{E}}_1',\hat{E}_1,\hat{E}_1',\hat{\alpha}),f)=\\
(W,(\tilde{\mathcal{E}}_1,\tilde{\mathcal{E}}_1',\tilde{E}_1,\tilde{E}_1',\tilde{\alpha}_1),f)\dot{\cup}(W,(\mathcal{E}_2,\mathcal{E}_2',E_2,E_2',\alpha_2),f)\dot{\cup}(W,(\hat{\mathcal{E}}_1,\hat{\mathcal{E}}_1',\hat{E}_1,\hat{E}_1',\hat{\alpha}),f).
\end{align*}
After using the group property of the bordism classes, the third term cancels giving 
\begin{align*}
(W,(\mathcal{E}_1,\mathcal{E}_1',E_1,E_1',\alpha_1),f)&\dot{\cup}(W,(\tilde{\mathcal{E}}_2,\tilde{\mathcal{E}}_2',\tilde{E}_2,\tilde{E}_2',\tilde{\alpha}_2),f)\sim_{bor} \\
&(W,(\tilde{\mathcal{E}}_1,\tilde{\mathcal{E}}_1',\tilde{E}_1,\tilde{E}_1',\tilde{\alpha}_1),f)\dot{\cup}(W,(\mathcal{E}_2,\mathcal{E}_2',E_2,E_2',\alpha_2),f).
\end{align*}
It ensures that the bordism class of the cycle $(W,\xi_1,f)\dot{\cup}(-W,\xi_2,f)$ modulo degenerate cycles does not depend on the pre image of $[\xi]$ in $Gr(W,\partial W;\mu)$ since equivalent $\mu$-relative $K$-theory cocycles produce cycles bordant to degenerate cycles. 
\end{proof}

\begin{define}
Let $(W,[\xi],f)$ be a cycle with $K$-theory data and $V$ a smooth spin$^c$-vector bundle over $W$ with even rank.  The vector bundle modification of $(W,[\xi],f)$ by $V$ is defined to be the cycle:
$$(W^V,\pi^*[\xi]\otimes [\beta_V] ,f\circ \pi) $$
where
\begin{enumerate}
\item ${\bf 1}$ is the trivial real line bundle over $W$ (i.e., $W\times \field{R}$).
\item $W^V:=S(V\oplus {\bf 1})$ (i.e., the sphere bundle of $E\oplus {\bf 1}$).
\item $[\beta_V]\in K^0(V)\subseteq K^0(W^V)$ is the Bott element (see \cite[Section 2.5]{Rav});
\item $\pi:W^V \rightarrow W$ is the bundle projection.
\end{enumerate}
The vector bundle modification of a cycle $(W,[\xi],f)$ by $V$ is denoted by $(W,[\xi],f)^V$.
\end{define}

\begin{define} \label{equCyc}
Let $\mathcal{S}^{geo}_*(X;\mathcal{L})$ be the $\field{Z}/2$-graded abelian group given by (isomorphism classes of) cycles with $K$-theory data modulo the equivalence relation generated by
\begin{enumerate}
\item {\bf Bordism:} Bordant cycles are declared to be equivalent.
\item {\bf Vector bundle modification:}  Given a cycle, $(W,[\xi],f)$ and spin$^c$-vector bundle $V\to W$ of even rank, we declare that 
$$(W,[\xi],f) \sim (W,[\xi],f)^V $$
\end{enumerate}
If $X=B\Gamma$ and $\mathcal{L}=\mathcal{L}_{\mathcal{A}(\Gamma)}$ for a discrete finitely generated group $\Gamma$ and a Banach algebra completion $\mathcal{A}(\Gamma)$ of $\C[\Gamma]$ we use the notation 
$$\mathcal{S}^{geo}_*(\Gamma;\mathcal{A}):=\mathcal{S}^{geo}_*(B\Gamma;\mathcal{L}_{\mathcal{A}(\Gamma)})$$
\end{define}

It is more or less obvious that $\mathcal{S}^{geo}_*(X;\mathcal{L})$ is an abelian semigroup.  That $\mathcal{S}^{geo}_*(X;\mathcal{L})$ is an abelian group follows from Proposition \ref{bordismprop}.  The $\field{Z}/2$-grading is given by the dimensionality of the connected components modulo two. The next proposition implies that addition within $\mathcal{S}^{geo}_*(X;\mathcal{L})$ is compatible with addition within $K^0(W, \partial W;\mu_{\mathcal{L}})$. The proof (which is left to the reader) is similar to the proof in \cite[Proposition 4.11]{DeeRZ} (also see \cite[Proposition 4.3.2]{Rav}).

\begin{prop}Let $(W,[\xi_1],f)$ and $(W,[\xi_2],f)$ be cycles. Then
$$(W,[\xi_1],f) \dot{\cup} (W,[\xi_2],f) \sim (W,[\xi_1] + [\xi_2],f).$$ 
\end{prop}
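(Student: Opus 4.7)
The strategy, following \cite[Proposition 4.11]{DeeRZ} and \cite[Proposition 4.3.2]{Rav}, is to produce an explicit bordism realising the equivalence. By Proposition \ref{ktheoryincycles} I may work with concrete cocycles: choose representatives $\xi_i=(\mathcal{E}_{B,i},\mathcal{E}_{B,i}',E_{\field{C},i},E_{\field{C},i}',\alpha_i)$ of $[\xi_i]$; then $[\xi_1]+[\xi_2]$ is represented by the direct sum cocycle $\xi_1\oplus\xi_2$. The task reduces to exhibiting a bordism whose boundary cycle is
\[(W,\xi_1,f)\dot{\cup}(W,\xi_2,f)\dot{\cup}(-W,\xi_1\oplus\xi_2,f).\]

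The bordism is built on a spin$^c$-manifold-with-boundary $Z$ whose boundary contains, as a regular domain $W_0$, three disjoint copies of $W$ (two positively oriented and one with the opposite spin$^c$-structure), while the complementary side $\partial Z-\mathrm{int}(W_0)$ is a manifold-with-boundary whose boundary equals $3\cdot\partial W$. Such a $Z$ is constructed by a standard handle-theoretic argument starting from the two-cylinder configuration $(W\times[0,1])\sqcup(W\times[0,1])$; the key differential-topological input is that the disjoint union $\partial W\sqcup\partial W\sqcup\partial W$ cobounds the manifold $W\sqcup(\partial W\times[0,1])$, which provides the topological model for the complementary side. On $Z$ one places the relative $K$-theory data assembled from $\xi_1$, $\xi_2$ and $\xi_1\oplus\xi_2$, arranged so that restriction to the three components of $W_0$ recovers the desired three cocycles up to isomorphism and degenerate summands (the latter being absorbed via Proposition \ref{ktheoryincycles}).

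The main technical obstacle is the compatibility of the isomorphism $\alpha$ over the side part of the bordism, which must match $\alpha_1$, $\alpha_2$ and $\alpha_1\oplus\alpha_2$ at the three $\partial W$-components of $\partial W_0$. This matching is afforded by the elementary $K_1$-theoretic identity $[\alpha_1\oplus\alpha_2]=[\alpha_1]+[\alpha_2]$ in $K_1(C(\partial W,B))$, which, after the stabilisation procedure of Proposition \ref{degeneratestabilization}, guarantees the required interpolation by a standard path-lifting argument in $\mathrm{GL}_\infty$. Once this interpolation is installed, reading off the boundary of the bordism yields---up to degenerate summands---the three-component cycle $(W,\xi_1,f)\dot{\cup}(W,\xi_2,f)\dot{\cup}(-W,\xi_1\oplus\xi_2,f)$, establishing the equivalence in $\mathcal{S}^{geo}_*(X;\mathcal{L})$.
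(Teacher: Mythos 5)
Your geometric strategy cannot actually be carried out, and the obstruction is concrete. You propose a bordism $Z$ built from the two‐cylinder configuration $(W\times[0,1])\sqcup(W\times[0,1])$, with regular domain $W_0\cong W\sqcup W\sqcup(-W)$ and side $S\cong W\sqcup(\partial W\times[0,1])$. The difficulty is that one of the four $W$-ends of the two cylinders is then forced into $S$, and by homotopy invariance the $B$-bundle data $(\mathcal{E}_B,\mathcal{E}'_B)$ on a cylinder $W\times[0,1]$ is (up to isomorphism) pulled back from a single fibre, so it restricts to the \emph{same} class at both ends. If, say, the end $W\times\{0\}$ lies in $W_0$ and must carry $\mathcal{E}_{\xi_1}$, then the opposite end $W\times\{1\}\subset S$ also carries $\mathcal{E}_{\xi_1}$. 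But Definition~\ref{borDef} requires an isomorphism
\[
\alpha:\mathcal{E}_B|_{S}\oplus(E'_\C\otimes f^*\mathcal{L})\xrightarrow{\sim}\mathcal{E}'_B|_{S}\oplus(E_\C\otimes f^*\mathcal{L})
\]
over \emph{all} of $S$, in particular over this extra copy of $W$. That forces $[\mathcal{E}_{\xi_1}]-[\mathcal{E}'_{\xi_1}]\in K^0(W;B)$ to lie in the image of $\mu_{f^*\mathcal{L}}:K^0(W)\to K^0(W;B)$ for some extension of $f$ over $W$, which is not part of a $\mathcal{S}^{geo}$-cycle's data: the original isomorphism $\alpha_{\xi_1}$ only lives over $\partial W$, and the map $f:\partial W\to X$ generally does not even extend continuously to $W$. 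Allowing degenerate summands and stabilization does not remove this, because degenerate cocycles contribute nothing to $[\mathcal{E}_B]-[\mathcal{E}'_B]$; and attaching handles in the interior of $Z$ leaves the boundary data untouched. Your appeal to the identity $[\alpha_1\oplus\alpha_2]=[\alpha_1]+[\alpha_2]$ in $K_1$ and to a path-lifting argument in $\mathrm{GL}_\infty$ is aimed at gluing the isomorphisms over $\partial W$, but the obstruction identified above lives in $K^0(W;B)$, one step away from where your $K_1$-argument operates.

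A further, smaller, point: you never invoke vector bundle modification, even though it is one of only two generating relations for $\mathcal{S}^{geo}_*(X;\mathcal{L})$ and the paper's references (Raven's Proposition~4.3.2, DeeRZ Proposition~4.11) and its own related proofs (Theorem~\ref{sixTerExtSeq}, Lemma~\ref{twoVecMod}) rely on vector bundle modification by a normal bundle precisely to evade the type of topological obstruction you would otherwise hit. A workable argument should first show $(W,0,f)\sim 0$ via the null-bordism $Z=W\times[0,1]$ with $W_0=W\times\{0\}$ and side $S=(\partial W\times[0,1])\cup(W\times\{1\})$ (here the zero cocycle trivially satisfies the isomorphism condition over $S$), reducing the statement to a bordism between two cycles over the \emph{same} manifold $W\sqcup W$ with $K$-theory data $([\xi_1],[\xi_2])$ and $([\xi_1+\xi_2],0)$ respectively, and then use vector bundle modification together with the excision properties of $K^0(\,\cdot\,,\,\cdot\,;\mu_\mathcal{L})$ recorded in Proposition~\ref{allexactnesseverwanted} to construct that bordism.
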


\begin{remark}
We could equally well vector bundle modify cycles with vector bundle data using the Bott bundle (see \cite[Section 10]{BD}). Since degenerate cycles are closed, and homotopy of cocycles is preserved, under the action of $K$-theory, the result will modulo degenerate cycles only differ by a boundary. The abelian group $\mathcal{S}^{geo}_*(X;\mathcal{L})$ can be obtained from cycles with vector bundle data as in Definition \ref{equCyc} after adding two more relations:
\begin{enumerate}
\item[(3)] {\bf Degenerates are trivial:} $(W,\xi,f)\sim 0$ if $\xi$ is degenerate.
\item[(4)] {\bf Direct sum/disjoint union:} Given cycles $(W,\xi_1,f)$ and $(W,\xi_2,f)$, we declare that
$$(W,\xi_1,f) \dot{\cup} (W,\xi_2,f) \sim (W,\xi_1\oplus \xi_2,f).$$ 
\end{enumerate}
\end{remark}

\section{Relation with the assembly map}

The main result of this section is the proof that the geometric surgery group $\mathcal{S}_*^{geo}(X;\mathcal{L})$ fits in a six term exact sequence with the assembly mapping $\mu_\mathcal{L}:K_*^{geo}(X)\to K_*^{geo}(pt;B)$. As a first iteration we prove that the six term sequence is a complex (Proposition \ref{sixTerExtSeqPart1}) and after adapting the notions of normal bordism to the geometric group  $\mathcal{S}_*^{geo}(X;\mathcal{L})$ in Subsection \ref{subsectionnormalbordism}, we prove that the six term sequence is exact (Theorem \ref{sixTerExtSeq}). The reader should note that the development here is similar to \cite[Section 4]{DeeRZ}.

\begin{prop} \label{sixTerExtSeqPart1}
Let $X$ be locally compact and $\mathcal{L}\to X$ a bundle of projective finitely generated $B$-modules. If we use the following notation
\begin{enumerate}
\item $\mu_\mathcal{L}:K_*^{geo}(X)\to K_*^{geo}(pt;B)$ is the assembly mapping;
\item $r:K_*^{geo}(pt;B) \rightarrow \mathcal{S}^{geo}_*(X;\mathcal{L})$ is defined at the level of cycles via $(M,\mathcal{E}_{B})$ is mapped to $(M,(\mathcal{E}_{B},M\times 0,\emptyset,\emptyset,\emptyset),\emptyset)$;
\item $\delta: \mathcal{S}^{geo}_*(X;\mathcal{L}) \rightarrow K_{*-1}^{geo}(X)$ is defined at the level of cycles via 
$$(W,(\mathcal{E}_{B},\mathcal{E}_{B}',E_{\field{C}},E_\C',\alpha),f)\mapsto (\partial W, E_{\field{C}},f)\dot{\cup} (-\partial W, E_\C',f);$$
\end{enumerate}
then $r \circ \mu_\mathcal{L}=0$, $\delta \circ r=0$ and $\mu_\mathcal{L}\circ \delta=0$.
\end{prop}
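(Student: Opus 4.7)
The plan is to verify each of the three compositions vanishes by a direct bordism argument or an immediate observation.

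For $r\circ\mu_{\mathcal{L}}=0$, given a cycle $(M,E_{\field{C}},\varphi)$ for $K^{geo}_*(X)$, the composition produces the cycle $(M,(E_{\field{C}}\otimes\varphi^*\mathcal{L},0,\emptyset,\emptyset,\emptyset),\emptyset)$ in $\mathcal{S}^{geo}_*(X;\mathcal{L})$ with $M$ closed (so $\partial M=\emptyset$ and the vector-bundle/map data on $\partial M$ are vacuous). I would exhibit the cylindrical bordism $Z:=M\times[0,1]$, with regular domain $W:=M\times\{0\}\subseteq\partial Z$, with the $B$-bundles $\mathcal{E}_B:=\mathrm{pr}_1^*(E_{\field{C}}\otimes\varphi^*\mathcal{L})$ and $\mathcal{E}_B':=0$ on $Z$, and with the data $E_{\field{C}}^{\mathrm{new}}:=E_{\field{C}}$, $(E_{\field{C}}')^{\mathrm{new}}:=0$, $f:=\varphi\circ\mathrm{pr}_1$ on the complementary boundary piece $M\times\{1\}$, coupled with the tautological identity isomorphism $\mathcal{E}_B|_{M\times\{1\}}=E_{\field{C}}\otimes f^*\mathcal{L}$. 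Applying the boundary formula of Definition \ref{borDef} to this bordism recovers exactly the image cycle.

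The statement $\delta\circ r=0$ is immediate from the definitions: every cycle in the image of $r$ has $\partial W=\emptyset$, so $\delta$ produces the empty cycle.

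For $\mu_{\mathcal{L}}\circ\delta=0$, the composition sends $(W,(\mathcal{E}_B,\mathcal{E}_B',E_{\field{C}},E_{\field{C}}',\alpha),f)$ to
\[ (\partial W,E_{\field{C}}\otimes f^*\mathcal{L})\,\dot{\cup}\,(-\partial W,E_{\field{C}}'\otimes f^*\mathcal{L}) \]
in $K^{geo}_*(pt;B)$. In the standard Baum--Douglas sense, the manifolds-with-boundary $(W,\mathcal{E}_B)$ and $(W,\mathcal{E}_B')$ are themselves bordisms witnessing $(\partial W,\mathcal{E}_B|_{\partial W})=0=(\partial W,\mathcal{E}_B'|_{\partial W})$ in $K^{geo}_*(pt;B)$. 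On the other hand, the bundle isomorphism $\alpha$ yields the equality of cycles
\[ (\partial W,\mathcal{E}_B|_{\partial W}\oplus E_{\field{C}}'\otimes f^*\mathcal{L})=(\partial W,\mathcal{E}_B'|_{\partial W}\oplus E_{\field{C}}\otimes f^*\mathcal{L}). \]
Combining these two facts using additivity of direct sum/disjoint union gives $(\partial W,E_{\field{C}}\otimes f^*\mathcal{L})=(\partial W,E_{\field{C}}'\otimes f^*\mathcal{L})$ in $K^{geo}_*(pt;B)$, which is precisely the claimed vanishing. The only mildly delicate point, and the main bookkeeping obstacle, is matching spin$^c$-orientations and outward-normal conventions so that the boundary formula of Definition \ref{borDef} really reproduces the intended cycle (rather than its opposite) in the cylindrical bordism of the first step.
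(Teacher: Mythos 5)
Your proposal is correct and follows essentially the same route as the paper: the cylindrical bordism $M\times[0,1]$ for $r\circ\mu_{\mathcal{L}}=0$, the trivial observation $\partial W=\emptyset$ for $\delta\circ r=0$, and using the isomorphism $\alpha$ together with the bounding manifold $W$ for $\mu_{\mathcal{L}}\circ\delta=0$ (the paper phrases this last step in $K$-theory class notation by writing the image as $\partial(W,[\mathcal{E}_B]-[\mathcal{E}_B'])$, whereas you spell it out at the bundle level via additivity; the content is the same). The orientation caveat you flag at the end is not an actual gap: a Baum--Douglas cycle $\epsilon$ is null-bordant if and only if $-\epsilon$ is, so it is immaterial whether the boundary formula returns the cycle or its opposite.
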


\begin{proof}
That the maps are well-defined follows from the compatiblity of the relations in Definition \ref{equCyc} and the relation of the Baum-Douglas model.  For example, for the map $\delta$ and the bordism relation see Remark \ref{bordismRelationRemark}.  The details for the rest of the relations are left to the reader. 
\par 
The bordisms relations in the various geometric models implies that the composition of successive map is zero.  The details for $r\circ \mu_\mathcal{L}=0$ are as follows.  Suppose that $(M,[E_{\field{C}}]-[E'_\C],f)$ is a cycle in $K_*^{geo}(X)$ with $K$-theory data.  We are required to show that 
$$(M,(E_{\field{C}}\otimes_{\field{C}}\mathcal{L},E_{\field{C}}'\otimes_{\field{C}}\mathcal{L},\emptyset,\emptyset,\emptyset),\emptyset)$$ 
is a boundary in $\mathcal{S}^{geo}_*(X;\mathcal{L})$.  This follows from the following bordism
$$((M\times [0,1], M\times \{1\}),(\pi^*(E_{\field{C}}\otimes_{\field{C}}\mathcal{L}),\pi^*(E_{\field{C}}\otimes_{\field{C}}\mathcal{L}),E_{\field{C}}\times \{0\},E_\C'\times\{0\},\mathrm{id}),f) $$
where $\pi:M\times [0,1] \rightarrow M$ is the projection map. 
\par
The composition $\delta \circ r$ is clearly zero since cycles in the image of $r$ have empty boundary.  
\par
Finally, we consider the case of $\mu_\mathcal{L} \circ \delta$.  As such, let $(W,(\mathcal{E}_{B},\mathcal{E}_B',E_{\field{C}},E_\C',\alpha),f)$ be a cycle in $\mathcal{S}^{geo}_*(X;\mathcal{L})$.  Then 
\begin{eqnarray*}
(\mu_\mathcal{L} \circ \delta)(W,(\mathcal{E}_{B},\mathcal{E}_B',E_{\field{C}},E_\C',\alpha),f) & = & \mu_\mathcal{L}(\partial W,E_{\field{C}},f)-\mu_\mathcal{L}(\partial W,E_{\field{C}}',f) \\
& = & (\partial W, E_{\field{C}}\otimes f^*(\mathcal{L}))-(\partial W, E_{\field{C}'}\otimes f^*(\mathcal{L})) \\
& = &(\partial W, [E_{\field{C}}\otimes f^*(\mathcal{L})]-[E_{\field{C}}'\otimes f^*(\mathcal{L})])\\
& = & \partial (W,[\mathcal{E}_B]-[\mathcal{E}_B'])=0
\end{eqnarray*}
This completes the proof.
\end{proof}

\subsection{Normal Bordism}
\label{subsectionnormalbordism}
\begin{define}
Let $E$ be a vector bundle.  Then, a vector bundle, $F$, is a complementary bundle for $E$, if $E\oplus F$ is a trivial vector bundle.  If $M$ is a manifold (possibly with boundary), then a complementary bundle for $TM$ will be called a normal bundle. 
\end{define}
Given a manifold with boundary $(W,\partial W)$, we can produce a normal bundle by taking a neat embedding in $H^n:=\{ (v_1,\ldots,v_n) \in \field{R}^n \: | \: v_1 \geq 0 \}$ for $n$ sufficiently large.  
\begin{define}
Let $(W,[\xi],f)$ and $(W^{\prime},[\xi^{\prime}],f^{\prime})$ be two cycles in $\mathcal{S}^{geo}_*(X;\mathcal{L})$.  Then, we say there is a normal bordism from $(W,[\xi],f)$ to $(W^{\prime},[\xi^{\prime}],f^{\prime})$ if there exists normal bundles $N_W$, $N_{W^{\prime}}$ (over $W$ and $W^{\prime}$ respectively) such that 
$$(W,[\xi],f)^{N_W} \sim_{bor} (W^{\prime},[\xi^{\prime}],f^{\prime})^{N_{W^{\prime}}}$$
This relation is denoted by $\sim_{nor}$.
\end{define}

The next lemma is a standard result in geometric topology (see for example \cite[Lemma 4.5.7]{Rav}); the lemma that follows it is a natural generalization of \cite[Lemma 4.4.3]{Rav}. We prove the latter for the convenience of the reader. 

\begin{lemma} \label{norStaIso}
Normal bundles are stably isomorphic (i.e., if $N_1$ and $N_2$ are normal bundles for $W$, then there exists trivial bundles, $E_1$ and $E_2$, such that $N_1 \oplus E_1 \cong N_1 \oplus E_2$).
\end{lemma}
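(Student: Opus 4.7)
The plan is to unpack the definition of ``normal bundle'' and use the symmetry trick that is standard when working with stable equivalence of complements. By definition, each normal bundle $N_i$ ($i=1,2$) comes with an isomorphism $TW \oplus N_i \cong W \times \mathbb{R}^{n_i}$ for some $n_i \in \mathbbm{N}$. This is all the structure there is to exploit.

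Starting from this, I would form the direct sum $N_1 \oplus TW \oplus N_2$ and evaluate it in two ways. On the one hand,
\[
N_1 \oplus (TW \oplus N_2) \cong N_1 \oplus (W \times \mathbb{R}^{n_2}),
\]
and on the other hand, using that direct sum of vector bundles is commutative up to canonical isomorphism,
\[
(N_1 \oplus TW) \oplus N_2 \cong (W \times \mathbb{R}^{n_1}) \oplus N_2.
\]
Hence $N_1 \oplus E_1 \cong N_2 \oplus E_2$ with $E_1 := W \times \mathbb{R}^{n_2}$ and $E_2 := W \times \mathbb{R}^{n_1}$, both of which are trivial. This gives the asserted stable isomorphism.

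There is essentially no obstacle here; the result is purely formal once one uses the definition of ``complementary bundle.'' The only point requiring any care is to make sure $W$ admits \emph{some} normal bundle in the first place, i.e., that $TW$ is stably trivial. This is precisely the content of the neat embedding into the half-space $H^n$ for $n$ large enough, which the authors invoke just above the statement: choose such an embedding $W \hookrightarrow H^n$, take the orthogonal complement of $TW$ inside $W \times \mathbb{R}^n$, and one obtains a normal bundle. With existence in hand, the two-line argument above completes the proof.
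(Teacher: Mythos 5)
Your proof is correct. The paper does not prove this lemma itself but defers to \cite[Lemma 4.5.7]{Rav}; your computation of $N_1\oplus TW\oplus N_2$ in two ways is precisely the standard swindle argument that such a reference would contain, so the approach is essentially the same. One small observation: the lemma as printed has a typo, asserting $N_1\oplus E_1\cong N_1\oplus E_2$ where $N_1\oplus E_1\cong N_2\oplus E_2$ is clearly intended, and you correctly proved the intended statement.
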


\begin{lemma} \label{twoVecMod}
If $E_1$ and $E_2$ are spin$^c$-vector bundles with even dimensional fibers over a compact spin$^c$-manifold $W$ and $(W,\xi,f)$ is a cycle, then
\begin{equation*}
(W,[\xi],f)^{E_1 \oplus E_2} \sim_{bor} ((W,[\xi],f)^{E_1})^{p^*(E_2)}
\end{equation*}
where $p:S(E_1\oplus {\bf 1}) \rightarrow W$ is the projection map.
\end{lemma}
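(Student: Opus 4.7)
The proof follows the strategy of \cite[Lemma 4.4.3]{Rav}, which establishes the analogous statement for ordinary Baum--Douglas cycles, and upgrades the argument to the present setting involving relative $K$-theory data.

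First I would unpack both sides explicitly. With $n_i=\mathrm{rk}\,E_i$, $\pi:S(E_1\oplus E_2\oplus\mathbf{1})\to W$, $p:S(E_1\oplus\mathbf{1})\to W$, and $q:S(p^*E_2\oplus\mathbf{1})\to S(E_1\oplus\mathbf{1})$, the two cycles to be related are
\[
\bigl(S(E_1\oplus E_2\oplus\mathbf{1}),\,\pi^*[\xi]\otimes[\beta_{E_1\oplus E_2}],\,f\circ\pi\bigr)
\]
and
\[
\bigl(S(p^*E_2\oplus\mathbf{1}),\,(p\circ q)^*[\xi]\otimes q^*[\beta_{E_1}]\otimes[\beta_{p^*E_2}],\,f\circ p\circ q\bigr),
\]
both spin$^c$-manifolds of dimension $\dim W+n_1+n_2$.

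Second I would construct an explicit spin$^c$-bordism $Z$ between the two underlying manifolds (with appropriate behavior over $\partial W$). The essential geometric input is that, fiberwise, $S^{n_1+n_2}$ and $S^{n_1}\times S^{n_2}$ are spin$^c$-cobordant, and this cobordism can be globalized: concretely, one works inside a carefully chosen sub-bundle of $E_1\oplus E_2\oplus\mathbf{1}\oplus\mathbf{1}$ over $W$ and performs the fiberwise corner-straightening/surgery of \cite{Rav} to produce a smooth spin$^c$-manifold $Z$ of dimension $\dim W+n_1+n_2+1$ whose boundary contains, as a regular domain, the disjoint union of the two sphere bundles with opposite orientations.

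Third I would extend the relative $K$-theory data. Pulling $[\xi]$ back from $W$ along the projection $Z\to W$ and tensoring with a globally defined Bott class on $Z$, I would produce a class $[\tilde\xi]\in K^0(Z,\partial Z;\mu_\mathcal{L})$ whose restrictions to the two sphere bundle boundary components match the cocycles above. The key algebraic input is the multiplicativity of the Bott class, $[\beta_{E_1\oplus E_2}]=[\beta_{E_1}]\cdot[\beta_{E_2}]$; Propositions \ref{homotopypropforcycles} and \ref{ktheoryincycles} then handle the isomorphism and stable-equivalence ambiguities in the relative $K$-theory cocycles.

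The main obstacle lies in Step 2: producing $Z$ as an honest smooth spin$^c$-manifold whose two distinguished boundary components carry precisely the spin$^c$-orientations induced by the one-step Bott element and the iterated Bott product. This amounts to the same sign-and-orientation tracking that underlies Whitney-sum multiplicativity of the Bott class on the $K$-theoretic side; the hypothesis that both $E_1$ and $E_2$ are even-dimensional spin$^c$ is essential for the orientability of the cobordism. Once Step 2 is in place, the $K$-theoretic extension of Step 3 is essentially formal.
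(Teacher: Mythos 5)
Your proposal is correct and follows essentially the same route as the paper's proof: both reduce to a fiberwise spin$^c$-bordism between $S^{2n_1}\times S^{2n_2}$ and $S^{2(n_1+n_2)}$ embedded in an annulus (the paper implements the globalization via the associated bundle $P\times_G W_0$ of a $G=Spin^c(2n_1)\times Spin^c(2n_2)$-equivariant bordism $W_0$ carried by the spin$^c$-frame bundle $P=P_1\times_W P_2$, which is a precise version of your ``sub-bundle of $E_1\oplus E_2\oplus\mathbf{1}$'' picture), restrict a global Bott class so that its boundary values reproduce $\beta_{E_1\oplus E_2}$ on one end and $q^*\beta_{E_1}\otimes\beta_{p^*E_2}$ on the other, and then tensor with $\pi^*\xi$ to obtain the relative $K$-theory data on the bordism. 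One small point to tighten: since $W$ itself has boundary, the class lives in $K^0(Z,\partial P\times_G W_0;\mu_\mathcal{L})$ (relative to the part of $\partial Z$ lying over $\partial W$, after straightening the angle), not in $K^0(Z,\partial Z;\mu_\mathcal{L})$.
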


\begin{proof}
We can assume that $W$ is connected, so the ranks of $E_1$ and $E_2$ are constant; let us suppose that they are $2n_1$ and $2n_2$  respectively. We define $G_1:=Spin^c(2n_1)$, $G_2:=Spin^c(2n_2)$ and $G:=G_1\times G_2$. We will start by considering the case $W=pt$. Following the proof of \cite[Lemma $4.4.2$]{Rav}, we can construct a $G$-equivariant smooth embedding $S^{2n_1}\times S^{2n_2}\hookrightarrow \mathbbm{R}^{2n_1+2n_2+1}$ whose image lies inside an annulus $B_R\setminus B_\epsilon \subseteq \mathbbm{R}^{2n_1+2n_2+1}$. We let $W_0$ be the domain lying between the image of $S^{2n_1}\times S^{2n_2}$ and $S^{2(n_1+n_2)}=\partial B_R$. Define $\beta\in K^0_G(W_0)$ as the restriction of the Bott class in $K^0_G(B_R\setminus B_\epsilon)$ to $W_0$.  A straight-forward computation shows that $\beta|_{S^{2n_1}\times S^{2n_2}}=\beta_{2n_1}\otimes_\C\beta_{2n_2}\in K^0_G(S^{2n_1}\times S^{2n_2})$, the external product of the Bott classes, and $\beta|_{S^{2(n_1+n_2)}}=\beta_{2(n_1+n_2)}\in K^0_G(S^{2(n_1+n_2)})$-- the Bott class on $S^{2(n_1+n_2)}$. In particular, $(W_0,\beta)$ is a $G$-equivariant bordism $(S^{2n_1}\times S^{2n_2},\beta_{2n_1}\otimes_\C \beta_{2n_2})\sim_{bor}(S^{2(n_1+n_2)},\beta_{2(n_1+n_2)})$.

Returning to the general case, let $P_1\to W$ and $P_2\to W$ denote the spin$^c$-frame bundles of $E_1$ respective $E_2$ (i.e. $P_i\to W$ satisfies $S(E_i\oplus 1)=P_i\times_{G_i}S^{2n_i}$). We define the $G$-bundle $P:=P_1\times_WP_2\to W$ and
\[Z:=P\times_G W_0.\]
Using the ``straightening the angle"-technique (see \cite[Lemma $4.1.9$]{Rav}), we can assume that $Z$ is a smooth manifold with boundary. We note that 
$$\partial Z=\partial P\times_GW_0\cup_{\partial P\times_G \partial W_0} P\times_G \partial W_0.$$ 
Furthermore,
$$P\times_G \partial W_0=P\times_G S^{2(n_1+n_2)}\dot{\cup} (-P\times_G (S^{2n_1}\times S^{n_2}))=W^{E_1 \oplus E_2}\dot{\cup}(W^{E_1})^{p^*(E_2)}.$$
If we let $\pi:Z\to W$ and $\pi_0:Z\to W_0$ denote the projections, $\pi^*\xi\otimes \pi_0^*\beta\in K^0(Z,\partial P\times_GW_0;\mu_\mathcal{L})$ is well defined and
\begin{align*}
\partial((Z,W^{E_1 \oplus E_2}\dot{\cup}(W^{E_1})^{p^*(E_2)}),&[\pi^*\xi\otimes \pi_0^*\beta],f\circ \pi|_{\partial P\times_GW_0})=\\
&=(W,[\xi],f)^{E_1 \oplus E_2}\dot{\cup}((-W,[\xi],f)^{E_1})^{p^*(E_2)}.
\end{align*}
\end{proof}

The proof of the next proposition is similar to the proof of \cite[Proposition 4.17]{DeeRZ} (also see \cite[Section 4.5]{Rav}); the proof is left to the reader.

\begin{prop}
The relation of normal bordism of cycles is an equivalence relation.  Moreover, it is equal to the relation constructed in Definition \ref{equCyc}. 
\end{prop}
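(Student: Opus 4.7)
The strategy mirrors the classical Baum-Douglas argument, as in \cite[Section 4.5]{Rav} and \cite[Proposition 4.17]{DeeRZ}, built entirely on Lemmas \ref{norStaIso} and \ref{twoVecMod}. Reflexivity of $\sim_{nor}$ is immediate by choosing the same normal bundle on both sides, and symmetry follows from that of $\sim_{bor}$. The first substantive step is transitivity: given $(W_1,[\xi_1],f_1)^{N_1}\sim_{bor}(W_2,[\xi_2],f_2)^{N_2}$ and $(W_2,[\xi_2],f_2)^{N_2'}\sim_{bor}(W_3,[\xi_3],f_3)^{N_3}$, Lemma \ref{norStaIso} yields trivial bundles $T,T'$ with $N_2\oplus T\cong N_2'\oplus T'$. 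Modifying each bordism by pulling back $T$, respectively $T'$ (a canonical operation since trivial bundles extend across any bordism), and then invoking Lemma \ref{twoVecMod} to repackage the iterated modifications as single modifications by $N_i\oplus T$ or $N_i\oplus T'$, one obtains a chain of bordisms from $(W_1)^{N_1\oplus T}$ through $(W_2)^{N_2\oplus T}\cong (W_2)^{N_2'\oplus T'}$ to $(W_3)^{N_3\oplus T'}$. The stabilized bundles remain normal, so $(W_1,[\xi_1],f_1)\sim_{nor}(W_3,[\xi_3],f_3)$.

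Next I would identify $\sim_{nor}$ with the equivalence relation $\sim$ of Definition \ref{equCyc}. The inclusion $\sim_{nor}\,\subseteq\,\sim$ is immediate, since a normal bordism consists of two vector bundle modifications composed with an intermediate bordism, and each of these is already a generator of $\sim$. For the converse, it suffices to embed each of the two generating relations of $\sim$ inside $\sim_{nor}$. An ordinary bordism $Z$ between $W$ and $W'$ admits, via a neat embedding into a half-space, a normal bundle $N_Z$ whose restrictions (after absorbing the trivial collar line) produce normal bundles $N_W$ and $N_{W'}$; the modified manifold $Z^{N_Z}$, with bundle data extended across $Z$, then witnesses $(W,[\xi],f)^{N_W}\sim_{bor}(W',[\xi'],f')^{N_{W'}}$. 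For the vector bundle modification relation $(W,[\xi],f)\sim (W,[\xi],f)^V$, one picks a normal bundle $N_W$ of $W$ together with a normal bundle $N_{W^V}$ of $W^V$, uses Lemma \ref{twoVecMod} to identify $(W,[\xi],f)^{N_W\oplus V}$ with $((W,[\xi],f)^V)^{\pi^*N_W}$ up to bordism, and then absorbs the discrepancy between $\pi^*N_W$ and $N_{W^V}$ via trivial stabilizations supplied by Lemma \ref{norStaIso}.

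The main technical obstacle is the bundle arithmetic of this last step: the pullback $\pi^*N_W$ is not itself a normal bundle of $W^V$, because the vertical tangent bundle of $\pi\colon W^V\to W$ satisfies $T^{\mathrm{vert}}\oplus 1\cong\pi^*V\oplus 1$, so an honest normal bundle of $W^V$ differs from $\pi^*N_W$ by a contribution involving $\pi^*V$ together with some trivial summands. Tracking these trivial stabilizations carefully through the iterated modifications is where the bookkeeping of the proof lives, but since Lemmas \ref{norStaIso} and \ref{twoVecMod} together supply exactly the cancellation properties required, no further ingredients are needed.
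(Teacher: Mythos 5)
Your argument is correct and follows essentially the same route the paper refers the reader to (namely \cite[Proposition 4.17]{DeeRZ} and \cite[Section 4.5]{Rav}): reflexivity and symmetry are immediate, transitivity is handled by stabilizing the two normal bundles on the middle manifold via Lemma \ref{norStaIso} and repackaging iterated modifications with Lemma \ref{twoVecMod}, and the identification of $\sim_{nor}$ with $\sim$ comes from restricting a normal bundle of a bordism manifold to the boundary in one direction and from the $T^{\mathrm{vert}}\oplus \mathbf{1}\cong\pi^*V\oplus\mathbf{1}$ identity plus Lemma \ref{twoVecMod} in the other. The one place to be slightly more explicit than you are is the final ``bookkeeping'' step: a clean way to phrase it is to start from a normal bundle $N_W$ of $W$, pick a complement $V'$ with $V\oplus V'$ trivial, observe that $N_W\oplus V\oplus V'$ is again a normal bundle of $W$ while $\pi^*(N_W\oplus V')$ is a normal bundle of $W^V$ (since $TW^V\oplus\mathbf{1}\cong\pi^*TW\oplus\pi^*V\oplus\mathbf{1}$), and then Lemma \ref{twoVecMod} with $E_1=V$, $E_2=N_W\oplus V'$ gives directly $(W,[\xi],f)^{N_W\oplus V\oplus V'}\sim_{bor}\big((W,[\xi],f)^V\big)^{\pi^*(N_W\oplus V')}$, exhibiting the normal bordism without any further appeal to Lemma \ref{norStaIso}.
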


\begin{cor} 
\label{norBorToTrivial}
A cycle (of the form $(W,[\xi],f)$) in $\mathcal{S}^{geo}_*(X;\mathcal{L})$ is trivial if and only if it normally bounds.
\end{cor}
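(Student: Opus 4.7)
The plan is to deduce the corollary directly from the preceding proposition, which identifies $\sim_{nor}$ with the equivalence relation defining $\mathcal{S}_*^{geo}(X;\mathcal{L})$. So the proof reduces to unpacking what ``normally bordant to the empty cycle'' means.

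First I would argue the forward direction. Suppose $(W,[\xi],f)$ is trivial in $\mathcal{S}^{geo}_*(X;\mathcal{L})$. By definition this means $(W,[\xi],f)$ is equivalent to the empty cycle under the relation generated by bordism and vector bundle modification. By the previous proposition, that relation agrees with normal bordism, so there exist normal bundles $N_W$ over $W$ and $N_\emptyset$ over the empty cycle such that
\[(W,[\xi],f)^{N_W} \sim_{bor} \emptyset^{N_\emptyset}.\]
But a vector bundle modification of the empty cycle is still empty, so $(W,[\xi],f)^{N_W}$ bounds, i.e.\ $(W,[\xi],f)$ normally bounds.

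Conversely, suppose $(W,[\xi],f)$ normally bounds, so some modification $(W,[\xi],f)^{N_W}$ is the boundary of a bordism. Then $(W,[\xi],f)^{N_W} \sim_{bor} \emptyset$, and since vector bundle modification is one of the relations defining $\mathcal{S}_*^{geo}(X;\mathcal{L})$, we have $(W,[\xi],f)\sim (W,[\xi],f)^{N_W} \sim 0$ in $\mathcal{S}_*^{geo}(X;\mathcal{L})$. The one subtlety is that vector bundle modification in Definition \ref{equCyc} requires an even-rank spin$^c$-bundle; but a normal bundle can always be stabilized by a trivial spin$^c$-bundle of the appropriate rank and parity (using Lemma \ref{norStaIso} together with Lemma \ref{twoVecMod} to absorb the trivial summand), so this causes no obstruction.

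The proof is essentially a bookkeeping exercise once the previous proposition is granted, so I do not anticipate a genuine obstacle; the only care needed is in handling the parity convention for vector bundle modification, which is routine.
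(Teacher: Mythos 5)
Your proof is correct and matches the paper's (implicit) approach: the corollary is simply the specialization of the preceding proposition to the case where one side of the normal bordism is the empty cycle, and you have unpacked this cleanly, including the routine parity/stabilization point for vector bundle modification. The paper offers no separate argument beyond citing the proposition, so there is nothing further to compare.
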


\subsection{Six-term exact sequence}

\begin{theorem} 
\label{sixTerExtSeq}
Assume that $X$ is a locally compact Hausdorff space, $B$ is a unital Banach algebra and $\mathcal{L}\to X$ is a locally trivial bundle of finitely generated projective $B$-modules. The following sequence is exact
\begin{equation}
\label{thesequence}
\begin{CD}
K_0^{geo}(X) @>\mu_\mathcal{L}>> K_0^{geo}(pt;B) @>r>>  \mathcal{S}^{geo}_0(X;\mathcal{L})\\
@AA\delta A @. @VV\delta V \\
 \mathcal{S}^{geo}_1(X;\mathcal{L}) @<r<<  K_1^{geo}(pt;B) @<\mu_\mathcal{L}<< K_1^{geo}(X) 
\end{CD}
\end{equation}
where $\mu_\mathcal{L}$, $r$ and $\delta$ is as in Proposition \ref{sixTerExtSeqPart1}.
\end{theorem}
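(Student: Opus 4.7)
Proposition \ref{sixTerExtSeqPart1} shows that the sequence \eqref{thesequence} is a complex, so it remains to establish $\ker\subseteq\operatorname{im}$ at each of the six positions. By the $\Z/2$-symmetry, it suffices to verify exactness at one position of each of the three types: $K_*^{geo}(X)$, $K_*^{geo}(pt;B)$, and $\mathcal{S}_*^{geo}(X;\mathcal{L})$. The unifying strategy is to use Corollary \ref{norBorToTrivial} (and the analogous normal bordism formulation for Baum-Douglas $K$-homology) to convert algebraic vanishing into a concrete null-bordism after vector bundle modification by a normal bundle, and then to reassemble that null-bordism as an explicit preimage.

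For exactness at $K_*^{geo}(X)$, given $(N,E_\C,f)$ with $\mu_\mathcal{L}[(N,E_\C,f)]=0$ in $K_*^{geo}(pt;B)$, a Baum-Douglas null-bordism provides (after vector bundle modification) a compact spin$^c$-manifold $W$ with $\partial W=N$ and a $B$-bundle $\mathcal{E}_B\to W$ together with an isomorphism $\alpha:\mathcal{E}_B|_{\partial W}\xrightarrow{\sim} E_\C\otimes f^*\mathcal{L}$; the cycle $(W,(\mathcal{E}_B,0,E_\C,0,\alpha),f)$ in $\mathcal{S}_*^{geo}(X;\mathcal{L})$ then maps under $\delta$ to $(N,E_\C,f)$. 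For exactness at $K_*^{geo}(pt;B)$, given $(M,\mathcal{E}_B)$ with $r[(M,\mathcal{E}_B)]=0$, Corollary \ref{norBorToTrivial} produces a bordism $((Z,M^{N_M}),(\mathcal{F}_B,\mathcal{F}_B',E_\C,E_\C',\beta),g)$ of cycles in $\mathcal{S}^{geo}$; since $M^{N_M}$ has empty boundary, the complement $N:=\partial Z\setminus\operatorname{int}(M^{N_M})$ is also closed. Viewing $Z$ as a bordism in $K^{geo}_*(pt;B)$ and using $\beta$ on $N$ to identify $[\mathcal{F}_B|_N]-[\mathcal{F}_B'|_N]$ with $[(E_\C-E_\C')\otimes g^*\mathcal{L}]$ then yields $[(M,\mathcal{E}_B)]=\mu_\mathcal{L}\bigl([(N,E_\C',g)]-[(N,E_\C,g)]\bigr)$.

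The main obstacle is exactness at $\mathcal{S}_*^{geo}(X;\mathcal{L})$. Given $(W,[\xi],f)$ with $[\xi]$ represented by $(\mathcal{E}_B,\mathcal{E}_B',E_\C,E_\C',\alpha)$ and $\delta=0$, the vanishing of $(\partial W,E_\C,f)-(\partial W,E_\C',f)$ in $K_{*-1}^{geo}(X)$ provides (after vector bundle modification) a compact spin$^c$-manifold $V$ with $\partial V=\partial W\sqcup -\partial W$, a vector bundle $F_\C\to V$ restricting to $E_\C$ and $E_\C'$ on the two ends, and a map $\tilde f:V\to X$ extending $f\sqcup f$. I would form the closed spin$^c$-manifold $M:=W\cup_{\partial W} V\cup_{\partial W}(-W)$ and equip it with a $B$-bundle $\mathcal{A}_B$ assembled from $\mathcal{E}_B$ on $W$, $F_\C\otimes\tilde f^*\mathcal{L}$ on $V$, and $\mathcal{E}_B'$ on $-W$, using $\alpha$ (after stabilizing by complementary bundles as in Proposition \ref{degeneratestabilization} and Remark \ref{complicatedremark}) as the clutching data across the two copies of $\partial W$. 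I would then exhibit an explicit bordism in $\mathcal{S}_*^{geo}(X;\mathcal{L})$ from $(W,[\xi],f)$ to $r[(M,\mathcal{A}_B)]$ whose underlying $(\dim W+1)$-manifold is $(W\cup_{\partial W} V)\times[0,1]$ with corners straightened in the spirit of \cite[Section 4]{DeeRZ}; the $B$-bundle data is extended cylindrically in the $[0,1]$-direction, and the interpolating isomorphism on the free side $\partial W\times[0,1]$ is chosen to match $\alpha$ at one end and the trivial isomorphism (coming from the image of $r$) at the other. The principal technical difficulty is the coherent bundle stabilization required to make both the clutching on $M$ and the interpolation in the bordism well-defined, and this is where the relative $K$-theory machinery from Subsection \ref{subsectionmodelforrelativektheory} enters in a decisive way.
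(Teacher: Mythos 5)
Your overall strategy is the same as the paper's: show the sequence is a complex via Proposition \ref{sixTerExtSeqPart1}, then use Corollary \ref{norBorToTrivial} (normal bordism) to convert vanishing into concrete null-bordisms, and reassemble. Your arguments for $\ker(\mu_\mathcal{L})\subseteq\operatorname{im}(\delta)$ and $\ker(r)\subseteq\operatorname{im}(\mu_\mathcal{L})$ match the paper's in all essentials (the paper phrases them with $K$-theory data, $[\xi]$ a preimage of $([\xi_B],[\eta\otimes\beta_N])$ under the sequence \eqref{KthExtSeq}, rather than with explicit easy cocycles, but the content is the same; you should be careful that the null-bordism after vector bundle modification has boundary $N^V$ rather than $N$, but that is absorbed by the equivalence relation).

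The gap is in your treatment of $\ker(\delta)\subseteq\operatorname{im}(r)$. First, the claimed bordism manifold $(W\cup_{\partial W}V)\times[0,1]$ does not work: $W\cup_{\partial W}V$ is a manifold with nonempty boundary $-\partial W$, so after straightening the angle its cylinder has boundary the double of $W\cup V$ along $-\partial W$, namely $W\cup V\cup(-V)\cup(-W)$; this contains an extra copy of $-V$ and is not $W\,\dot\cup\,(-M)$. The correct bordism manifold (as in the paper) is $M\times[0,1]$ (the paper's $Z\times[0,1]$ with $Z$ closed), with regular domain $M\times\{0\}\,\dot\cup\,W\times\{1\}$. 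Second, and more fundamentally, the clutching of $\mathcal{E}_B$, $F_\C\otimes\tilde f^*\mathcal{L}$ and $\mathcal{E}_B'$ into a single $B$-bundle $\mathcal{A}_B$ over $M$ requires two separate bundle isomorphisms along the two copies of $\partial W$, whereas the datum $\alpha$ in the cocycle $\xi$ is a single isomorphism of the \emph{sums} $\mathcal{E}_B|_{\partial W}\oplus(E'_\C\otimes f^*\mathcal{L})\cong\mathcal{E}'_B|_{\partial W}\oplus(E_\C\otimes f^*\mathcal{L})$. You flag this as the ``principal technical difficulty,'' but it is not merely a matter of stabilization: even after applying Proposition \ref{degeneratestabilization} one isomorphism cannot in general be split into two. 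The paper avoids the issue entirely by never forming an explicit glued bundle. Instead it works with a $K$-theory class $[\tilde\xi_B]$ on $Z=W\cup_{\partial W}Q$ (with $\partial Q=\partial W$, so $Z$ is closed), and obtains the required relative class $[\hat\xi]$ on $(Z\times[0,1],Z\times\{0\}\dot\cup W\times\{1\})$ as a preimage under the exact Mayer-Vietoris-type sequence of Proposition \ref{allexactnesseverwanted}. Replacing your explicit bundle construction with this use of exactness in relative $K$-theory is the missing idea that closes the gap.
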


The reader should recall the basic properties of $K^0(W,\partial W;\mu_\mathcal{L})$ discussed in Lemma \ref{relativekcycles} and Equation \eqref{KthExtSeq}. That the mappings are well defined and that the sequence \eqref{thesequence} is a complex follows from Proposition \ref{sixTerExtSeqPart1}. The proof will be completed upon showing 
$${\rm ker}(\mu_\mathcal{L}) \subseteq {\rm im}(\delta),\:  {\rm ker}(\delta)  \subseteq {\rm im}(r),\: {\rm ker}(r) \subseteq  {\rm im}(\mu_\mathcal{L}). $$
It is here that the notion of normal bordism is required. The proof is similar to the proof of \cite[Theorem 2.20]{Dee1}, \cite[Theorem 4.19]{DeeRZ}; it uses ideas from \cite{Jak} and \cite{Rav}.

\begin{proof}[Proof of Theorem \ref{sixTerExtSeq}]
To prove $ {\rm ker}(\delta)  \subseteq {\rm im}(r)$, let $(W,[\xi],f)$ be a cycle such that $(\partial W, [\xi_{\field{C}}],f)$ is trivial in $K_*^{geo}(X)$.  We begin by showing that, without loss of generality, $(\partial W, [\xi_{\field{C}}],f)$ can be assumed to be a boundary.  By \cite[Corollary 4.5.16]{Rav}, there exists normal bundle over $\partial W$ such that $(\partial W, [\xi_{\field{C}}],f)^N=\partial (Q,[\eta],g)$.  The normal bundle, $N$, may not extend to all of $W$.  However, using Lemma \ref{norStaIso}, we have that if $N_W$ is a normal bundle for $W$, then there exists trivial bundle $\epsilon$ over $W$ such that    
\begin{eqnarray*}
\partial (W,[\xi],f)^{N_W} & = & (\partial W, [\xi_{\field{C}}],f)^{N_W|_{\partial W}} \\
& = & (\partial W, [\xi_{\field{C}}],f)^{N\oplus \epsilon} \\
& \sim_{bor} & ((\partial W, [\xi_{\field{C}}],f)^N)^{p^*(\epsilon)} 
\end{eqnarray*}
This completes the first step in the proof; that is, without loss of generality, we can assume that given $(W,[\xi],f)$ in the kernel of $\delta$, $\delta(W,[\xi],f)=(\partial W, [\xi_{\field{C}}],f)$ is a boundary. 
\par
Let $(Q,[\eta],g)$ denote a bordism (in $K_*^{geo}(X)$) with boundary $(\partial W, [\xi_{\field{C}}],f)$.  Form a cycle in $K_*^{geo}(pt;B)$, by taking
$$(Z,[\tilde{\xi}_B]):= (W,[\xi_{B}]) \cup_{\partial W} (Q,[\eta \otimes g^*(\mathcal{L})])$$
The proof is completed by showing $r(Z,[\tilde{\xi}_B]) \sim (W,[\xi],f)$.  This fact follows from the following bordism
$$((Z\times [0,1], Z\times \{0\} \dot{\cup} W\times \{1\}),[\hat{\xi}],g)$$
where only the K-theory data, $[\hat{\xi}]$, requires additional explanation.  It is constructed as follows.  Let $\pi:Z\times [0,1] \rightarrow Z$ be the projection map.  The class $([\pi^*(\tilde{\xi}_B)],[\eta]) \in K^0(Z\times [0,1];B)\oplus K^0(Q)$ is in the kernel of the map $K^0(Z\times [0,1];B)\oplus K^0(Q) \rightarrow K^0(Q;B)$.  Exactness implies that this class has a preimage in $K^0(Z\times [0,1],Q;g^*\mu)$ and let $[\hat{\xi}]$ be any such preimage, see Proposition \ref{allexactnesseverwanted}.  
\par
The boundary of the bordism, $((Z\times [0,1], Z \dot{\cup} W),[\hat{\xi}],g)$ is given by 
$$(-Z,[\hat{\xi}|_{Z}],\emptyset)\dot{\cup}(W,[\hat{\xi}|_{W}],g|_{\partial W})=r(-(Z,[\tilde{\xi}_B]))\dot{\cup}(W,[\xi],f)$$
as required. 
\par
The case ${\rm ker}(\mu_\mathcal{L}) \subseteq {\rm im}(\delta)$ is considered next.  Let $(Q,[\eta],g)$ be a cycle in $K_*^{geo}(X)$ such that $(Q,[\eta\otimes g^*(\mathcal{L})])$ is trivial in $K_*^{geo}(pt;B)$.  There exists a normal bundle $N$ over $Q$ such that $(Q,[\eta\otimes g^*(\mathcal{L})])^N$ is a boundary.  Let $(W,[\xi_B])$ denote such a bordism.  Form a cycle in $\mathcal{S}^{geo}_*(X;\mathcal{L})$ by taking $(W,[\xi],g\circ \pi_N)$ where $\pi_N$ is the projection coming from the vector bundle modification by $N$ and the class $[\xi]$ is a preimage of the class $([\xi_B],[\eta\otimes \beta_N])$ in the exact sequence \eqref{KthExtSeq}.  The reader should note that $([\xi_B],[\eta])$ is by construction in the kernel of the map $K^0(W;B)\oplus K^0(Q)$.  Finally, 
$$\delta(W,[\xi],g)=(\partial W, [\eta\otimes \beta_N],g\circ \pi_N)=(Q,[\eta],g)^N\sim (Q,[\eta],g)$$
as required. 
\par
Finally, we show that ${\rm ker}(r) \subseteq  {\rm im}(\mu_\mathcal{L})$.  Let $(M,[\xi_B])$ be a cycle in $K_*^{geo}(pt;B)$ such that $r(M,[\xi_B])$ is trivial in $\mathcal{S}^{geo}_*(X;\mathcal{L})$.  By Corollary \ref{norBorToTrivial}, there exists a normal bundle, $N$, and a bordism in $\mathcal{S}^{geo}_*(X;\mathcal{L})$, $((W,M^N),[\tilde{\xi}],f)$, such that 
$$\partial ((W,M^N),[\tilde{\xi}],f)=(M,[\xi_B])^N.$$ 
Since $\partial M^N=\emptyset$, $\partial W - M^N$ also has empty boundary.  Hence $(\partial W - M^N,[\tilde{\xi}_{\field{C}}],f)$ is a cycle in $K_*^{geo}(X)$.  Moreover, $(W,[\tilde{\xi}_B])$ is a bordism in $K_*^{geo}(pt;B)$ between $\mu_X(\partial W - M^N,[\tilde{\xi}_{\field{C}}],f)$ and $(M,\xi_B)^N$.  This completes the proof.
\end{proof}

\begin{remark}
\label{orientedmodel}
We note that there is a version of $K$-homology modeled on orientable manifolds (see \cite{Kescont} or \cite{HReta}). A cycle in this theory for $X$ consists of $(M,S,g)$ where $M$ is an oriented closed manifold, $S\to M$ a complex Clifford bundle and $g:M\to X$ is continuous. The relations are analogous to the ones above, however vector bundle modification is carried out using oriented vector bundles. That the two models are isomorphic is proven in Section $2.3$ of \cite{Kescont}. As a consequence of Theorem \ref{sixTerExtSeq}, we can also model $\mathcal{S}_*^{geo}(X;\mathcal{L})$ using oriented manifolds. To be precise, a cycle in the oriented version is given by $(W,(\mathfrak{S}_B,\mathfrak{S}'_B,S_\C,S'_\C,\alpha),g)$ where $W$ is an oriented manifold with boundary, $\mathfrak{S}_B$ and $\mathfrak{S}'_B$ are Clifford $B$-modules on $W$, $S_\C$ and $S_\C'$ are Clifford modules on $\partial W$ and $g:\partial W\to X$ is a continuous mapping. The isomorphism $\alpha$ should form an isomorphism of certain Clifford $B$-modules on $\partial W$. 
\end{remark}

\section{Products}
Let $\Gamma_1$ and $\Gamma_2$ be two finitely generated discrete groups. In \cite{Seg}, a product $K_n(B\Gamma_1) \times \mathcal{S}^{ana}_{n^{\prime}}(\Gamma_2) \rightarrow \mathcal{S}^{ana}(\Gamma_1 \times \Gamma_2)$ is defined. We define the corresponding product in the context of our geometric model. For simplicity, we assume that both $B\Gamma_1$ and $B\Gamma_2$ are finite CW-complexes; the general situation follows from this special case using an inductive limit argument.
\par
Assume that $\mathcal{A}'(\Gamma_1)$, $\mathcal{A}''(\Gamma_2)$ and $\mathcal{A}(\Gamma_1\times \Gamma_2)$ are Banach algebra completions of $\C[\Gamma_1]$, $\C[\Gamma_2]$ respectively $\C[\Gamma_1\times \Gamma_2]$. We will make the assumption that 
\begin{equation}
\label{inclusioncondition}
\mathcal{A}'(\Gamma_1)\otimes^{alg} \mathcal{A}''(\Gamma_2)\subseteq \mathcal{A}(\Gamma_1\times \Gamma_2).
\end{equation}
This condition is satisfied for instance when $\mathcal{A}=\mathcal{A}'=\mathcal{A}''=\ell^1$ or $\mathcal{A}=\mathcal{A}'=\mathcal{A}''=C^*_{\bf full}$ or $C^*_{\bf red}$. Let $(M,F,g)$ be a cycle in $K_n(B\Gamma_1)$ and $(W,[\xi] ,f)$ be a cycle in $\mathcal{S}^{geo}_{n^{\prime}}(\Gamma_2,\mathcal{A}'')$. The reader can check that \eqref{inclusioncondition} guarantees that the mapping 
$$\Psi : K^0(B\Gamma_1) \times K^0(W,\partial W;\mu_{\mathcal{L}_{\mathcal{A}''(\Gamma_2)}}) \rightarrow K^0(M\times W, M\times \partial W; \mu_{\mathcal{L}_{\mathcal{A}(\Gamma_1\times \Gamma_2)}})$$ 
defined at the level of cocycles via 
$$(F, (\mathcal{E}, \mathcal{E}^{\prime},E,E^{\prime},\alpha)) \mapsto (F\otimes g^*(\mathcal{L}_{\mathcal{A}'(\Gamma_1)}) \boxtimes \mathcal{E}, F^{\prime}\otimes g^*(\mathcal{L}_{\mathcal{A}'(\Gamma_1)}) \boxtimes \mathcal{E}^{\prime}, F\boxtimes E, F\boxtimes E^{\prime}, \mathrm{id} \boxtimes \alpha)$$
is well-defined. We note that $\boxtimes$ is defined by taking the (exterior) tensor product of the pullbacks of bundles under the obvious projection maps.
\begin{define}
\label{pairingdefinition}
Let $(M,F,g)$ be a cycle in $K_n(B\Gamma_1)$ and $(W,[\xi],f)$ be a cycle in $\mathcal{S}^{geo}_{n^{\prime}}(\Gamma_2,\mathcal{A}'')$ and form
$$(M\times W, \Psi(F, [\xi]) ,f\times g)$$
We denote the induced pairing by 
\begin{equation}
\label{khomsurgpairing}
< \cdot, \cdot >:K_n(B\Gamma_1) \times \mathcal{S}^{geo}_{n^{\prime}}(\Gamma_2,\mathcal{A}'') \rightarrow \mathcal{S}^{geo}_{n+n'}(\Gamma_1 \times \Gamma_2,\mathcal{A}).
\end{equation}
\end{define}

\begin{prop}
\label{productprop}
The pairing \eqref{khomsurgpairing} is well-defined.
\end{prop}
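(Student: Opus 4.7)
The plan is to verify that the map defined at the level of cycles descends to a well-defined map on equivalence classes. First I would check that $(M\times W, \Psi(F,[\xi]), f\times g)$ really is a cycle in $\mathcal{S}^{geo}_{n+n'}(\Gamma_1\times\Gamma_2,\mathcal{A})$. The product $M\times W$ is a compact spin$^c$-manifold with boundary $M\times \partial W$ (with the product spin$^c$-structure), and $f\times g:M\times \partial W\to B(\Gamma_1\times\Gamma_2)= B\Gamma_1\times B\Gamma_2$ is continuous. The hypothesis \eqref{inclusioncondition} allows us to identify $g^*\mathcal{L}_{\mathcal{A}'(\Gamma_1)}\boxtimes f^*\mathcal{L}_{\mathcal{A}''(\Gamma_2)}$, which a priori is an $\mathcal{A}'(\Gamma_1)\otimes^{alg}\mathcal{A}''(\Gamma_2)$-bundle, with the restriction of $(f\times g)^*\mathcal{L}_{\mathcal{A}(\Gamma_1\times\Gamma_2)}$ by tensoring up along the inclusion; this is precisely what makes $\Psi(F,[\xi])$ a well-defined cocycle for $K^0(M\times W,M\times \partial W;\mu_{\mathcal{L}_{\mathcal{A}(\Gamma_1\times\Gamma_2)}})$ and verifies condition (3') of Remark \ref{complicatedremark} after the obvious rewriting of $\alpha$.

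Next I would pass from cocycles to $K$-theory classes by invoking Proposition \ref{ktheoryincycles}. At the $K$-theory level, $\Psi$ becomes a bilinear operation given by external tensor product paired with the assembly twist, and bilinearity of the external product in $K$-theory shows immediately that the pairing respects disjoint union in both variables. This reduces the remaining work to checking invariance under bordism and vector bundle modification in each factor.

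For bordism in the first factor, given a Baum-Douglas bordism $(M',F',g')$ with boundary $(M,F,g)$, the product $(M'\times W, M\times W)$ is a manifold pair in which $M\times W$ is a regular domain of $M'\times W$ (with the remainder of $\partial(M'\times W)$ being $\partial M'\times W$, which is empty); pulling $\xi$ back along the projection $M'\times W\to W$ and pairing with $F'$ in the obvious way yields a bordism in $\mathcal{S}^{geo}$. Bordism in the second factor is handled analogously using a bordism $((Z,W),\xi,f)$; the product $(M\times Z, M\times W)$ is a bordism because regular domains are preserved under products with closed manifolds, and the boundary structure $\partial(M\times Z)-{\rm int}(M\times W)=M\times(\partial Z-{\rm int}(W))$ admits the required $K$-theory class from $\xi$. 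Vector bundle modification in the first variable $(M,F,g)\leadsto (M^V,\pi^*F\otimes\beta_V,g\circ\pi)$ yields the cycle $(M^V\times W,\ldots,(g\circ\pi)\times f)$, which by Lemma \ref{twoVecMod} (or directly, since $V$ lifts to a spin$^c$-bundle on $M\times W$ whose sphere bundle is $M^V\times W$) is bordant to the vector bundle modification of $(M\times W,\Psi(F,[\xi]),f\times g)$ by the pulled-back bundle; hence the classes agree in $\mathcal{S}^{geo}$. Vector bundle modification in the second variable is symmetric.

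The main obstacle is bookkeeping: keeping track of how the relative cocycle $\xi$, which encodes both the $\mathcal{A}''(\Gamma_2)$-bundle data on $W$ and the complex bundle data on $\partial W$, transforms under the external tensor product so that the compatibility isomorphism $\alpha$ continues to trivialize the appropriate restriction after tensoring with $g^*\mathcal{L}_{\mathcal{A}'(\Gamma_1)}\boxtimes-$. Once one verifies this compatibility cleanly using condition (3') of Remark \ref{complicatedremark}, all remaining verifications are essentially formal consequences of the associativity and functoriality of external tensor product, combined with the bordism and vector bundle modification arguments already appearing in the proofs of Proposition \ref{homotopypropforcycles}, Proposition \ref{ktheoryincycles}, and Lemma \ref{twoVecMod}.
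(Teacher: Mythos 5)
Your proof follows essentially the same route as the paper's: product the Baum--Douglas bordism with $W$ for bordism in the first variable, product $M$ with the $\mathcal{S}^{geo}$-bordism for the second, and observe that vector bundle modification on either factor pulls out to a vector bundle modification of the product cycle by the pulled-back bundle (the paper records this as the two displayed formulas involving $p_1^*V_M$ and $p_2^*V_W$, which is what your appeal to Lemma~\ref{twoVecMod} is establishing). The extra preliminary steps you include --- checking via condition~\eqref{inclusioncondition} that $\Psi$ lands in the right relative $K$-theory group, and reducing to $K$-theory classes via Proposition~\ref{ktheoryincycles} --- are left implicit in the paper but are consistent with it.

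One bookkeeping slip, though: in the first bordism case you state that, after exhibiting $M\times W$ as a regular domain of $\partial(M'\times W)$, the remainder is ``$\partial M'\times W$, which is empty.'' Since $\partial M' = M$, the set $\partial M'\times W$ is precisely the regular domain $M\times W$ itself, not its complement, and the actual remainder is $M'\times\partial W$, which is not empty when $\partial W\ne\emptyset$. This remainder is exactly where the bordism's reference map $\tilde{g}\times f$ and the complex vector bundle data carried by $\xi$ must live; in fact your own construction (restricting $\xi$ to $\partial W$ and pulling back along the projection $M'\times\partial W\to\partial W$) does supply that data correctly, so the error is confined to the parenthetical description and does not propagate. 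You should also note that ``straightening the angle'' is needed to turn $M'\times W$ into a genuine spin$^c$ manifold with boundary before the regular-domain language applies, as the paper points out.
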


\begin{proof}
It is clear that the pairing respects the disjoint union relation. For the bordism relation in $K_*^{geo}(B\Gamma_1)$, we suppose $(M,F,g)$ is the boundary of $(\tilde{M},\tilde{F},\tilde{g})$. Then, by applying the ``straightening the angle" technique to
$$(\tilde{M}\times W, \Psi(\tilde{F}, [\xi]),\tilde{g}\times f ),$$
we can produce a bordism which has boundary given by $<(M,F,g),(W,[\xi],f)>$. 
\par
For the bordism relation in $\mathcal{S}^{geo}_*(\Gamma_2,\mathcal{A}'')$, suppose that $(W,[\xi],f)$ is the boundary of the bordism $((Z,W),[\tilde{\xi}],\tilde{f})$. Then 
$$(M \times Z, M \times W, \Psi(F, [\tilde{\xi}]), g\times \tilde{f})$$
gives a bordism which has boundary $<(M,F,g),(W,[\xi],f)>$. 
\par
That the pairing respects vector bundle modification (in both groups) follows from
$$<(M,F,g)^{V_M},(W,[\xi],f)>= <(M,F,g),(W,[\xi],f)>^{p_1^*(V_M)}$$
and 
$$<(M,F,g),(W,[\xi],f)^{V_W}>=<(M,F,g),(W,[\xi],f)>^{p_2^*(V_W)}$$
where $V_M$ and $V_W$ are spin$^c$-vector bundles with even rank over $M$ and $W$ respectively and $p_1$ and $p_2$ are the projections from $M\times W$ to the first and (respectively) second factor.
\end{proof}

\section{Higher $\eta$-type invariants on $\mathcal{S}_*^{geo}(\Gamma,C_\epsilon^*)$}
\label{subsectionhigherrho}
In this section, we study $\eta$-type invariants on $\mathcal{S}^{geo}_*(\Gamma,C^*_\epsilon)$ in the case of $C^*$-completions. This construction is similar in spirit to the material in \cite{HReta} (also see \cite{PS}). The reason that we restrict to the $C^*$-algebraic framework is due to the fact that bivariant $K$-theory for $C^*$-algebras is more well understood and many more tools are available, e.g. techniques such as those applied in \cite{MeyNest}. It is not to be expected that these constructions restrict to the $C^*$-algebraic world; however a more explicit understanding of the involved invariants are needed for an analogous Banach algebraic construction. 

Let $C^*_\epsilon(\Gamma)$ denote a $C^*$-algebra completion of $\C[\Gamma]$. The reader is encouraged to keep the reduced or the universal $C^*$-algebra completion in mind. Using an inductive limit argument, we can, without loss of generality, assume that  $B\Gamma$ is a finite CW-complex. We use the notation $\mathcal{L}_\epsilon:=E\Gamma\times_\Gamma C^*_\epsilon(\Gamma)\to B\Gamma$ for the Mishchenko bundle constructed from the completion $C^*_\epsilon(\Gamma)$. Let $N$ denote a ${\rm II}_1$-factor and $\phi:\C\to N$ the inclusion of the unit. Given $C^*$-algebras, $A$ and $B$, let $E(A,B)$ denote the unitary isomorphism classes of Kasparov cycles (see \cite[Definition 2.2]{Kasp}). The input data required for our construction is the following:

\begin{define}
Let $D$ be a unital $C^*$-algebra and $\aleph_0=(E_1,E_2,\varphi)$ be a cocycle in $K_1(C(B\Gamma)\otimes D;\rz)$; that is, $E_1$ and $E_2$ are $D$-bundles over $B\Gamma$ and $\varphi: E_1\otimes_{\phi} N \rightarrow E_2\otimes_{\phi} N$ is a isomorphism of $D\otimes N$-bundles  (see \cite{Bas, Kar} for more on these cocycles). Also let $\phi_1$ and $\phi_2$ be $*$-homomorphisms, $\phi_i: C^*_\epsilon(\Gamma) \rightarrow D$. We assume the following compatiblity between the cocycle and the $*$-homomorphisms. For $i=1$ and $2$, let $h_i \in E(\field{C}, D\otimes C(B\Gamma)\otimes C([0,1]))$ defining equivalences (see \cite[Definition 2.3]{Kasp}) between the Kasparov cycles $(\Gamma(E_i),0) \in E(\field{C},D\otimes C(B\Gamma))$ and $(\Gamma(\mathcal{L}_{\epsilon} \otimes_{\phi_i} D),0) \in E(\field{C},D\otimes C(B\Gamma))$ respectively; the existence of such homotopies is an assumption. Summarizing, the input data is $\aleph=((E_1,E_2,\varphi),\phi_1,\phi_2,h_1,h_2)$.  \label{inputData}
\end{define}

\begin{ex}
\label{sigmaonetwoexample}
The prototypical example of such data comes from the theory of finite dimensional unitary representations. In this example, we must assume that we are working with the full group $C^*$-algebra. Let $\sigma_1$ and $\sigma_2$ be finite dimensional unitary representation of the group $\Gamma$ of rank $k$. In this case, $D=M_k(\field{C})$. The bundles, $E_1$ and $E_2$, are not the flat vector bundles usually considered in such constructions; in our case, the relevant bundles are (by definition)
$$E_i = E\Gamma \times_{\sigma_i} M_k(\field{C}) $$
These bundles have matrix fibers, but the construction is equivalent to the standard construction of flat vector bundle (i.e., $E\Gamma \times_{\sigma_i} \field{C}^k$). Furthermore, $E_1$ and $E_2$ are flat and of the same rank. As such, they are isomorphic after tensoring over the ${\rm II}_1$-factor. The two representations induce $*$-homomorphisms and natural choices for the homotopies (i.e., $h_1$ and $h_2$) follow from \cite[Lemma 4.2]{RWY}.
\end{ex}

One can generalize the previous example to a pair unitary representations of the group into the unitary group of a unital $C^*$-algebra. In this case, the existence of an isomorphism $\varphi$ is an assumption on the particular representations. Further, one can also consider almost representations (see \cite{Dal, MM} for more on these objects).
\par
From a given choice of input data $\aleph=((E_1,E_2,\varphi),\phi_1,\phi_2,h_1,h_2)$, we construct a map $\ind^\mathbb{R}_\aleph:\mathcal{S}^{geo}_*(\Gamma,C^*_\epsilon) \rightarrow K_*^{geo}(pt;D\otimes N)$ which fits into a certain exact sequence (see Theorem \ref{exactSeqForRmap}). Before constructing this map, we have the following proposition which, like the previous example, motivates the definition of the input data (see Definition \ref{inputData}). If $\aleph_0=(E_1,E_2,\varphi)$ is as in Definition \ref{inputData}, we define 
\begin{equation}
\label{rmodzforaleph}
\ind_{\aleph_0}^{\rz}:=<(E_1,E_2,\varphi),  \cdot >:K_*^{geo}(B\Gamma)\to K_{*+1}(D;\rz),
\end{equation}
where $<(E_1,E_2,\varphi),  \cdot >$ denotes the pairing between $K_1(C(B\Gamma)\otimes D;\rz)$ and $K^*(C(B\Gamma))$ under the isomorphism $K^*(C(B\Gamma))\cong K_*^{geo}(B\Gamma)$ (see for example \cite[Section 6]{DeeRZ}).

\begin{prop} \label{BCandBockMapComm}
We use the notation of the previous paragraphs (in particular, $\aleph$ and $\aleph_0$ are as in Definition \ref{inputData}). Then, the following diagram commutes
\begin{center}
$\begin{CD}
K_*^{geo}(B\Gamma) @>\mu_\epsilon>> K_*^{geo}(pt;C^*_\epsilon(\Gamma))   \\
 @V \ind_{\aleph_0}^{\rz} VV  @VV (\phi_1-\phi_2)_* V  \\
  K_{*+1}(D;\rz) @>\delta>> K_*^{geo}(pt;D) 
\end{CD}$
\end{center}
where $\delta$ is the map in the Bockstein sequence associated to $K$-theory with coefficients in $\rz$.
\end{prop}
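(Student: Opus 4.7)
The plan is to chase a generic cycle $(M,F,g)$ for $K_*^{geo}(B\Gamma)$ around both sides of the square and verify that the two resulting elements of $K_*^{geo}(pt;D)$ agree. Unpacking the definition of $\mu_\epsilon$ from Section \ref{section2.a} and the covariant functoriality of $K_*^{geo}(pt;\cdot)$ under the $*$-homomorphisms $\phi_1,\phi_2$, the top-right composition sends $(M,F,g)$ to
\[
\bigl[M,\,F\otimes g^*\mathcal{L}_\epsilon\otimes_{\phi_1}\!D\bigr] - \bigl[M,\,F\otimes g^*\mathcal{L}_\epsilon\otimes_{\phi_2}\!D\bigr] \in K_*^{geo}(pt;D).
\]

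For the bottom-left composition, I would first give an explicit cycle-level formula for $\ind^{\rz}_{\aleph_0}$. Using the model of \cite{Bas,Kar} for $K$-theory with $\rz$-coefficients by triples of the form (bundle, bundle, isomorphism after tensoring with the ${\rm II}_1$-factor), the pairing \eqref{rmodzforaleph} pulls the cocycle $(E_1,E_2,\varphi)$ back along $g$ and tensors with $F$, producing
\[
\ind^{\rz}_{\aleph_0}(M,F,g) = \bigl[M,\,F\otimes g^*E_1,\,F\otimes g^*E_2,\,\mathrm{id}_F\otimes g^*\varphi\bigr]\in K_{*+1}(D;\rz).
\]
On triples of this form the Bockstein $\delta$ associated with $0\to\Z\to\R\to\rz\to 0$ is given by the formal difference of the underlying $D$-bundle classes, hence
\[
\delta\circ\ind^{\rz}_{\aleph_0}(M,F,g) = \bigl[M,\,F\otimes g^*E_1\bigr] - \bigl[M,\,F\otimes g^*E_2\bigr].
\]

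To match the two routes, I would invoke the homotopy data $h_1,h_2$. Each $h_i\in E(\C, D\otimes C(B\Gamma)\otimes C([0,1]))$ realizes an operator homotopy between the Kasparov cycles $(\Gamma(E_i),0)$ and $(\Gamma(\mathcal{L}_\epsilon\otimes_{\phi_i}\!D),0)$, hence yields an equality $[E_i]=[\mathcal{L}_\epsilon\otimes_{\phi_i}\!D]$ in $K^0(B\Gamma;D)\cong KK(\C,D\otimes C(B\Gamma))$. Pulling back along $g$ and tensoring with $F$, this gives equalities of bundle classes on $M$, and passing to the corresponding geometric cycle classes (via the topological index of Remark \ref{anassem}) produces
\[
[M,\,F\otimes g^*E_i] = [M,\,F\otimes g^*\mathcal{L}_\epsilon\otimes_{\phi_i}\!D]\in K_*^{geo}(pt;D),\qquad i=1,2.
\]
Subtracting these two identities closes the square.

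The main obstacle is justifying the cycle-level descriptions above, namely that the abstract pairing $\langle(E_1,E_2,\varphi),\,\cdot\,\rangle$ and the Bockstein on $K_{*+1}(D;\rz)$ are faithfully represented as stated. This is largely a matter of carefully unraveling the mod-$\rz$ $K$-theory pairing in \cite[Section 6]{DeeRZ} and \cite{Bas,Kar}; once those bookkeeping matters are settled, the commutativity of the square is a direct consequence of the homotopies $h_i$ built into the input data $\aleph$.
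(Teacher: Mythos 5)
Your proposal follows essentially the same route as the paper's proof: chase a generic cycle $(M,F,g)$ around both sides of the square, unwind the Bockstein and the $\rz$-pairing at the cycle level (as in \cite{DeeRZ}), and then use the homotopies $h_1,h_2$ in $\aleph$ to identify $[M,\,F\otimes g^*E_i]$ with $[M,\,F\otimes g^*\mathcal{L}_\epsilon\otimes_{\phi_i}D]$. Your write-up is a bit more explicit about the intermediate step (writing out $\ind^{\rz}_{\aleph_0}(M,F,g)$ as a triple before applying $\delta$), but this is the same argument.
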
 
\begin{proof}
Before beginning the proof, we note that the maps in the diagram are defined at the level of geometric cycles. On the one hand, for the assembly map and $\phi_1-\phi_2$ this is clear. On the other hand, the definition of Bockstein map and pairing at the level of geometric cycles can be found in \cite{DeeRZ}. 
\par
Let $(M,E,f)$ be a cycle in $K_*^{geo}(B\Gamma)$. Then, based on \cite{DeeRZ}, we have that 
$$\delta( < (E_1,E_2,\varphi),  (M,E,f)>)  =  (M,[E\otimes_{\field{C}} f^*(E_1)]-[E\otimes_{\field{C}} f^*(E_2)])$$
and
\begin{eqnarray*}
(\phi_1-\phi_2)(\mu(M,E,f)) & = & (\phi_1-\phi_2)(M,E\otimes_{\field{C}} f^*(\mathcal{L}_{\epsilon})) \\
& = & (M,[E \otimes_{\field{C}} f^*(\mathcal{L}_{\epsilon})\otimes_{\phi_1}D]-[E \otimes_{\field{C}} f^*(\mathcal{L}_{\epsilon})\otimes_{\phi_2}D]) \\
& = & (M,[E \otimes_{\field{C}} f^*(\mathcal{L}_{\epsilon}\otimes_{\phi_1}D)]-[E \otimes_{\field{C}} f^*(\mathcal{L}_{\epsilon}\otimes_{\phi_2}D)])
\end{eqnarray*}
That these two cycles give the same class in $K_*^{geo}(pt;D)$ follows from the existence of the homotopies $h_1$ and $h_2$.
\end{proof}
The next proposition requires us to fix the following notation. Let $D$ be a unital $C^*$-algebra, $N$ a ${\rm II}_1$-factor, and $\phi:D \rightarrow D\otimes N$ be the unital $*$-homomorphism given by $a \mapsto a\otimes I$. Also let $\Gamma$ be a discrete group, $W$ a compact spin$^c$-manifold with boundary, $f:\partial W \rightarrow B\Gamma$ a continuous map, and $\aleph=((E_1,E_2,\varphi),\phi_1,\phi_2,h_1,h_2)$ be a choice of input data as in Definition \ref{inputData}. We will require the following maps between $K$-theory groups:
\begin{enumerate}
\item $\sigma_{C^*_\epsilon(\Gamma)}: K^0(W;C^*_\epsilon(\Gamma)) \oplus K^0(\partial W) \rightarrow K^*(\partial W;C^*_\epsilon(\Gamma))$ is the map induced from the map (defined from bundles to $K$-theory classes) defined via
$$(\mathcal{E}_{C^*_\epsilon(\Gamma)}, E ) \mapsto [\mathcal{E}_{C^*_\epsilon(\Gamma)}|_{\partial W}]-[E\otimes_{\field{C}} f^*(\mathcal{L}_{\epsilon})] $$ 
\item Let $\pi:W\times [0,1] \rightarrow W$ be the projection map. Define 
\begin{align*}
\gamma: &K^0(W;C^*_\epsilon(\Gamma)) \oplus K^0(\partial W) \rightarrow\\
&\qquad\rightarrow K^0(W;D) \oplus K^0( \partial W \times [0,1], \partial W \dot{\cup} \partial W; \phi) \oplus K^0(W;D)
\end{align*}
to be the map induced from the map at the level of bundles defined via
\begin{align*}
 (&\mathcal{E}_{C^*_\epsilon(\Gamma)},E_{\field{C}}) \mapsto\\
  \bigg(\mathcal{E}_{C^*_\epsilon(\Gamma)}\otimes_{\phi_1}&D, \left(\pi^*(E)\otimes_{\field{C}} E_1 \otimes_{\phi} N, E\otimes_{\field{C}}E_1 \dot{\cup} E\otimes_{\field{C}}E_2, \mathrm{id} \dot{\cup}\mathrm{id}\otimes \varphi\right) ,\mathcal{E}_{C^*_\epsilon(\Gamma)}\otimes_{\phi_2}D\bigg)\end{align*}
\item Let $\psi : K^*(\partial W;C^*_\epsilon(\Gamma)) \rightarrow K^0(\partial W;D) \oplus K^0(\partial W;D)$ be the map induced from the map on bundles given by 
$$ \mathcal{E}_{C^*_\epsilon(\Gamma)} \mapsto (\mathcal{E}_{C^*_\epsilon(\Gamma)} \otimes_{\phi_1} D , \mathcal{E}_{C^*_\epsilon(\Gamma)} \otimes_{\phi_2} D)$$
\item Finally, we let 
\begin{align*}
\zeta: K^0(W;D) \oplus K^0( \partial W \times [0,1], \partial W \dot{\cup} \partial W; &\phi) \oplus K^0(W;D) \rightarrow\\
& K^0(\partial W;D) \oplus K^0(\partial W;D) 
\end{align*}
be the map induced from the map (from cocycles to $K$-theory classes) defined via
$$(F_1,(V_1,V_2,\varphi_V),F_3 ) \mapsto ([F_1|_{\partial W}] - [V_1] ,[F_3|_{\partial W}] - [V_2] )$$
\end{enumerate}
\begin{prop} \label{compDiaIsComm}
 We use the notation introduced in the discussion preceeding this proposition. Then, the following diagram is commutative:
\[
\small
\xymatrix@C=1em@R=2.71em{
 K^0(W;C^*_\epsilon(\Gamma)) \oplus K^0(\partial W)\ar[dd]^{\gamma} \ar[r]^{\sigma} & K^0(\partial W;C^*_\epsilon(\Gamma)) \ar[d]^{\psi} \\
    &K^0(\partial W;D) \oplus K^0(\partial W;D)  \\
     K^0(W;D) \oplus K^0( \partial W \times [0,1],\partial W \dot{\cup} \partial W;\phi) \oplus K^0(W;D)\ar[ur]^{\zeta} & } 
\normalsize
\]
\end{prop}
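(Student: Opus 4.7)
The plan is to verify commutativity by a direct bundle-level computation, since each of the four maps $\sigma$, $\psi$, $\gamma$, $\zeta$ is induced from an assignment on bundles. Evaluated on a generic pair $(\mathcal{E}_{C^*_\epsilon(\Gamma)},E)$, it suffices to check that the two compositions produce equal classes in $K^0(\partial W;D)\oplus K^0(\partial W;D)$.

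First I would trace the upper path $\psi\circ\sigma$, arriving at
\[
\bigl([\mathcal{E}_{C^*_\epsilon(\Gamma)}|_{\partial W}\otimes_{\phi_1}D]-[E\otimes_\C f^*\mathcal{L}_\epsilon\otimes_{\phi_1}D],\; [\mathcal{E}_{C^*_\epsilon(\Gamma)}|_{\partial W}\otimes_{\phi_2}D]-[E\otimes_\C f^*\mathcal{L}_\epsilon\otimes_{\phi_2}D]\bigr).
\]
Then I would trace $\zeta\circ\gamma$. The map $\gamma$ produces a triple whose middle slot is an easy relative cocycle with boundary pieces $E\otimes_\C f^*E_1$ and $E\otimes_\C f^*E_2$ joined by $\mathrm{id}\dot{\cup}\,\mathrm{id}\otimes\varphi$; applying $\zeta$ discards the bundle over $\partial W\times[0,1]$ and returns
\[
\bigl([\mathcal{E}_{C^*_\epsilon(\Gamma)}\otimes_{\phi_1}D|_{\partial W}]-[E\otimes_\C f^*E_1],\; [\mathcal{E}_{C^*_\epsilon(\Gamma)}\otimes_{\phi_2}D|_{\partial W}]-[E\otimes_\C f^*E_2]\bigr).
\]

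In each component the first summand is unchanged, because restriction commutes with the base change $\otimes_{\phi_i}D$. The hard step, and really the content of the proposition, is to identify the second summands, i.e.\ to prove $[E\otimes_\C f^*E_i]=[E\otimes_\C f^*(\mathcal{L}_\epsilon\otimes_{\phi_i}D)]$ in $K^0(\partial W;D)$ for $i=1,2$. This is exactly what the input homotopies $h_i\in E(\C,D\otimes C(B\Gamma)\otimes C([0,1]))$ supply: by definition, each $h_i$ gives an equivalence in Kasparov $KK$ between $(\Gamma(E_i),0)$ and $(\Gamma(\mathcal{L}_\epsilon\otimes_{\phi_i}D),0)$, so that $[E_i]=[\mathcal{L}_\epsilon\otimes_{\phi_i}D]$ in $K^0(B\Gamma;D)\cong K_0(D\otimes C(B\Gamma))$. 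Pulling back along $f:\partial W\to B\Gamma$ and tensoring with $[E]$ then yields the desired identity in $K^0(\partial W;D)$.

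A secondary check required along the way is that $\zeta$ is well-defined on the relative $K$-group, and independently that the $\gamma$-output is a legitimate cocycle in the sense of Definition \ref{relativektheorycycle}; the latter uses crucially that $\varphi:E_1\otimes_\phi N\xrightarrow{\sim}E_2\otimes_\phi N$ is an isomorphism of $D\otimes N$-bundles, while the former follows because both $[F_1|_{\partial W}]-[V_1]$ and $[F_3|_{\partial W}]-[V_2]$ depend only on the stable data of the cocycle. Once these are verified, the diagram chase above closes the proof.
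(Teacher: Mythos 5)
Your argument is correct and follows the same route as the paper: trace both compositions on a bundle pair, reduce to the identity $[E\otimes_\C f^*E_i]=[E\otimes_\C f^*(\mathcal{L}_\epsilon\otimes_{\phi_i}D)]$, and supply it from the homotopies $h_i$. Minor note: the paper's printed display swaps which composition produces $f^*E_i$ versus $f^*\mathcal{L}_\epsilon\otimes_{\phi_i}D$, but your tracing is the one that actually follows from the definitions of $\sigma$, $\psi$, $\gamma$, $\zeta$, and the conclusion is unaffected.
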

\begin{proof}
For simiplicity, we consider the relevant maps for bundles; the general case of difference classes follows from this one. Let $(\mathcal{E}_{C^*_\epsilon(\Gamma)},E_{\field{C}})$ be (respectively) a $C^*_\epsilon(\Gamma)$-bundle over $W$ and a vector bundle over $\partial W$; then, its image under $\psi \circ \sigma$ is 
$$\left([(\mathcal{E}_{C^*_\epsilon(\Gamma)}\otimes_{\phi_1} D)|_{\partial W}] - [E_{\field{C}} \otimes f^{*}(E_1) ],[\mathcal{E}_{C^*_\epsilon(\Gamma)}\otimes_{\phi_2}D)|_{\partial W}] - [E_{\field{C}} \otimes f^{*}(E_2)]\right)$$
While, its image under $\zeta \circ \gamma$ is 
$$ \left([\mathcal{E}_{C^*_\epsilon(\Gamma)}|_{\partial W}\otimes_{\phi_1} D] - [E_{\field{C}} \otimes f^*(\mathcal{L}_{\epsilon})\otimes_{\phi_1}D) ],[\mathcal{E}_{C^*_\epsilon(\Gamma)}|_{\partial W}\otimes_{\phi_2} D] - [E_{\field{C}} \otimes f^*(\mathcal{L}_\epsilon)\otimes_{\phi_2}D) \right)$$
The existence of the homotopies $h_1$ and $h_2$ implies that these two classes are equal.  
\end{proof}
In fact, during the proof of the proposition, we have proved more than just that the diagram commutes; we have constructed an explicit homotopy. Using this homotopy and methods in \cite{MeyNest}, we can complete the diagram:
\[
\scriptsize
\xymatrix@C=1em@R=2.71em{
K^0(W,\partial W;\mu_{\mathcal{L}_\epsilon})\ar[r] \ar[d]_{\beta}& K^0(W;C^*_\epsilon(\Gamma)) \oplus K^0(\partial W)\ar[dd]^{\gamma} \ar[r]^{\sigma} & K^0(\partial W;C^*_{\epsilon}(\Gamma)) \ar[d]^{\psi} \\
K^0(Z, W \dot{\cup} W;\phi)\ar[dr] &    &K^0(\partial W;D) \oplus K^0(\partial W;D)  \\
& K^0(W;D) \oplus K^0( \partial W \times [0,1],\partial W \dot{\cup} \partial W;\phi) \oplus K^0(W;D)\ar[ur]^{\zeta} & } 
\normalsize
\]
 where $Z=W\cup \partial W\times [0,1]\cup -W$. We denote the map completing the diagram by $\beta$. We also obtain a map from $K^0(W,\partial W;\mu_{\mathcal{L}_\epsilon})$ to $K^0(Z;D\otimes N)$ by taking the composition of $\beta$ with the map $K^0(Z, W \dot{\cup} W;\phi) \rightarrow K^0(Z;D\otimes N)$; we denote this map  by $\alpha$. For a cycle $(W,\xi,f)$ in $\mathcal{S}^{geo}(\Gamma,C^*_\epsilon)$, we define
\begin{equation} \label{defnOfind}
\ind_{\aleph}^\mathbbm{R} (W,\xi,f):= [Z, \alpha(\xi)]\in K_*^{geo}(pt;D\otimes N).
\end{equation}
\begin{prop}
The mapping $\ind_{\aleph}^\mathbbm{R}: \mathcal{S}^{geo}_*(\Gamma,C^*_\epsilon) \rightarrow K_*^{geo}(pt;D\otimes N)$ is well-defined.
\end{prop}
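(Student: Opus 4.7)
The plan is to verify that $\ind^\mathbb{R}_\aleph$ respects the three defining equivalence relations on $\mathcal{S}^{geo}_*(\Gamma,C^*_\epsilon)$: disjoint union, bordism, and vector bundle modification. Disjoint union is immediate since the construction of $Z$ and each of the maps $\sigma,\psi,\gamma,\zeta,\beta,\alpha$ is additive. Because $\ind^\mathbb{R}_\aleph$ is defined on cycles with $K$-theory data in the sense of Proposition \ref{ktheoryincycles}, invariance under the underlying cocycle-level homotopy and degenerate equivalences does not need to be checked separately.

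For bordism invariance, let $((Y,W),[\tilde\xi],\tilde f)$ be a bordism whose boundary is $(W,[\xi],f)$, and set $W^c := \overline{\partial Y \setminus W}$, so that $\tilde f:W^c\to B\Gamma$ extends $f$. Form the spin$^c$-manifold
\[
\tilde Z := Y \cup_{W^c} (W^c\times [0,1]) \cup_{W^c} (-Y),
\]
after straightening the angle; a direct check shows $\partial \tilde Z \cong Z$. The maps $\sigma,\psi,\gamma,\zeta$ in and around Proposition \ref{compDiaIsComm} are manifestly natural upon replacing $(W,\partial W,f)$ by $(Y,W^c,\tilde f)$: they are built from pullback along the classifying map, restriction to boundary, and the fixed input data in $\aleph$, which lives over $B\Gamma$ independently of the manifold. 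The homotopy witnessing the commutativity in Proposition \ref{compDiaIsComm} is induced from $h_1,h_2$ and is similarly natural, which lets the mapping-cone construction of $\beta$ (via \cite{MeyNest}) extend to $\tilde\beta: K^0(Y,W^c;\mu_{\mathcal{L}_\epsilon}) \to K^0(\tilde Z, Y\dot\cup Y;\phi)$ and hence to $\tilde\alpha[\tilde\xi]\in K^0(\tilde Z; D\otimes N)$ satisfying $\tilde\alpha[\tilde\xi]|_{\partial\tilde Z}=\alpha[\xi]$. The pair $(\tilde Z,\tilde\alpha[\tilde\xi])$ is then a bordism in $K_*^{geo}(pt;D\otimes N)$ with boundary $(Z,\alpha[\xi])$.

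For vector bundle modification, let $V\to W$ be an even-rank spin$^c$-vector bundle and let $\tilde V\to Z$ denote its canonical extension obtained by gluing two copies of $V$ across the cylinder $V|_{\partial W}\times[0,1]$. Then $Z^{\tilde V} = S(\tilde V\oplus \mathbf 1)$ is canonically diffeomorphic as a spin$^c$-manifold to the doubled manifold $Z'$ built from $W^V$, and the Bott class $\beta_{\tilde V}\in K^0(\tilde V)\subseteq K^0(Z^{\tilde V})$ restricts to $\beta_V$ over each hemisphere. Naturality of $\alpha$ under pullback along $\pi:Z^{\tilde V}\to Z$, combined with the Thom isomorphism, yields $\alpha(\pi^*[\xi]\otimes \beta_V) = \pi^*\alpha[\xi]\otimes \beta_{\tilde V}$. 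Hence $(Z',\alpha(\pi^*[\xi]\otimes \beta_V))$ is the vector bundle modification of $(Z,\alpha[\xi])$ by $\tilde V$, and so represents the same class as $(Z,\alpha[\xi])$ in $K_*^{geo}(pt;D\otimes N)$.

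The main obstacle is the naturality of $\beta$, which is defined indirectly through the mapping-cone technique of \cite{MeyNest} and depends on a choice of homotopy realizing the commutativity in Proposition \ref{compDiaIsComm}. One has to verify that such a homotopy can be chosen coherently on $Y$ and compatibly with its restriction to $W$; this reduces to the observation that the input data $\aleph$, and in particular the homotopies $h_1,h_2 \in E(\mathbb{C},D\otimes C(B\Gamma)\otimes C([0,1]))$, are fixed over $B\Gamma$ and therefore pull back consistently along $f$ and $\tilde f$.
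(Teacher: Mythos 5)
Your proof is correct and follows essentially the same approach as the paper's. For bordism you form the doubled manifold $\tilde Z = Y\cup_{W^c}(W^c\times[0,1])\cup_{W^c}(-Y)$ and rerun the $\alpha$-construction over $(Y,W^c)$ to produce a bordism in $K_*^{geo}(pt;D\otimes N)$, which is exactly what the paper does (the paper attributes the smoothing to Conner--Floyd where you invoke straightening the angle, and has what appears to be a typo writing $W\times[0,1]$ for the cylinder piece). For vector bundle modification, the paper asserts a commutative square via naturality of the Thom isomorphism; your version usefully spells out what the paper's diagram leaves implicit, namely that the vector bundle $V\to W$ must first be extended to $\tilde V\to Z$ so that the bottom horizontal arrow lands in $K^0(Z^{\tilde V};D\otimes N)$ (the paper's displayed codomains $K^0(W;D\otimes N)$ and $K^0(W^V;D\otimes N)$ should read $K^0(Z;D\otimes N)$ and $K^0(Z^{\tilde V};D\otimes N)$). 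Your closing remark about the coherence of the homotopy underlying $\beta$ makes explicit what the paper compresses into ``we can apply more or less the same argument,'' and is the right point to flag.
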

\begin{proof}
We begin with the bordism relation. Suppose that $(W,\xi,f)$ is the boundary of $((Z,W),\nu,g)$. Using results in \cite{CFPerMap}, we can form $Y$ -- a smooth compact spin$^c$ manifold with boundary, homeomorphic to $Z \cup_{\partial Z - {\rm int}{W}} W\times [0,1] \cup_{\partial Z - {\rm int}{W}} -Z$. Moreover, we can assume that the boundary of $Y$ is $(W\cup_{\partial W} \partial W \times[0,1]\cup_{\partial W} -W)$. We can apply more or less the same argument as in the preceeding paragraphs to obtain a map 
$$\alpha_Z : K^0(Z,\partial Z- {\rm int}{W};\mu_{\mathcal{L}_\epsilon}) \rightarrow K^0(Y;D\otimes_{\phi}N)$$
such that $\alpha_Z|_W = \alpha$. It follows that $(Y, \alpha_Z(\nu))$ is a bordism (with respect to $K_*^{geo}(pt;D\otimes N)$); it has boundary $((W\cup \partial W \times[0,1]\cup -W,\alpha(\xi))$. This completes the argument for the bordism relation.
\par
The proof for the vector bundle modification relation proceeds as follows. Let $(W,\xi,f)$ be a cycle in $\mathcal{S}^{geo}(\Gamma,C^*_\epsilon)$ and $V$ a spin$^c$ vector bundle of even rank over $W$. Let $W^V$ denote the manifold in the cycle obtained by taking the vector bundle modification of $(W,\xi,f)$ by $V$.  It follows from the fact that the Thom isomorphism is a natural isomorphism that the following diagram commutes:
\begin{center}
$\begin{CD}
K^0(W,\partial W;\mu_{\mathcal{L}_\epsilon}) @>>> K^*(W^V,\partial W^V;\mu_{\mathcal{L}_\epsilon})   \\
 @V\alpha VV  @V \alpha^{V} VV  \\
K^0(W;D\otimes_{\phi}N) @>>> K^0(W^V;D\otimes_{\phi}N)
\end{CD}$
\end{center}
where 
\begin{enumerate}
\item the horizontal maps are obtained from the Thom isomorphism;
\item $\alpha^V$ denotes the map (on $K^0(W^V,\partial W^V;\mu_{\mathcal{L}_\epsilon})$) obtained from the input data $((E_1,E_2,\varphi),\phi_1,\phi_2,h_1,h_2)$ with $W^V$ (rather than $W$) in the construction of $\alpha$;
\end{enumerate} 
This commutative diagram implies that the map respects the vector bundle relation.
\end{proof}

\begin{theorem} \label{exactSeqForRmap}
Let $\aleph=((E_1,E_2,\varphi),\phi_1,\phi_2,h_1,h_2)$ and $\aleph_0$ be as in Definition \ref{inputData}. Then, the following diagram commutes
\begin{center}
\scriptsize
$\begin{CD}
@>>> K_*^{geo}(B\Gamma) @>\mu_\epsilon>> K_*^{geo}(pt;C^*_{\epsilon}(\Gamma))  @>>> \mathcal{S}^{geo}_*(\Gamma,C^*_\epsilon) @>>> K_{*-1}^{geo}(B\Gamma) @>>>  \\
 @. @V \ind_{\aleph_0}^{\rz} VV  @V (\phi_1-\phi_2)_* VV @V {\rm ind}_\aleph^\mathbbm{R}VV @V \ind_{\aleph_0}^{\rz} VV  \\
@>>>  K_{*+1}(D;\rz) @>\delta>> K_*^{geo}(pt;D) @>\phi_*>> K_*^{geo}(pt;D\otimes N) @>r>> K_*(D;\rz) @>>>
\end{CD}$
\normalsize
\end{center}
where 
\begin{enumerate}
\item the first exact sequence is the one constructed in Theorem \ref{sixTerExtSeq};
\item the second exact sequence is the Bockstein sequence associated to $K$-theory with $\rz$-coefficients;
\item the first vertical mapping $\ind_{\aleph_0}^{\rz}$ is defined above in \eqref{rmodzforaleph} and the second vertical mapping $ {\rm ind}_\aleph^\mathbbm{R}$ is defined in \eqref{defnOfind}.
\end{enumerate}
\end{theorem}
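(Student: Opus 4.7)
The plan is to verify commutativity of the three squares in the diagram; exactness of the top row is Theorem \ref{sixTerExtSeq}, and exactness of the bottom row is a standard property of the Bockstein sequence associated with $0\to \Z\to \R\to \rz\to 0$ (using the identification $K_*(D;\R)\cong K_*(D\otimes N)$). The leftmost square is already Proposition \ref{BCandBockMapComm}, so only the middle and rightmost squares remain.

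For the middle square I would evaluate both composites on a cycle $(M,\mathcal{E}_B)\in K_*^{geo}(pt;C^*_\epsilon(\Gamma))$. Since $r(M,\mathcal{E}_B)=(M,(\mathcal{E}_B,0,0,0,0),\emptyset)$ has empty boundary, the construction of $\ind_\aleph^\mathbb{R}$ from Equation \eqref{defnOfind} produces the manifold $Z=M\sqcup(-M)$ (the cylinder $\partial W\times[0,1]$ is empty). Under this circumstance the cocycle $\gamma(\mathcal{E}_B,0)$ from Proposition \ref{compDiaIsComm} reduces to $(\mathcal{E}_B\otimes_{\phi_1}D,0,\mathcal{E}_B\otimes_{\phi_2}D)$, and the map $\beta$ is forced to be the canonical lift. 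After passing to $K^0(Z;D\otimes N)$ via the inclusion induced by $\phi:D\to D\otimes N$ we obtain exactly $\phi_*\bigl((\phi_1-\phi_2)_*(M,\mathcal{E}_B)\bigr)$. Hence the middle square commutes on the nose at the level of cycles.

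For the rightmost square the key observation is that the horizontal map $r:K_*^{geo}(pt;D\otimes N)\to K_*(D;\rz)$ is the analytic index followed by the Bockstein projection $K_*(D;\R)\to K_*(D;\rz)$, and this projection annihilates the image of $\phi_*:K_*(D)\to K_*(D\otimes N)=K_*(D;\R)$. Given a cycle $(W,\xi,f)$, I would Mayer--Vietoris-decompose the index of $(Z,\alpha(\xi))$ along the cylindrical collar into contributions from $W$, from $\partial W\times[0,1]$, and from $-W$. By construction of $\gamma$, the restrictions of $\alpha(\xi)$ to $W$ and $-W$ are of the form $\mathcal{E}_B\otimes_{\phi_i}D\otimes_\phi N$ (and similarly for $\mathcal{E}_B'$), i.e., lie in the image of $\phi_*$ at the level of $K$-theory with coefficients, and therefore vanish under $r$. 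The surviving cylinder contribution is, by the explicit formula for the middle component of $\gamma$ and the definition of $\beta$ through the homotopy in Proposition \ref{compDiaIsComm}, the Dirac index on $\partial W$ twisted by $(\pi^*E_\C\otimes f^*E_1,\,E_\C\otimes f^*E_1\sqcup E_\C\otimes f^*E_2,\,\mathrm{id}\sqcup\mathrm{id}\otimes \varphi)$ minus the analogous data built from $E_\C'$; this is precisely the pairing $\langle(E_1,E_2,\varphi),(\partial W,E_\C,f)-(\partial W,E_\C',f)\rangle$ used to define $\ind_{\aleph_0}^{\rz}\circ \delta$ via \eqref{rmodzforaleph}.

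The main obstacle I foresee is the rightmost square: one must verify rigorously that the Mayer--Vietoris decomposition of the $K$-theoretic index on $Z=W\cup_{\partial W}(\partial W\times[0,1])\cup_{\partial W}(-W)$ matches the decomposition of $\alpha(\xi)$ coming from $\beta$, and that the cylinder piece really recovers the geometric $K$-homology pairing with $(E_1,E_2,\varphi)$ on the nose rather than only up to a class killed by $r$. This reduces, via Proposition \ref{degeneratestabilization}, to the case of easy cocycles, where the computation is transparent; the general case is then handled by observing that degenerate cocycles give null-bordant cycles in $\mathcal{S}^{geo}_*(\Gamma,C^*_\epsilon)$ (Proposition \ref{bordismprop}) and thus map to zero on both sides.
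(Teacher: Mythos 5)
Your handling of the leftmost square (citing Proposition \ref{BCandBockMapComm}) and the middle square (observing that $\partial W=\emptyset$ forces $Z=M\dot\cup(-M)$ and evaluating the construction on the nose) matches the paper's argument, and is correct.

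For the rightmost square your strategy is genuinely different from the paper's, and as written it has a gap. You propose to ``Mayer--Vietoris-decompose the index of $(Z,\alpha(\xi))$'' along the collar and argue that the pieces over $W$ and $-W$ die under $r$ while the cylinder piece recovers the pairing with $(E_1,E_2,\varphi)$. The problem is that $W$, $\partial W\times[0,1]$, and $-W$ are manifolds with boundary, so none of them carries a class in $K_*^{geo}(pt;D\otimes N)$ by itself; there is no well-defined decomposition of a geometric $K$-homology cycle along a separating hypersurface, and the informal ``contribution from $W$'' is exactly the kind of quantity that would require APS-type boundary terms to make sense, which is precisely what the whole framework is designed to avoid. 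You flag this as the ``main obstacle,'' and it really is the crux: to make the argument compile you would have to replace the cut-and-paste heuristic by an explicit bordism. That is what the paper does. Using the lifting map $\beta:K^0(W,\partial W;\mu_{\mathcal{L}_\epsilon})\to K^0(Z,W\dot\cup W;\phi)$ built after Proposition \ref{compDiaIsComm}, the class $\nu=\beta(\xi)$ lives over $Z\times[0,1]$ and restricts to $\alpha(\xi)$ on $W\times\{0\}$ and to the cocycle $\eta$ representing $\langle(E_1,E_2,\varphi),\delta(W,\xi,f)\rangle$ on $\partial W\times[0,1]$; hence $(Z\times[0,1],\nu)$ is a bordism in $K_*(D;\rz)$ between $r\circ\ind_\aleph^{\mathbb R}(W,\xi,f)$ and $\ind_{\aleph_0}^{\rz}\circ\delta(W,\xi,f)$. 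Your intuition about which pieces should vanish is correct and is encoded in this bordism, but the bordism itself (not a decomposition) is the rigorous step, and your sketch does not yet produce it. The final reduction to easy cocycles via Proposition \ref{degeneratestabilization} is sound but not needed once the bordism is in hand.
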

\begin{proof}
Proposition \ref{BCandBockMapComm} implies that we need only show that
\begin{center}
$\begin{CD}
K_*^{geo}(pt;C^*_\epsilon(\Gamma))  @>>> \mathcal{S}^{geo}_*(\Gamma,C^*_\epsilon) @>>> K_{*-1}^{geo}(B\Gamma)  \\
@V \phi_1-\phi_2 VV @V  {\rm ind}_\aleph^\mathbbm{R}VV @V  {\rm ind}_{\aleph_0}^{\rz}VV  \\
K_*^{geo}(pt;D) @>\phi_*>> K_*^{geo}(pt;D\otimes N) @>r>> K_*(D;\rz) 
\end{CD}$
\end{center}
commutes. 
\par
For the first square, commutativity is clear: a cycle in $\mathcal{S}_*^{geo}(\Gamma,C^*_\epsilon)$ whose manifold data has empty boundary is mapped to exactly $\phi_1-\phi_2$ by $ {\rm ind}_\aleph^\mathbbm{R}$. 
\par
For the second square, let $(W,\xi,f)$ be a cycle in $\mathcal{S}_*^{geo}(\Gamma,C^*_\epsilon)$. Then 
$$<(E_1,E_2,\varphi), \delta (W,\xi,f)> = (\partial W \times [0,1], \eta) $$
where $\eta$ is an element of $K^0(\partial W\times [0,1],\partial W \dot{\cup} -\partial W;\phi)$; this group and the construction of $\eta$ can be found in \cite{DeeRZ}.
Let $Z=W\cup \partial W \times [0,1] \cup -W$. Then, using the map $\beta:K^0(W,\partial W;\mu) \rightarrow K^0(Z,W\dot{\cup} W;\phi)$ defined in the discussion following Proposition \ref{compDiaIsComm}, we have an element $\nu \in K^0(Z\times [0,1], (W \cup -W)\times \{1\};\phi)$ (namely, $\beta(\xi)$) such that 
$$ \nu|_{W\times \{0\}}=\alpha(\xi) \hbox{ and } \nu|_{\partial W \times [0,1]}=\eta $$
Thus, $(Z\times [0,1], \nu)$ defines a bordism (in $K_*(D;\rz)$) between 
$$<(E_1,E_2,\varphi), \delta (W,\xi,f)>$$ 
and $\delta\circ {\rm ind}(W,\xi,f)$. This completes the proof of the commutativity of the second square.
\end{proof}

\begin{cor}
Let $\Gamma$ be a torsion-free, discrete group. Assuming that the completion $C^*_\epsilon(\Gamma)$ satifies the Baum-Connes conjecture, i.e., $\mu_{\epsilon}$ is an isomorphism, the mapping 
$$\ind_{\aleph}^\mathbbm{R}: \mathcal{S}^{geo}_*(\Gamma,C^*_\epsilon) \rightarrow K_*^{geo}(pt;D\otimes N)\quad\mbox{vanishes.}$$
\end{cor}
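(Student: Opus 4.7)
The plan is a standard diagram chase in the commutative ladder of Theorem \ref{exactSeqForRmap}. Let $x\in \mathcal{S}^{geo}_*(\Gamma,C^*_\epsilon)$ be arbitrary. The first step is to use the hypothesis that $\mu_\epsilon$ is an isomorphism in both parities to locate $x$ in the image of $r$: since $\delta(x)\in K_{*-1}^{geo}(B\Gamma)$ lies in $\ker \mu_\epsilon=0$ by exactness of the top row, there exists $y\in K_*^{geo}(pt;C^*_\epsilon(\Gamma))$ with $r(y)=x$.

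Next, surjectivity of $\mu_\epsilon$ allows us to write $y=\mu_\epsilon(z)$ for some $z\in K_*^{geo}(B\Gamma)$. Chasing $z$ around the two squares on the left of the ladder yields
\[
\ind_{\aleph}^{\mathbb R}(x)\;=\;\ind_{\aleph}^{\mathbb R}(r(y))\;=\;\phi_*\bigl((\phi_1-\phi_2)_*(y)\bigr)\;=\;\phi_*\bigl((\phi_1-\phi_2)_*\mu_\epsilon(z)\bigr)\;=\;\phi_*\bigl(\delta\,\ind_{\aleph_0}^{\R/\Z}(z)\bigr),
\]
where the last equality is the commutativity of the leftmost square (Proposition \ref{BCandBockMapComm}). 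The final step is to invoke exactness of the Bockstein sequence (the bottom row of the ladder) at $K_*^{geo}(pt;D)$, which gives $\phi_*\circ \delta=0$. Hence $\ind_{\aleph}^{\mathbb R}(x)=0$, as desired.

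There is really no obstacle here beyond correctly applying the hypotheses: the commutativity and exactness of both rows of the ladder in Theorem \ref{exactSeqForRmap} are already established, and the Baum--Connes hypothesis on $\mu_\epsilon$ is used twice (once to conclude $\delta(x)=0$ and once to lift $y$ to $z$). One could alternatively observe that if $\mu_\epsilon$ is an isomorphism in both parities then the six term exact sequence of Theorem \ref{sixTerExtSeq} forces $\mathcal{S}^{geo}_*(\Gamma,C^*_\epsilon)=0$ outright, rendering the vanishing of any mapping from it trivial; however the diagram chase above has the virtue of showing explicitly which exactness property at the target is being used, and it generalises to situations where only one of the vertical arrows of the ladder is known to be a surjection or isomorphism.
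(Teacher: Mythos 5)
Your diagram chase is correct and complete: injectivity of $\mu_\epsilon$ forces $\delta(x)=0$ hence $x=r(y)$; surjectivity lifts $y=\mu_\epsilon(z)$; the two left commuting squares and $\phi_*\circ\delta=0$ (exactness of the Bockstein row) then kill $\ind_\aleph^{\mathbbm R}(x)$. The paper states this corollary without a written proof, so there is nothing to diverge from; what you do is exactly the argument that Theorem \ref{exactSeqForRmap} sets up. Your closing observation is also right and is in fact the quickest route: exactness of the six-term sequence in Theorem \ref{sixTerExtSeq} with $\mu_\epsilon$ bijective in both parities already forces $\mathcal{S}^{geo}_*(\Gamma,C^*_\epsilon)=0$, so any map out of it vanishes. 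That one-line version needs only Theorem \ref{sixTerExtSeq}, whereas your diagram chase additionally exhibits, via the ladder, \emph{which} structural features of the target (here, $\phi_*\circ\delta=0$) are responsible, and --- as you say --- survives in weaker situations where only one vertical arrow is known to be onto.
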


\appendix

\section{Geometric $K$-homology with coefficients in a Banach algebra}
\label{banachktheory}

In this appendix, we will prove various results for geometric $K$-homology with coefficients in a Banach algebra. The results proven in the same way as for $C^*$-algebra coefficients we only sketch the proofs of. The geometric cycles with coefficients in a Banach algebra were defined in Definition \ref{geometriccyclesbacoeff}. 

Note that the association $X\mapsto K_*^{geo}(X;B)$ is a contra variant functor from the category of locally compact Hausdorff spaces with morphisms being continuous functions. This follows directly from the definition because if $g:X\to Y$ is a continuous mapping, there is an induced mapping 
$$g_*:K_*^{geo}(X;B)\to K_*^{geo}(Y;B),$$
defined on cycles by $g_*(M,\mathcal{E}_B,\varphi)=(M,\mathcal{E}_B,g\circ \varphi)$.

\begin{prop}
\label{hominvariance}
If $G:X\times [0,1]\to Y$ is a continuous mapping, and we set $g_t(x):=G(x,t)$, then
$$(g_0)_*=(g_1)_*:K_*^{geo}(X;B)\to K_*^{geo}(Y;B).$$
\end{prop}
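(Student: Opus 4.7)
The plan is to prove the statement by exhibiting, for each cycle, an explicit bordism realizing the equivalence between its two pushforwards. Since $K_*^{geo}(X;B)=\varinjlim_{X'\subseteq X\,\mathrm{compact}}K_*^{geo}(X';B)$, and every cycle $(M,\mathcal{E}_B,\varphi)$ has $\varphi(M)$ contained in a compact subset of $X$, it suffices to treat a fixed cycle $(M,\mathcal{E}_B,\varphi)$ and produce a bordism between $(M,\mathcal{E}_B,g_0\circ\varphi)$ and $(M,\mathcal{E}_B,g_1\circ\varphi)$ viewed as cycles in $K_*^{geo}(Y;B)$.

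The bordism I would write down is the cylinder
\[
(W,\mathcal{F}_B,\Phi):=\bigl(M\times[0,1],\,\pi^*\mathcal{E}_B,\,G\circ(\varphi\times\mathrm{id}_{[0,1]})\bigr),
\]
where $\pi:M\times[0,1]\to M$ is the first projection. One checks the four items required by the bordism relation in the Baum--Douglas model with coefficients in $B$ (cf.\ Definition \ref{geometriccyclesbacoeff}): $W$ is a compact smooth manifold with boundary; its spin$^c$-structure is induced from that of $M$ and the canonical (trivial) spin structure of $[0,1]$; $\mathcal{F}_B=\pi^*\mathcal{E}_B$ is a smooth locally trivial bundle of finitely generated projective $B$-modules, since smoothness and local triviality are preserved under pullback along the smooth submersion $\pi$; and $\Phi$ is continuous because $G$ and $\varphi$ are. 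With the boundary spin$^c$-orientation, one has $\partial W=(-M\times\{0\})\sqcup(M\times\{1\})$, and by construction $\mathcal{F}_B|_{M\times\{t\}}\cong\mathcal{E}_B$ while $\Phi|_{M\times\{t\}}=g_t\circ\varphi$ for $t=0,1$.

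Thus $(M,\mathcal{E}_B,g_1\circ\varphi)\,\dot{\cup}\,(-M,\mathcal{E}_B,g_0\circ\varphi)$ is the boundary of $(W,\mathcal{F}_B,\Phi)$, so
\[
(g_1)_*[M,\mathcal{E}_B,\varphi]-(g_0)_*[M,\mathcal{E}_B,\varphi]=0\quad\text{in}\ K_*^{geo}(Y;B),
\]
and passing to the inductive limit handles non-compact $X$. There is no serious obstacle here; the only mild point worth verifying explicitly is that the pullback $\pi^*\mathcal{E}_B$ is again a smooth locally trivial $B$-bundle in the sense used in Definition \ref{geometriccyclesbacoeff}, which follows at once from a partition of unity argument over trivializing neighbourhoods of $\mathcal{E}_B$ on $M$ transported to $M\times[0,1]$.
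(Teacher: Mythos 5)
Your proof is correct and takes essentially the same route as the paper: the cylinder $\bigl(M\times[0,1],\pi^*\mathcal{E}_B,G\circ(\varphi\times\mathrm{id}_{[0,1]})\bigr)$ is exactly the bordism the authors write down (they denote the bundle $\mathcal{E}_B\times[0,1]$, which is your $\pi^*\mathcal{E}_B$). The extra verifications you flag (smooth local triviality of the pullback bundle, reduction to compact $X'$) are sensible but routine and not spelled out in the original.
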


\begin{proof}
If $(M,\mathcal{E}_B,\phi)$ is a cycle for $K_*^{geo}(X;B)$, consider the cycle with boundary $(M\times [0,1],\mathcal{E}_B\times[0,1],G\circ (\varphi\times\mathrm{id}_{[0,1]}))$. The boundary of this cycle is exactly $(g_1)_*(M,\mathcal{E}_B,\varphi)\dot{\cup}(g_0)_*(-M,\mathcal{E}_B,\varphi)$ proving that 
$$(g_1)_*(M,\mathcal{E}_B,\varphi)\sim_{bor}(g_0)_*(M,\mathcal{E}_B,\varphi).$$
\end{proof}

\begin{lemma}
\label{analyticassemblyc}
The topological index mapping 
$$\ind_B^t:K_*^{geo}(pt;B)\to K_*(B), \quad (M,\mathcal{E}_B)\mapsto \beta_N i_![\mathcal{E}_B],$$ 
defined in Remark \ref{anassem}, is an isomorphism of abelian groups.
\end{lemma}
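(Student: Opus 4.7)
The plan is to verify that $\ind_B^t$ is a well-defined homomorphism and then to show bijectivity by producing an inverse. Well-definedness on equivalence classes requires checking invariance under disjoint union/direct sum (immediate), independence of the chosen embedding, bordism, and vector bundle modification. Independence of the embedding is the standard stabilization argument: any two spin$^c$-embeddings into $\R^{2N}$ and $\R^{2N'}$ become isotopic after further stabilization, and the shriek maps intertwine with Bott periodicity. For bordism invariance, given a spin$^c$-bordism $(W,\mathcal{F}_B)$ with $\partial W=M$ and $\mathcal{F}_B|_M=\mathcal{E}_B$, I would choose a neat spin$^c$-embedding $i_W:W\hookrightarrow \R^{2N}\times [0,\infty)$ with $i_W^{-1}(\R^{2N}\times\{0\})=\partial W$. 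The class $(i_W)_![\mathcal{F}_B]$ lies in $K_0(C_0(\R^{2N}\times[0,\infty);B))$, which vanishes because $C_0([0,\infty))$ is a cone algebra; combined with the fact that restriction to $\R^{2N}\times\{0\}$ sends $(i_W)_![\mathcal{F}_B]$ to $i_![\mathcal{E}_B]$, this yields $\beta_N i_![\mathcal{E}_B]=0$. For vector bundle modification by a spin$^c$-bundle $V\to M$ of even rank, the relevant identity is that the shriek along the composition $M^V\to M\hookrightarrow \R^{2N}$ of the Bott-twisted bundle $\pi^*[\mathcal{E}_B]\otimes \beta_V$ agrees with $i_![\mathcal{E}_B]$; this follows from functoriality of shriek together with the defining property of $\beta_V$ as the Thom class of $V$.

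For surjectivity, define $\gamma:K_0(B)\to K_0^{geo}(\mathrm{pt};B)$ by $[P]\mapsto (\mathrm{pt},P)$; then $\ind_B^t\circ\gamma=\mathrm{id}$ using the trivial embedding at $N=0$. Odd degree reduces to even via the suspension trick indicated in Remark \ref{anassem}, replacing $B$ by $C(S^1,B)$. For injectivity, I would show that every cycle $(M,\mathcal{E}_B)$ is equivalent in $K_*^{geo}(\mathrm{pt};B)$ to $(\mathrm{pt},P)$ for some $P$ with $[P]=\ind_B^t(M,\mathcal{E}_B)$. The reduction has the following structure: embed $M\hookrightarrow\R^{2N}$ with even-rank normal bundle $\nu$; vector bundle modification by $\nu$ replaces $(M,\mathcal{E}_B)$ by $(M^\nu,\pi^*\mathcal{E}_B\otimes \beta_\nu)$; a Pontrjagin--Thom-type collapse then gives a bordism to a cycle $(S^{2N},\tilde{\mathcal{E}}_B)$ with $\tilde{\mathcal{E}}_B$ corresponding to the extension by zero of $i_![\mathcal{E}_B]$ under $K_0(C_0(\R^{2N};B))\hookrightarrow K^0(S^{2N};B)$; the splitting $K^0(S^{2N};B)\cong K_0(B)\oplus K_0(B)\cdot\beta_N$ decomposes $\tilde{\mathcal{E}}_B$ into a constant summand, which is null-bordant via the disk bordism $D^{2N+1}$, and a Bott-twisted summand $\beta_N\otimes P$; finally $(S^{2N},\beta_N\otimes P)$ is, by definition, the vector bundle modification of $(\mathrm{pt},P)$ by the trivial rank-$2N$ bundle, so it is equivalent to $(\mathrm{pt},P)$.

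The main obstacle is the Pontrjagin--Thom step in the injectivity argument, together with the required verifications that shriek maps, Thom isomorphisms, and Bott classes all behave functorially with respect to the Banach algebra coefficient K-theory developed elsewhere in the appendix. For complex coefficients these are standard features of Baum--Douglas K-homology treated in \cite{Jak, Rav}, and the essential content should carry over to Banach algebras without surprises. However each compatibility---especially the Thom-isomorphism-compatible behavior of shriek maps for $B$-bundles with boundary terms---must be verified explicitly, and this is where most of the technical work lies.
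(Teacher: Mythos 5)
Your strategy matches the paper's: exhibit the map $[P]\mapsto(\mathrm{pt},P)$ as a two-sided inverse to $\ind_B^t$, with the nontrivial direction being the reduction $(M,\mathcal{E}_B)\sim(S^{2N},i_![\mathcal{E}_B])\sim(\mathrm{pt},\beta_N i_![\mathcal{E}_B])$ via Pontrjagin--Thom and sphere decomposition. The paper delegates exactly these steps (including the well-definedness checks you spell out) to \cite[Theorem~4.1]{BOOSW} ``mutatis mutandis,'' so you have simply unpacked the citation; the essential content and the key technical caveat you flag (Banach-coefficient Thom/shriek compatibilities) are the same.
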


\begin{proof}
We will prove that $\mathfrak{p}:K_*(B)\to K_*^{geo}(pt;B)$, $\xi\mapsto (pt,\xi)$ is an inverse to $\ind_B^t$. It is clear that $\ind_B^t\circ \mathfrak{p}=\mathrm{id}_{K_*(B)}$. It remains to prove that $(M,\mathcal{E}_B)\sim (pt,\beta_N i_![\mathcal{E}_B])$ in $K_*^{geo}(pt;B)$. One can prove that $(M,\mathcal{E}_B)\sim (S^{2N},i_![\mathcal{E}_B])\sim (pt, \beta_N i_![\mathcal{E}_B])$, mutatis mutandis to Theorem $4.1$ of \cite{BOOSW}.
\end{proof}

If $Y\subseteq X$ is a closed subset, we define a relative $B$-cycle for $(X,Y)$ to be a triple $(M,\mathcal{E}_B,f)$ where
\begin{enumerate}
\item $M$ is a smooth spin$^c$-manifold with boundary.
\item $f:M\to X$ is a continuous mapping such that $f(\partial M)\subseteq Y$.
\item $\mathcal{E}_B\to M$ is a locally trivial smooth bundle of finitely generated projective $B$-modules.
\end{enumerate}
We let $K_*^{geo}(X,Y;B)$ denote the abelian group of relative $B$-cycles for $(X,Y)$ modulo disjoint union/direct sum, vector bundle modification and bordism. We note that
\[K_*^{geo}(X,Y;B):=\varinjlim_{X'\subseteq X\mbox{   compact}} K_*^{geo}(X',X'\cap Y;B).\]
A direct consequence of this identity guarantees additivity of $K_*^{geo}(-;B)$ in the sense that if $X=\coprod_\alpha X_\alpha$ then
\begin{equation}
\label{additivitykhom}
K_*^{geo}(X;B)=\bigoplus _{\alpha} K_*^{geo}(X_\alpha;B).
\end{equation}

A mapping $g:(X,Y)\to (X',Y')$ between two pairs $(X,Y)$ and $(X',Y')$ as above is a continuous mapping $g:X\to X'$ such that $g(Y)\subseteq Y'$. Associated to such a map, there is an induced map 
$$g_*:K_*^{geo}(X,Y;B)\to K_*^{geo}(X',Y';B),$$
defined on cycles by $g_*(M,\mathcal{E}_B,\phi)=(M,\mathcal{E}_B,g\circ \phi)$. The proof of the next proposition follows in the same manner as Proposition \ref{hominvariance} after ``straightening the angle", see \cite[Lemma $4.6.1$]{Rav}.

\begin{prop}
\label{homoinvofrelative}
Assume that $(X,Y)$ and $(X',Y')$ are two pairs as above and that the two mappings $h,g:(X,Y)\to (X',Y')$ are homotopic, then 
\[g_*=h_*:K_*^{geo}(X,Y;B)\to K_*^{geo}(X',Y';B).\]
\end{prop}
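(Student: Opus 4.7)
The plan is to mimic the proof of Proposition \ref{hominvariance}, constructing an explicit relative bordism out of the cylinder $M\times[0,1]$, and to verify that the side of that cylinder maps into $Y'$ so as to produce a bordism valid in the relative theory.

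Concretely, let $G:X\times[0,1]\to X'$ be a continuous homotopy with $G_0=g$, $G_1=h$, and $G(Y\times[0,1])\subseteq Y'$ (i.e., a homotopy of pairs), and let $(M,\mathcal{E}_B,\varphi)$ represent an arbitrary class in $K_*^{geo}(X,Y;B)$. Denoting the projection by $\pi_M:M\times[0,1]\to M$, I would form
\[W:=M\times[0,1],\quad \mathcal{E}_W:=\pi_M^*\mathcal{E}_B,\quad F:=G\circ(\varphi\times\mathrm{id}_{[0,1]}):W\to X'.\]
After straightening the angles at $\partial M\times\{0,1\}$ as in \cite[Lemma 4.6.1]{Rav}, $W$ becomes a smooth compact spin$^c$-manifold with boundary
\[\partial W=(-M\times\{0\})\cup(\partial M\times[0,1])\cup(M\times\{1\}),\]
and $\mathcal{E}_W$, $F$ descend through the straightening.

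The next step is to check that the three pieces of $\partial W$ have the properties required of a relative bordism. On the caps $-M\times\{0\}$ and $M\times\{1\}$, $F$ restricts to $g\circ\varphi$ and $h\circ\varphi$, so these contribute precisely $-g_*(M,\mathcal{E}_B,\varphi)$ and $h_*(M,\mathcal{E}_B,\varphi)$. On the side $\partial M\times[0,1]$, the map $F(m,t)=G(\varphi(m),t)$ lies in $Y'$, because $\varphi(m)\in Y$ for $m\in\partial M$ and $G$ is a homotopy of pairs. Hence $(W,\mathcal{E}_W,F)$ is a relative bordism exhibiting $h_*(M,\mathcal{E}_B,\varphi)\sim_{bor}g_*(M,\mathcal{E}_B,\varphi)$ in $K_*^{geo}(X',Y';B)$, which yields the desired equality of induced maps on $K$-homology.

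The only real obstacle is the corner-straightening step, since $M\times[0,1]$ is a priori only a manifold with corners rather than with boundary; this is a standard construction already invoked (in the same form) in the references used throughout the paper, so once it is imported no new geometric input is required. The verification that the side of the cylinder lands in $Y'$ is then immediate from the homotopy-of-pairs hypothesis, and the identification of the caps with $g_*$ and $h_*$ is tautological from the definitions.
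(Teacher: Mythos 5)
Your proof is correct and takes exactly the approach the paper intends: the paper simply remarks that the argument follows as in Proposition \ref{hominvariance} after straightening the angle via \cite[Lemma 4.6.1]{Rav}, and you have carried this out explicitly, including the necessary check that the side $\partial M\times[0,1]$ of the cylinder maps into $Y'$ under the homotopy of pairs.
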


\begin{lemma}
\label{exactnesslemma}
For any pair $(X,Y)$ as above, there is a boundary mapping $\partial:K_*^{geo}(X,Y;B)\to K_{*+1}^{geo}(Y;B)$, defined on cycles by 
$$\partial(M,\mathcal{E}_B,\varphi)=(\partial M,\mathcal{E}_B|_{\partial M},\varphi|_{\partial M}),$$
making the following sequence exact
$$\cdots\xrightarrow{\partial} K_*^{geo}(Y;B)\xrightarrow{i_*}K_*^{geo}(X;B)\to K_*^{geo}(X,Y;B)\xrightarrow{\partial}K_{*+1}^{geo}(Y;B)\to\cdots,$$
Here $i:Y\to X$ denotes the inclusion.
\end{lemma}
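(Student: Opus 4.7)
The plan is to verify in turn that (i) $\partial$ is well-defined on bordism and vector-bundle-modification classes of relative cycles; (ii) consecutive compositions in the long exact sequence all vanish; and (iii) exactness holds at each of the three positions $K_*^{geo}(Y;B)$, $K_*^{geo}(X;B)$, $K_*^{geo}(X,Y;B)$. Throughout, the Banach algebra $B$ enters only passively as the fiber of a locally trivial smooth bundle, so the reasoning follows the classical Baum-Douglas pattern (cf.\ \cite[Section 4]{DeeRZ}, \cite{Rav}) without modification. The convention I fix is that a bordism between two relative cycles $(M_0,\mathcal{E}_0,f_0)$ and $(M_1,\mathcal{E}_1,f_1)$ for $(X,Y)$ is a smooth spin$^c$-manifold with codimension-two corners $(Z,\mathcal{F},F)$ whose boundary decomposes as $M_0\cup L\cup (-M_1)$, where $L$ is a spin$^c$-manifold with boundary $\partial M_0 \sqcup (-\partial M_1)$, where $F(L)\subseteq Y$, and where the corners of $Z$ are identified with $\partial M_0\sqcup\partial M_1$.

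Well-definedness of $\partial$ is then automatic: disjoint unions and boundaries commute, the boundary of $M^V$ is $(\partial M)^{V|_{\partial M}}$ with the restricted bundle data so vector bundle modification is respected, and for a relative bordism $(Z,\mathcal{F},F)$ as above the side piece $(L, \mathcal{F}|_L, F|_L)$ is precisely an absolute bordism in $Y$ between the two boundary cycles. That the sequence is a complex follows from three elementary observations. First, $\partial \circ j_* = 0$, where $j_*$ denotes the map sending a closed cycle to itself viewed as a relative cycle, because closed manifolds have empty boundary. Second, $i_* \circ \partial = 0$ because a relative cycle $(M,\mathcal{E}_B,f)$ is itself an absolute null-bordism in $X$ of $(\partial M, \mathcal{E}_B|_{\partial M}, i\circ f|_{\partial M})$. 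Third, $j_* \circ i_* = 0$: the image of a closed cycle $(N,\mathcal{E},\psi)$ in $K_*^{geo}(X,Y;B)$ is null-bordant by $Z = N \times [0,1]$ with map $(x,t) \mapsto i(\psi(x))$, using $L = N \times \{1\}$ as the side mapping to $Y$, with no corner issues since $\partial N = \emptyset$.

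For exactness I argue position by position. At $K_*^{geo}(Y;B)$: if $(W,\mathcal{F},F)$ is an absolute null-bordism in $X$ of a closed cycle $(N,\mathcal{E},\psi)$, then it is tautologically a relative cycle for $(X,Y)$, since $F(\partial W)=\psi(N)\subseteq Y$, whose image under $\partial$ recovers $(N,\mathcal{E},\psi)$. At $K_*^{geo}(X,Y;B)$: if $\partial(M,\mathcal{E},f)$ vanishes in $K_*^{geo}(Y;B)$, choose a null-bordism $(N,\mathcal{F},g)$ in $Y$ of $(\partial M,\mathcal{E}|_{\partial M},f|_{\partial M})$, and glue $\tilde M:=M\cup_{\partial M}(-N)$ with the obvious bundle $\tilde{\mathcal{E}}$ and map $\tilde f$ equal to $f$ on $M$ and $i\circ g$ on $-N$ to obtain a closed cycle in $X$; a relative bordism between $j_*[\tilde M, \tilde{\mathcal{E}}, \tilde f]$ and $(M,\mathcal{E},f)$ is produced by straightening the corner in $\tilde M\times[0,1]$ and declaring $(-N)\times\{1\}$ the side $L$ mapping to $Y$. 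At $K_*^{geo}(X;B)$: a relative null-bordism $(Z,\mathcal{F},F)$ of a closed cycle $(M,\mathcal{E},\varphi)$ has $\partial Z = M\sqcup L$ with $L$ automatically closed, since $\partial M=\emptyset$ forces no corners, so $(L,\mathcal{F}|_L,F|_L)$ lifts to an absolute cycle in $Y$ whose image under $i_*$ coincides with $\pm[M,\mathcal{E},\varphi]$ via the bordism $Z$ itself.

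The only place where care is needed is in the construction of relative bordisms between absolute and relative cycles, where the corner structure must be smoothed. This is handled by the classical ``straightening the angle'' technique already used in Proposition~\ref{homotopypropforcycles} (cf.\ \cite[Lemma 4.1.9]{Rav}); the Banach bundle data extends continuously across the straightening since only the smooth structure of the base is modified. The mild smoothness assumption on $\mathcal{E}_B$ can in any case be relaxed by \cite[Theorem 3.14]{Sch}, as noted after Definition~\ref{geometriccyclesbacoeff}.
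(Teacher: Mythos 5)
Your structure — well-definedness, complex, then exactness at each of the three positions — is exactly what the paper has in mind when it points to Raven's Proposition~4.6.8 and to the proof of Theorem~\ref{sixTerExtSeq}, and the constructions you give (mapping cylinders, gluing along $\partial M$, straightening angles) are the right ones.

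There is, however, a gap that recurs in all three exactness arguments: you silently strengthen the hypothesis from \emph{``the class vanishes in the group''} to \emph{``the cycle is an honest boundary.''} For example, at $K_*^{geo}(Y;B)$ your argument opens with ``if $(W,\mathcal{F},F)$ is an absolute null-bordism in $X$ of a closed cycle $(N,\mathcal{E},\psi)$'' — but the hypothesis is only that $i_*[N,\mathcal{E},\psi]=0$ in $K_*^{geo}(X;B)$, which a priori means bordant to zero \emph{after} direct sums and vector bundle modifications, not that $(N,\mathcal{E},i\circ\psi)$ itself bounds. The bridge is the normal bordism device: a cycle is trivial if and only if, for some normal bundle $V$, its modification by $V$ bounds (Raven, Cor.~4.5.16; cf.\ Corollary~\ref{norBorToTrivial} for the analogous statement in $\mathcal{S}^{geo}$). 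This is precisely the step the paper flags as essential in the proof of Theorem~\ref{sixTerExtSeq} (``It is here that the notion of normal bordism is required''), and your proof should invoke it at the same points. Note also a finer compatibility issue at $K_*^{geo}(X,Y;B)$: after choosing a normal bundle over $\partial M$ so that the boundary cycle bounds in $Y$, that normal bundle need not extend over $M$; one must replace it using the stable-isomorphism statement for normal bundles together with the iterated-modification lemma (Lemma~\ref{norStaIso} and Lemma~\ref{twoVecMod}, following the pattern in the displayed computation at the start of the proof of Theorem~\ref{sixTerExtSeq}). With the normal bordism corollary and those two lemmas inserted before each exactness argument, your proof is complete and is the intended one.
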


The proof of Lemma \ref{exactnesslemma} is similar to the one in \cite[Proposition 4.6.8]{Rav}; it also goes along the same lines as the proof of Theorem \ref{sixTerExtSeq}. The proof of the next lemma is also similar to a proof in \cite{Rav}; in this case, the relevant proposition is 4.6.7 of \cite{Rav}.

\begin{lemma}
\label{excisionlemma}
If $U\subseteq X$ is an open subset such that $\overline{U}\subseteq A^\circ$, then the natural mapping 
\[K_*^{geo}(X\setminus U,A\setminus U;B)\to K_*^{geo}(X,A; B),\]
is an isomorphism.
\end{lemma}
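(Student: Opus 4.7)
The plan is to establish both surjectivity and injectivity of the inclusion-induced map via a common \emph{cutting} construction. Since $\overline{U}\subseteq A^\circ$, by normality I choose an open $V$ with $\overline{U}\subseteq V\subseteq\overline{V}\subseteq A^\circ$. Given any relative $B$-cycle $(M,\mathcal{E}_B,\varphi)$ for $(X,A)$, Urysohn's lemma combined with standard smooth approximation yields a smooth $h:M\to[0,1]$ that is identically $1$ on a neighborhood of $\varphi^{-1}(\overline{V})$ and identically $0$ on a neighborhood of $M\setminus\varphi^{-1}(A^\circ)$. A common regular value $t\in(0,1)$ of $h$ and $h|_{\partial M}$ produced by Sard, followed by straightening the corner along $h^{-1}(t)\cap\partial M$ as in \cite[Lemma~$4.6.1$]{Rav}, makes $N:=h^{-1}([0,t])$ into a smooth compact spin$^c$-manifold with boundary carrying the inherited data $\mathcal{E}_B|_N$ and $\varphi|_N$. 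One checks $\varphi(N)\subseteq X\setminus\overline{V}\subseteq X\setminus U$, and that both pieces $\partial M\cap N$ and $h^{-1}(t)$ of $\partial N$ land in $A\setminus U$: the former inherits its image from $\partial M\subseteq\varphi^{-1}(A)$ and misses $\overline{V}$ because $t<1$, while the latter satisfies $h=t>0$, forcing $\varphi$ into $A^\circ\setminus\overline{V}$.

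For surjectivity, apply the cut to $(M,\mathcal{E}_B,\varphi)\in K_*^{geo}(X,A;B)$ to obtain $(N,\mathcal{E}_B|_N,\varphi|_N)\in K_*^{geo}(X\setminus U,A\setminus U;B)$. The cylinder $M\times[0,1]$ with pulled-back bundle and map serves as a relative bordism in $(X,A)$ whose regular-domain boundary is $M\times\{0\}\sqcup N\times\{1\}$ and whose auxiliary boundary $h^{-1}([t,1])\times\{1\}\cup\partial M\times[0,1]$ lands in $A$, because $h\geq t>0$ forces the first piece into $\varphi^{-1}(A^\circ)$ while $\varphi(\partial M)\subseteq A$ by hypothesis.

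For injectivity, suppose $(M,\mathcal{E}_B,\varphi)$ is a cycle for $(X\setminus U,A\setminus U;B)$ whose image in $K_*^{geo}(X,A;B)$ bounds via some $(Z,\mathcal{F}_B,\Phi)$. First apply the cut to $M$ itself, producing $M'\subseteq M$ with $\varphi(M')\subseteq X\setminus\overline{V}$; the cylinder bordism from $M$ to $M'$ now lives entirely within $K_*^{geo}(X\setminus U,A\setminus U;B)$, since $\varphi\circ\pi$ maps all of $M\times[0,1]$ into $\varphi(M)\subseteq X\setminus U$, yielding $M\sim_{bor}M'$ in this relative group. Concatenating this cylinder with $Z$ along $M$ gives a null-bordism $(Z',\mathcal{F}'_B,\Phi')$ of $M'$ in $K_*^{geo}(X,A;B)$. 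Apply the cutting construction to $Z'$, but additionally demand that the cut function $h_{Z'}$ vanish on $M'$; this is consistent with the other requirements ($\equiv 1$ on a neighborhood of $\Phi'^{-1}(\overline{V})$ and $\equiv 0$ on a neighborhood of $Z'\setminus\Phi'^{-1}(A^\circ)$) precisely because $\varphi(M')\cap\overline{V}=\emptyset$ and $\overline{V}\subseteq A^\circ$ put the three prescribed zero/one sets in disjoint closed subsets of $Z'$. The resulting cut $Z'':=h_{Z'}^{-1}([0,t])$ contains $M'$, satisfies $\Phi'(Z'')\subseteq X\setminus\overline{V}\subseteq X\setminus U$, and has its remaining boundary $h_{Z'}^{-1}(t)\cup(A\text{-part}\cap Z'')$ in $A\setminus U$, so it witnesses $M'$ — and hence $M$ — as null-bordant in $K_*^{geo}(X\setminus U,A\setminus U;B)$.

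The main technical obstacle is the injectivity step: arranging that the cutting of $Z'$ leaves the chosen boundary $M'$ undisturbed is exactly what forces the preparatory cut of $M$ into $X\setminus\overline{V}$, and it is what makes the strict separation $\overline{U}\subseteq V\subseteq\overline{V}\subseteq A^\circ$ essential. Corner-straightening, compatibility with vector bundle modification, and passage to the colimit over compact subsets when $X$ is merely locally compact are all routine.
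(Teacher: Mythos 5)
Your cutting construction is indeed the standard mechanism for excision in this bordism-style theory, and it is surely what Raven's Proposition 4.6.7 -- which the paper cites in place of giving a proof -- carries out. The Urysohn/Sard setup, the identification of $N := h^{-1}([0,t])$ as the cut cycle, and the surjectivity argument via the cylinder $M\times[0,1]$ (with the auxiliary part of the boundary forced into $A$ because $h \geq t > 0$ implies $\varphi \in A^\circ$) are all correct, including the set-theoretic containments $\varphi(N) \subseteq X\setminus\overline V$ and $\varphi(\partial N) \subseteq A\setminus U$.

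There is a gap in the injectivity step, however. You write ``whose image in $K_*^{geo}(X,A;B)$ bounds via some $(Z,\mathcal{F}_B,\Phi)$,'' but vanishing of a class in $K_*^{geo}(X,A;B)$ only means the cycle lies in the equivalence class of $\emptyset$ under the relation \emph{generated by} disjoint union/direct sum, bordism, and vector bundle modification; this does not directly produce a null-bordism. The paper handles precisely this kind of issue elsewhere via normal bordism: Corollary~\ref{norBorToTrivial} and the proof of Theorem~\ref{sixTerExtSeq} rest on the fact that a trivial class \emph{normally} bounds, i.e.\ bounds after a vector bundle modification by a normal bundle, and the analogous statement for the relative groups $K_*^{geo}(X,A;B)$ (implicit in Raven's treatment) is what your argument needs. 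Once you have it, replace $M$ by a normal vector bundle modification $M^V$ that genuinely bounds via some $Z$ in $(X,A)$ -- noting that $M^V$ is still a cycle for $(X\setminus U, A\setminus U)$ since $\varphi\circ\pi_V$ still has image in $X\setminus U$, and that $[M^V]=[M]$ already in the smaller group -- and then run your cutting argument on $Z$. With that reduction stated, the rest of your proof, including the useful device of prescribing $h_{Z'}\equiv 0$ on a collar of $M'$, is sound, and the remaining appeals to corner-straightening and the colimit over compacta are as routine as you claim.
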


Combining \eqref{additivitykhom}, Proposition \ref{homoinvofrelative}, Lemma \ref{exactnesslemma} and Lemma \ref{excisionlemma}, we have the following theorem.

\begin{theorem}
\label{kgeohomtheory}
$K_*^{geo}(-;B)$ is a generalized homology theory.
\end{theorem}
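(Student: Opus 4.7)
The plan is to verify the Eilenberg--Steenrod axioms for a generalized homology theory on the category of pairs of locally compact Hausdorff spaces (or equivalently, CW-pairs in the appropriate sub-category) by assembling the results proved earlier in the appendix, so that essentially nothing new has to be done beyond packaging. Concretely, I would check the axioms one by one:

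First, functoriality on pairs: as already noted in the paragraph preceding Proposition \ref{homoinvofrelative}, a continuous map of pairs $g:(X,Y)\to(X',Y')$ induces $g_*(M,\mathcal{E}_B,\varphi)=(M,\mathcal{E}_B,g\circ\varphi)$, and this clearly respects the equivalence relations generating $K_*^{geo}$. Homotopy invariance on pairs is exactly Proposition \ref{homoinvofrelative}. The disjoint union (additivity) axiom is the isomorphism \eqref{additivitykhom}, obtained from the direct-limit description combined with the fact that a single cycle has compact image and hence lands in at most finitely many components. The long exact sequence of a pair is furnished by Lemma \ref{exactnesslemma}, where the boundary map $\partial(M,\mathcal{E}_B,\varphi)=(\partial M,\mathcal{E}_B|_{\partial M},\varphi|_{\partial M})$ is natural with respect to maps of pairs by direct inspection. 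Excision is Lemma \ref{excisionlemma}.

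The only point requiring a short additional remark is naturality of the boundary map, which is needed to make the long exact sequence a natural transformation between functors on pairs; this is immediate from the formula for $\partial$ since pullbacks commute with restriction to the boundary. With all five axioms collected in this way, the theorem follows.

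I do not expect a main obstacle here, since every axiom has been set up in the preceding lemmas; the only mildly delicate point is making sure that the direct-limit definition $K_*^{geo}(X,Y;B):=\varinjlim_{X'\subseteq X\text{ compact}}K_*^{geo}(X',X'\cap Y;B)$ is compatible with all structure maps (functoriality, $\partial$, excision, and the additivity isomorphism), but this is routine since exact sequences and direct sums both commute with filtered colimits of abelian groups. Thus the proof reduces to a short verification assembling the preceding results, which is the form I would present.
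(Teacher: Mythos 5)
Your proposal is correct and is exactly the paper's approach: the paper's proof consists of the single sentence "Combining \eqref{additivitykhom}, Proposition \ref{homoinvofrelative}, Lemma \ref{exactnesslemma} and Lemma \ref{excisionlemma}, we have the following theorem." Your write-up simply makes explicit the routine verifications (functoriality of the boundary map, compatibility of all structure maps with the direct-limit definition) that the paper leaves implicit, so there is nothing to fault and nothing genuinely different.
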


As a direct consequence of this theorem, there is an abstract characterization of Banach algebras for which there exists an analytic index mapping $\ind_{X,B}^a:K_*^{geo}(X;B)\to KK^{ban}(C_0(X),B)$ and it being an isomorphism. It is unclear to the authors when this condition is fulfilled beyond the case $B=\C$. Let $\chi\in C^\infty(\R)$ be a function such that $\chi(t)=1$ for $t>>0$ and $\chi(t)=-1$ for $t<<0$. We identify a smooth bundle of finitely generated projective $B$-modules $\mathcal{E}_B\to M$ with a projection $p_{\mathcal{E}_B}\in C^\infty(M;M_N(B))$, for some $N$, via Serre-Swan's theorem. The analytic index is given on the level of cycles by 
$$\ind_{X,B}^a:(M,\mathcal{E}_B,\phi)\mapsto \phi_*\left[p_{\mathcal{E}_B}L^2(M;B^N),p_{\mathcal{E}_B}(\chi(D_{M}))\otimes \mathrm{id}_{B^N})p_{\mathcal{E}_B}\right],$$
where $D_{M}$ denotes the spin$^c$-Dirac operator on $M$. If $KK^{ban}(-,B)$ is half-exact on the category of Banach algebras of continuous functions vanishing at infinity on a finite $CW$-complex, the mapping $\ind_{X,B}^a$ respects the bordism relation. We conclude the following Corollary of Theorem \ref{kgeohomtheory}.

\begin{cor}
\label{kkbankkgeo}
If $B$ is a Banach algebra such that the contra variant functor $KK^{ban}(-,B)$ is half-exact on the category of Banach algebras of continuous functions vanishing at infinity on a finite $CW$-complex, the natural transformation
\[\ind_{X,B}^a:K_*^{geo}(X;B)\to KK^{ban}(C_0(X),B),\]
is a well-defined isomorphism for any finite $CW$-complex $X$.
\end{cor}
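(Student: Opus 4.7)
The strategy is to interpret both sides as generalized homology theories on finite CW-complexes and then invoke the comparison principle. First I would verify that $\ind_{X,B}^{a}$ is well-defined at the level of cycles. Additivity under disjoint union and the direct sum relation are immediate from the definition. Vector bundle modification reduces, after standard manipulations, to the statement that the Bott element has analytic index $1$, which is a formal consequence of the multiplicative properties of the Kasparov product in $KK^{ban}$ and the fact that the Dirac operator along the fiber is modelled by the Bott class. The main subtlety, and what I expect to be the principal obstacle, is bordism invariance: given $(M,\mathcal{E}_B,\varphi)=\partial(W,\mathcal{F}_B,\Phi)$, one must show that the class of the twisted Dirac operator on $M$ vanishes in $KK^{ban}(C_0(X),B)$. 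In the $C^*$-setting this is a standard mapping-cone/cobordism argument; in our setting I would exploit the hypothesis that $KK^{ban}(-,B)$ is half-exact to obtain a six-term exact sequence associated to
\[0\to C_0(W^{\circ})\to C_0(W)\to C(\partial W)\to 0,\]
together with homotopy invariance of $KK^{ban}$, to conclude that the image of the relative cycle determined by $(W,\mathcal{F}_B,\Phi)$ in $KK^{ban}(C(\partial W),B)$ is precisely the analytic index of $(M,\mathcal{E}_B)$, and that this image lifts to $KK^{ban}(C_0(W),B)$ and is hence killed by $\varphi_*$ after pushing forward to $X$ (using that $\varphi$ extends continuously only on the relative structure).

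Next I would check naturality: for a continuous map $g:X\to Y$ one has $\ind_{Y,B}^a\circ g_*=g_*\circ \ind_{X,B}^a$, directly from the cycle-level formula, so $\ind^{a}_{-,B}$ is a natural transformation of $\Z/2$-graded abelian group valued functors on the category of finite CW-complexes. Then I would verify that both theories are generalized homology theories in the Eilenberg--Steenrod sense on finite CW-complexes. For $K_*^{geo}(-;B)$ this is Theorem \ref{kgeohomtheory}. For $KK^{ban}(C_0(-),B)$, the requisite homotopy invariance is part of the foundational properties of $KK^{ban}$, excision follows from naturality plus half-exactness applied to the split-short exact sequence $0\to C_0(U)\to C_0(X)\to C_0(X\setminus U)\to 0$, and the long exact sequence of a pair is obtained from the mapping cone (Puppe) construction together with half-exactness. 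Additivity on finite CW-complexes (finite disjoint unions) is automatic.

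With both sides established as generalized homology theories on finite CW-complexes and $\ind^a$ a natural transformation between them, the final step is to show that the transformation is an isomorphism on a point. Here $KK^{ban}(C_0(pt),B)=KK^{ban}(\C,B)\cong K_*(B)$, and by Lemma \ref{analyticassemblyc} the geometric side $K_*^{geo}(pt;B)$ is also isomorphic to $K_*(B)$ via the topological index $\ind_B^t$. The content that has to be checked is that $\ind_{pt,B}^a=\ind_B^t$ after these identifications; this is the Atiyah--Singer-type equality between the analytic and topological index at a point, which on a cycle $(M,\mathcal{E}_B)$ reduces to the classical statement that the symbol-class pushforward agrees with the Kasparov product with the Dirac element, and is proved by embedding $M\hookrightarrow \R^{2N}$ and comparing the $K$-theoretic wrong-way map with Bott periodicity, exactly as in the proof of Lemma \ref{analyticassemblyc}.

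Finally, a standard Mayer--Vietoris/CW-induction argument (the Eilenberg--Steenrod uniqueness theorem in its form for generalized homology on finite CW-complexes) promotes the isomorphism at a point to an isomorphism on all finite CW-complexes. Concretely, one inducts on the number of cells, using the long exact sequences of the pairs $(X^{(n)},X^{(n-1)})$ together with naturality and the five lemma; the base case is the isomorphism on a point established above. The delicate step is really the bordism-invariance portion of well-definedness, since half-exactness of $KK^{ban}(-,B)$ is the only substitute we have for the powerful exactness properties enjoyed by $KK$ in the $C^*$-world.
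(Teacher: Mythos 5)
Your proposal is correct and follows essentially the same route the paper takes (the paper leaves most of the details implicit, asserting that half-exactness of $KK^{ban}(-,B)$ makes $\ind^a$ respect bordism, and then declaring the corollary to follow from Theorem \ref{kgeohomtheory} — i.e.\ from the fact that both sides are generalized homology theories and agree at a point via Lemma \ref{analyticassemblyc}, after which Eilenberg--Steenrod uniqueness on finite CW-complexes finishes the job). One small slip: the extension $0\to C_0(U)\to C_0(X)\to C_0(X\setminus U)\to 0$ you invoke for excision is a short exact sequence but not in general a \emph{split} one; half-exactness plus the mapping-cone/Puppe machinery is what you actually need there, as you already correctly invoke for the long exact sequence of a pair.
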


\begin{remark}
If $B$ is a $C^*$-algebra making $KK^{ban}(-,B)$ half-exact, Corollary \ref{kkbankkgeo} implies that 
$$KK(C_0(X),B)\cong KK^{ban}(C_0(X),B)$$
for any finite $CW$-complex $X$. The isomorphism is the natural, forgetful mapping sending a Kasparov cycle to a $KK^{ban}$-cycle.
\end{remark}

\paragraph{\textbf{Acknowledgements}}
The authors wish to express their gratitude towards Heath Emerson, Nigel Higson, and Thomas Schick for discussions. The first listed author was supported by an NSERC postdoctoral fellowship. Both authors thank the Courant Centre of G\"ottingen, the Leibniz Universit\"at Hannover and the Graduiertenkolleg 1463 (\emph{Analysis, Geometry and String Theory}) for facilitating this collaboration.

\end{document}